\documentclass[a4paper,11 pt,reqno]{amsart}
\usepackage[latin1]{inputenc}
\usepackage{amsmath,amsthm,amssymb,amsfonts,amscd,amstext}
\usepackage{latexsym}
\usepackage{color}
\usepackage{xcolor}
\usepackage{graphicx}
\usepackage{mathrsfs}
\usepackage{tikz}
\usepackage{comment}
\usepackage{enumitem}
\usepackage[left=2.3cm,right=2.3cm,top=2.5cm,bottom=2.5cm]{geometry}
\makeatletter
\newcommand{\myitem}[1]{%
	\item[#1]\protected@edef\@currentlabel{#1}%
}
\makeatother
\usepackage[hyperpageref]{backref}
\usepackage[
colorlinks=true,
linkcolor=blue,
citecolor=blue,
urlcolor=green,
backref=page
]{hyperref}

\usepackage[
colorlinks=true,
linkcolor=blue,
citecolor=blue,
urlcolor=green,
backref=page
]{hyperref}

\renewcommand*{\backref}[1]{}
\renewcommand*{\backrefalt}[4]{%
	\ifcase #1 %
	\or #2%
	\else #2%
	\fi
}

\allowdisplaybreaks
\newtheorem{theorem}{Theorem}[section]
\newtheorem{remark}{Remark}
\newtheorem{lemma}[theorem]{Lemma}
\newtheorem{proposition}[theorem]{Proposition}
\newtheorem{corollary}[theorem]{Corollary}
\newtheorem{definition}[theorem]{Definition}

\numberwithin{equation}{section}

\newcommand{\eps}{\varepsilon}

\newcommand{\R}{\mathbb{R}}

\title[Normalized solutions for a mixed Choquard equation]
{Normalized solutions to an exponential growth Choquard equation driven by mixed local-nonlocal operator in $\mathbb{R}^2$}

\author[Nidhi Nidhi]{Nidhi Nidhi}
\address[N.~Nidhi]{Department of Mathematics, Indian Institute of Technology Delhi, 110016, India}
\email{nidhi.kaushik2809@gmail.com}

\author[L.~Sharma]{L.~Sharma}
\address[L.~Sharma]{Department of Mathematics, Indian Institute of Technology Delhi, 110016, India}
\email{slovelesh.96@gmail.com}

\author[K.~Sreenadh]{K.~Sreenadh$^{*}$}
\address[K.~Sreenadh]{Department of Mathematics, Indian Institute of Technology Delhi, 110016, India}
\email{sreenadh@maths.iitd.ac.in}

\thanks{$^{*}$Corresponding author.}

\subjclass{35J47; 35B33; 35J50}
\keywords{Normalized solutions; Nonlinear Schr\"odinger equations; Choquard nonlinearity; Critical exponential growth; Trudinger-Moser inequality}

\begin{document}
	
	\begin{abstract}
		\noindent In this article, we study the existence of normalized solutions to the following mixed nonlinear Choquard equation with exponential growth
		\begin{align*}
			\left\{
			\begin{aligned}
				\mathcal{L}u+\lambda u \; &=\; \Lambda(I_{\alpha}\ast F(u))F'(u)  \text{ in }\mathbb{R}^{2},\\
				\int_{\mathbb{R}^{2}}|u|^{2}\,dx \; &=\; a^{2},
			\end{aligned}
			\right.
		\end{align*}
		where $\mathcal{L}= -\Delta+(-\Delta)^s$, $0<s<1$, $a>0$,  $I_{\alpha}$ is the Riesz potential of order $\alpha\in (0,2)$, $\Lambda>0$ is a parameter and $\lambda\in \mathbb{R}$ appears as a Lagrange multiplier. Here, the nonlinearity $F$ has exponential growth in $\mathbb{R}^{2}$. Using variational methods, we prove the existence of normalized solution in the Poho\v{z}aev manifold. Moreover, we discuss the regularity result and the construction of the Poho\v{z}aev identity, essential for the existence. 
		\keywords{Normalized solutions; Nonlinear Schr\"odinger equations; Choquard nonlinearity; Critical exponential growth; Trudinger-Moser inequality}
	\end{abstract}
	
	\maketitle
	
	\section{Introduction}\label{introduction}
	\noindent The aim of this article is to analyze a mixed nonlinear Choquard equation with exponential growth, focusing on the existence of normalized solutions
	\begin{equation}\label{aa}
		\left\{
		\begin{aligned}
			\mathcal{L}u+\lambda u \; &=\; \Lambda(I_{\alpha}\ast F(u))F'(u) \text{ in }\mathbb{R}^{2},\\
			\int_{\mathbb{R}^{2}}|u|^{2}\,dx \; &=\; a^{2},
		\end{aligned}
		\right.\tag{$P_{\lambda}$}
	\end{equation}
	where $a>0$ is a constant, $\Lambda>0$ is a parameter, $I_{\alpha}$ is the Riesz potential of order $\alpha\in (0,2)$ defined as:
	\begin{eqnarray*}
		\begin{aligned}
			I_{\alpha}(x)=\frac{\Gamma(\frac{2-\alpha}{2})}{\Gamma(\frac{\alpha}{2})2^{\alpha}\pi|x|^{2-\alpha}}:=\frac{A_{\alpha}}{|x|^{2-\alpha}}, \quad x\in \mathbb{R}^{2}\backslash\{0\},
		\end{aligned}
	\end{eqnarray*}
	the mixed diffusion type operator $\mathcal{L}=-\Delta+(-\Delta)^s$ with
	$$ (-\Delta)^su(x)=C(2,s)\text{P.V.}\int_{\mathbb{R}^2}\frac{u(x)-u(y)}{|x-y|^{2+2s}}dy, \text{ for } s\in (0,1),$$
	here $C(2, s)$ is the normalizing constant given by 
	$$C(2,s)=\left(\int_{\mathbb{R}^2}\frac{1-\cos(x)}{|x|^{2+2s}}dx\right)^{-1},$$
	and P.V. is the abbreviation for principal value. For the sake of our convenience, we consider $C(2,s)=1$. Denoting $f=F'$, we assume that $f$ is a continuous non decreasing function that satisfies the following conditions:
	
	\begin{enumerate}
		\myitem{f1)}\label{f1} $\displaystyle \lim_{t \to 0} \frac{f(t)}{|t|^\tau} = 0$, ~~{for some}, $\tau > 3;$
		\myitem{f2)}\label{f2} there exists $\theta>2+\frac{\alpha}{2}>1$ such that $f(t)t\geq \theta F(t)> 0$, for all $t\neq 0$;
		\myitem{f3)}\label{f3} there exist constants 
		$\tilde{\sigma}>2+\frac{\alpha}{2}$ and $\mu>0$ such that
		\[
		F(t)\geq \mu\, |t|^{\tilde{\sigma}}, \quad \text{for all } t \in \mathbb{R};
		\]
		\myitem{f4)}\label{f4} define $\tilde{F}(t)=f(t)t-\left(\frac{2+\alpha}{2}\right)F(t)$ for $t\in\mathbb{R}$. Then
		\(
		\frac{\tilde{F}(t)}{t^{2+\frac{\alpha}{2}}}
		\)
		is nondecreasing  on $(0,\infty)$ and nonincreasing on $(-\infty, 0)$.
	\end{enumerate}	
	In the subcritical case, we further assume that $f$ satisfies:
	\begin{enumerate}
		\myitem{f5)}\label{f5} $f$ has exponential subcritical growth, that is, for every $\gamma>0$,
		\begin{eqnarray*}
			\begin{aligned}
				\lim_{|t|\rightarrow +\infty}\frac{|f(t)|}{e^{\gamma|t|^{2}}}=0.
			\end{aligned}
		\end{eqnarray*}
	\end{enumerate}
	We also consider the case of exponential critical growth in $\mathbb{R}^{2}$. It is well known that in dimension two, the natural growth condition is governed by the Trudinger-Moser inequality. In the critical case, we assume that
	\begin{enumerate}
		\myitem{f6)}\label{f6} $f$ has $\gamma_{0}$ exponential critical growth at $+\infty$ in the sense of the Trudinger-Moser inequality, that is, there exists $\gamma_{0}>0$ such that
		\begin{align*}
			\lim_{|t|\to +\infty}\frac{|f(t)|}{e^{\gamma |t|^{2}}}=
			\begin{cases}
				0, & \text{for all } \gamma > \gamma_{0},\\
				+\infty, & \text{for all } \gamma < \gamma_{0}.
			\end{cases}
		\end{align*}
	\end{enumerate}
	\begin{remark}{\cite{REF}}
		From \ref{f1}-\ref{f2}, if $f$ has subcritical exponential growth at $+\infty$, 
		then for fixed $q>2;\tau>3$, and for any $\varepsilon>0;\gamma>0$, 
		there exists a constant $\kappa_{\varepsilon}= \kappa_{\varepsilon} (q,\gamma,\varepsilon)>0$, such that
		\begin{equation}\label{snonb}
			| f(t)|\leq\varepsilon|t|^{\tau}+\kappa_{\varepsilon}|t|^{q-1}\bigl(e^{\gamma |t|^{2}}-1\bigr), 
			\quad \forall\, t\in \mathbb{R},
		\end{equation}
		and
		\begin{equation}\label{snona}
			|F(t)|\leq\varepsilon|t|^{\tau+1}+\kappa_{\varepsilon}|t|^{q}\bigl(e^{\gamma |t|^{2}}-1\bigr), 
			\quad \forall\, t\in \mathbb{R}.
		\end{equation}
		Similarly, if $f$ has critical exponential growth at $+\infty$ with critical exponent $\gamma_{0}$, 
		then for fixed $q>2;\tau>3$, and for any $\varepsilon>0;\gamma>\gamma_{0}$, 
		there exists a constant $\kappa_{\varepsilon}= \kappa_{\varepsilon}(q,\gamma,\varepsilon)>0$,  such that
		\begin{equation}\label{nonb}
			| f(t)|\leq\varepsilon|t|^{\tau}+\kappa_{\varepsilon}|t|^{q-1}\bigl(e^{\gamma |t|^{2}}-1\bigr), 
			\quad \forall\, t\in \mathbb{R},
		\end{equation}
		and
		\begin{equation}\label{nona}
			|F(t)|\leq\varepsilon|t|^{\tau+1}+\kappa_{\varepsilon}|t|^{q}\bigl(e^{\gamma |t|^{2}}-1\bigr), 
			\quad \forall\, t\in \mathbb{R}.
		\end{equation}
	\end{remark}
	Throughout this paper, we assume that if the nonlinearity $f$ satisfies either {\ref{f5} or \ref{f6}}, that is, $f$ has exponential subcritical or critical growth. This distinction is closely related to the Trudinger-Moser inequality; see \cite{Moser,Trudinger}.\\
	{\noindent Equations involving nonlinearity of the form $(I_{\alpha}*F(u))F'(u)$ are called {\it Choquard equation}, as in 1976, Choquard, at the Symposium on Coulomb Systems  
		utilised the energy functional associated to equation 
		\begin{equation}
			\left\{ \begin{array}{rl}   	
				& 	-\Delta u  +  u = (I_2*|u|^2)u\;\;\text{in } \mathbb{R}^3,\\
				&  	u \in H^1(\mathbb{R}^3),
			\end{array}
			\right.
		\end{equation}
		to examine a viable approximation to Hartree-Fock theory for a one-component plasma (see \cite{lieb1977existence}). The equation has various other applications in quantum physics, for instance, it is used to characterise an electron confined within its own vacancy, see \cite{penrose1996gravity} and related sources. Several works have ever since conducted research on the existence, multiplicity, and qualitative characteristics of the solution to the Choquard-type equations
		as detailed in \cite{filippucci2020singular, Moroz2013groundstates, liu2022another}. 
		
		\noindent The study of mixed operators of the type $\mathcal{L}$ as in the problem \eqref{aa} is motivated by several applications where such a kind of operators are naturally generated, including the theory of optimal searching, biomathematics, and animal foraging, for which we refer to \cite{dipierro2022non, dipierro2021description, MR3771424}. In applied sciences, they are used for investigating the changes in physical phenomena that have both local and nonlocal effects. For instance, they are present in bi-modal power law distribution systems, see \cite{MR4225516}. Furthermore, they are present in models that are derived from the combination of two distinct scaled stochastic processes.
		\noindent The aim of our work is to find the standing wave solution to Nonlinear Schr$\ddot{\text{o}}$dinger Equation. A standing wave solution for a nonlinear Schr$\ddot{\text{o}}$dinger (NLS) equation driven by mixed local and nonlocal operators is given as follows:
		\begin{equation}\label{NLS_eq}
			i\frac{\partial \psi}{\partial t}=-\Delta \psi +(-\Delta)^s\psi -\mu (I_{\alpha}*F(\psi))f(\psi),
		\end{equation}
		is of the form $\psi(x,t)=e^{-i\lambda t}u(x)$, where $\lambda\in \mathbb{R}$ and $u\in H^1(\mathbb{R}^N)$ solves:
		\begin{equation}\label{1.2}
			-\Delta u +(-\Delta)^su=\lambda u +\mu (I_{\alpha}*F(u))f(u)  \text{ in }\mathbb{R}^N.
		\end{equation}
		While addressing solutions to \eqref{1.2}, there exist two schools of thought. The initial approach involves fixing a $\lambda \in \mathbb{R}$ and thereafter looking for the critical points of the associated functional; this method has already been extensively employed, see for instance \cite{biagi2025brezis, Anthal2026Ground, Moroz2013groundstates} and references therein. The other method is to fix the $L^2$-norm and look for the solution to the following constrained problem:
		\begin{equation*}
			\left\{
			\begin{array}{rcl}
				-\Delta	u+(-\Delta)^su & = & \lambda u +\mu(I_{\alpha}*F(u))F'(u) \text{ in } \mathbb{R}^N,\\
				| u |_2^2 & = & \tau^2,
			\end{array}
			\right.
		\end{equation*}
		called the {\it normalized solution} or the solution with prescribed mass. The physical motivation to study this problem arises from the fact that its solution provides stationary states of a nonlinear Schr$\ddot{\text{o}}$dinger equation with a prescribed $L^2$-norm, which represents an entity that satisfies dynamic conditions and has a fixed mass. The study of normalized solutions can be dated back to the pioneering work in \cite{Jeanjean1997}, where Jeanjean obtained the existence of radial solutions for
		\begin{equation}\label{Norm_sol}
			\left\{ \begin{array}{rl}   	
				-\Delta u & =  \lambda u +g(u)\;\;\text{in } \mathbb{R}^N,\\
				| u |_2^2 & =  c,
			\end{array}
			\right.
		\end{equation} 
		under some assumptions on $g$. In \cite{bartsch2012normalized}, the existence of infinite solutions to \eqref{Norm_sol} under same assumptions has been shown.
		Further in \cite{noris2015existence}, the normalized solutions are discussed and described in the case of bounded domains with Dirichlet boundary conditions. Considering the domain to be the unit ball and $g(x)=|x|^{p-1}x$, the existence of normalized solutions has been seen for $p$ lying in $(1,1+\frac{4}{N}),\; (1+\frac{4}{N}, 2^*-1)$ and $p=1+\frac{4}{N}$ under some conditions on $c$. 
		Moreover, the problem in general bounded domains has been dealt by the authors in \cite{pierotti2017normalized}. Recently such problems involving fixed mass constraints have attracted many researchers; see, for instance, the work of \cite{gou2018multiple,bartsch2016normalized,bartsch2018normalized, noris2019normalized,Giacomoni2025Normalized, Nidhi2025Normalized_Asymp}, where authors studied the existence, multiplicity and regularity of normalized solutions for some nonlinear Schr$\ddot{\text{o}}$dinger equations with several local and nonlocal nonlinearities. Normalized solutions are also taken into consideration in the study of quadratic ergodic mean field games systems, see in particular \cite{pellacci2021normalized}.\\
		\noindent Almost all of the above-discussed studies deal in higher dimensions, that is, for $N>2$. For $N=2$, the critical exponent turns out to be $\infty$, which complicates the problem further and, at the same time, makes it more intriguing for researchers to work on, as it suggests that traditional methods of analysis may break down and require new approaches to comprehend the behaviour of the system. Recently, some authors have worked in order to tackle this issue; for instance, see the work of Deng and Yu in \cite{Deng2022Normalized}, where they studied the problem:
		\begin{equation*}
			\left\{
			\begin{array}{rl}
				-\Delta u +\lambda u & =(I_{\alpha}*F(u))f(u) \text{ in } \mathbb{R}^2;\\
				|u|_2 &= a, 
			\end{array}
			\right.
		\end{equation*}
		and proved the existence of the normalized solution as well as the ground state solution. Further, Shen and Squassina in \cite{Shen2025Concentrating} discussed the existence and concentrating behaviour of the normalized solution to a quite general problem. Moreover, the normalized solutions for some Kirchhoff-Choquard equations have also been studied; for instance, see the works in \cite{Yi2026Normalized}, \cite{Jin2026Solutions} and references therein. Motivated by the above well-established literature, we would like to study the existence and regularity of normalized solutions to an exponential growth The Choquard equation driven by a mixed local-nonlocal operator in $\mathbb{R}^2$.
		
		\noindent	Considering the solution space to be $H^1(\mathbb{R}^2)$ and the functional settings as discussed in Section 2, we begin with highlighting our main results:
		\begin{remark}
			We note that functions satisfying \ref{f1}-\ref{f4} and \ref{f5} are available in the literature; see \cite{REF}.
		\end{remark}
		
		\noindent	Under the above assumptions \ref{f1}-\ref{f6}, our first and second results are the following.
		\begin{theorem}\label{T1}
			Suppose that $f$ satisfies \ref{f1}-\ref{f2} and \ref{f5}. Then, for every $a>0$, the problem \eqref{aa} admits a normalized weak solution $(u,\lambda)\in H^1(\mathbb{R}^2)\times \mathbb{R}$ with $\lambda>0$ for sufficiently large $\Lambda$.
			Moreover, if \ref{f4} holds, then this solution can be chosen as a normalized ground state solution of \eqref{aa}.
		\end{theorem}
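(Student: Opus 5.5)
We will work with the energy functional
\[
J(u)=\frac12\|\nabla u\|_2^2+\frac12[u]_s^2-\frac{\Lambda}{2}\int_{\mathbb{R}^2}(I_\alpha\ast F(u))F(u)\,dx ,
\]
where $[u]_s^2=\iint\frac{|u(x)-u(y)|^2}{|x-y|^{2+2s}}\,dx\,dy$, restricted to $S(a)=\{u\in H^1(\mathbb{R}^2):\|u\|_2=a\}$. The functional is well defined and $C^1$ by the Hardy--Littlewood--Sobolev inequality combined with \eqref{snona} and the Trudinger--Moser inequality.

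\textbf{Scaling and the Poho\v{z}aev manifold.} Since the two operators scale differently, we use the $L^2$-preserving dilation $t\star u(x)=t\,u(tx)$. Then
\[
\|\nabla(t\star u)\|_2^2=t^2\|\nabla u\|_2^2,\qquad [t\star u]_s^2=t^{2s}[u]_s^2,
\]
and the Choquard term equals $t^{-(2+\alpha)}\int(I_\alpha\ast F(tu))F(tu)$. The Poho\v{z}aev functional is
\[
P(u)=\|\nabla u\|_2^2+s[u]_s^2-\Lambda\int(I_\alpha\ast F(u))\tilde F(u),
\]
with $\tilde F$ as in \ref{f4}, obtained as $\frac{d}{dt}J(t\star u)|_{t=1}$. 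We set $\mathcal P(a)=\{u\in S(a):P(u)=0\}$; the paper's Poho\v{z}aev identity (Section on regularity) guarantees that every solution lies on it.

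\textbf{Mountain pass geometry.} By \ref{f1}, \eqref{snona}, HLS and Trudinger--Moser (for $\|\nabla u\|_2$ small with $a$ fixed), the nonlinear term is $o(\|\nabla u\|_2^2)$ as $\|\nabla u\|_2\to0$, since the exponents $\tau+1>4$ are superquadratic after the $t^{-(2+\alpha)}$ factor. Hence $J>0$ and $P>0$ on $S(a)\cap\{\|\nabla u\|_2\le\rho\}$ for small $\rho$. By \ref{f3}, $J(t\star u)\to-\infty$ as $t\to\infty$, because $\tilde\sigma>2+\alpha/2$ gives growth of order $t^{2\tilde\sigma-2-\alpha}$, which beats $t^2$.

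Following Jeanjean, we apply the minimax to the augmented functional $\tilde J(u,t)=J(t\star u)$ on $S(a)\times\mathbb{R}_+$, restricted to radial functions $H^1_r$. This yields a Palais--Smale sequence $(u_n)$ at level $c_a>0$ with $P(u_n)\to0$. The largeness of $\Lambda$ enters here: using \ref{f3}, $c_a\le\max_t J(t\star u_0)\to0$ as $\Lambda\to\infty$. In the subcritical case \ref{f5} we do not really need this for compactness, but we use it to keep $\|\nabla u_n\|_2$ small, which controls the exponential terms, and to prove $\lambda>0$.

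\textbf{Compactness; main obstacle.} From $J(u_n)-\frac{1}{2+\alpha/2}P(u_n)$ together with \ref{f2} ($\theta>2+\alpha/2$), we get that $(u_n)$ is bounded in $H^1_r$. The Lagrange multipliers are $\lambda_n=-a^{-2}\bigl(\|\nabla u_n\|^2+[u_n]_s^2-\Lambda\int(I_\alpha\ast F(u_n))f(u_n)u_n\bigr)$, and they are bounded. Let $u_n\rightharpoonup u$. By compactness of $H^1_r\hookrightarrow L^p$ for $p>2$ and the subcritical growth \ref{f5}, the Choquard terms converge via a Strauss-type lemma.

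We show $u\ne0$: otherwise $\int(I_\alpha\ast F(u_n))\tilde F(u_n)\to0$, so $P(u_n)\to0$ forces $\|\nabla u_n\|_2\to0$, hence $J(u_n)\to0\ne c_a$.

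The key step is $\lambda>0$. Using $P(u)=0$ for the limit,
\[
\lambda a^2\ \text{(or } \lambda\|u\|_2^2)=\Lambda\int(I_\alpha\ast F)\Bigl(f u-\tilde F\Bigr)-(1-s)[u]_s^2 .
\]
Since $fu-\tilde F=\frac{2+\alpha}{2}F$, this becomes $\frac{2+\alpha}{2}\Lambda\int(I_\alpha\ast F)F-(1-s)[u]_s^2$. Positivity of this quantity is not automatic; I would prove it for large $\Lambda$ by bounding $[u]_s^2\le C\|u\|_2^{2-2s}\|\nabla u\|_2^{2s}$ and comparing with the Choquard term through $P=0$. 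This is where the mixed operator genuinely interferes, and I expect it to be the hardest estimate.

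Once $\lambda>0$ is established, testing $(J'(u_n)-J'(u)+\lambda(u_n-u))$ against $u_n-u$ gives strong $H^1$ convergence, so $\|u\|_2=a$.

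\textbf{Ground state under \ref{f4}.} Assumption \ref{f4} implies that for each $u\in S(a)$ the fibre map $t\mapsto J(t\star u)$ has a unique critical point $t_u$, which is a strict maximum. The $s$-term is handled because $t^{2s}$ is sublinear relative to the monotonicity exponent. Consequently $c_a=\inf_{\mathcal P(a)}J$, and the solution obtained above attains this infimum, so it is a normalized ground state.
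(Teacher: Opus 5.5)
\textbf{Verdict: the proposal has a genuine gap at the step $\lambda>0$, and the paper does not close that step either.}

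\textbf{What matches.} Your architecture is the paper's:
the fibre map $t\star u$ (the paper's $H(u,\sigma)$ with $t=e^{\sigma}$);
Jeanjean's augmented minimax on $\mathcal{S}_r(a)$, giving a Palais--Smale sequence with $P(u_n)\to0$;
boundedness from \ref{f2};
radial compactness for the Choquard terms;
and $c_a=\gamma(a)=m(a)$ under \ref{f4} via uniqueness of the fibre maximum.
Your non-vanishing argument is cleaner than the paper's, which detours through $\lambda_n$: if $u=0$, then $P(u_n)\to0$ forces $|\nabla u_n|_2^2+[u_n]_s^2\to0$, hence $J(u_n)\to0$.

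\textbf{Why $\lambda>0$ is load-bearing.} Your route (testing the difference of the equations with $u_n-u$) and the paper's (the identity $\lambda a^2=\lambda|u|_2^2$ from Lemma \ref{lamd} and the Poho\v{z}aev identity of the limit) both give $|u|_2=a$ and strong convergence only if $\lambda>0$. What is needed is the lower bound $\frac{2+\alpha}{2}\Lambda\int_{\mathbb{R}^2}(I_\alpha*F(u))F(u)\,dx>(1-s)[u]_s^2$ (your normalization of $J$). The tools you name cannot produce it:
\begin{itemize}
\item The relation $P(u)=0$ controls $\Lambda\int(I_\alpha*F(u))\tilde F(u)$. Hypothesis \ref{f2} only gives $\tilde F\ge(\theta-\frac{2+\alpha}{2})F$, which is an \emph{upper} bound on $\int(I_\alpha*F(u))F(u)$.
\item The reverse inequality $f(t)t\le CF(t)$ does not follow from \ref{f1}, \ref{f2}, \ref{f5}; for $F(t)=t^6e^{|t|^{3/2}}$ one has $f(t)t/F(t)\to\infty$.
\item The interpolation $[u]_s^2\le Ca^{2-2s}|\nabla u|_2^{2s}$ points the wrong way. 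Combined with $P=0$ it gives at best $[u]_s^2\lesssim a^{2-2s}\bigl(\Lambda\int(I_\alpha*F(u))\tilde F(u)\bigr)^{s}$. Since $s<1$ and these quantities tend to $0$ in the large-$\Lambda$ regime you deliberately enter, this bound is far larger than the Choquard term it must be compared with.
\end{itemize}

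\textbf{Large $\Lambda$ does not help by itself.} Combining $J(u_n)\to c_a$, $P(u_n)\to0$ and \ref{f2} exactly as in your boundedness step gives, in your normalization,
\[
\Bigl(\theta-2-\frac{\alpha}{2}\Bigr)\Lambda\int_{\mathbb{R}^2}(I_\alpha*F(u_n))F(u_n)\,dx+(1-s)[u_n]_s^2\le 2c_a+o_n(1).
\]
Feeding this into $\lambda_na^2=\frac{2+\alpha}{2}\Lambda\int_{\mathbb{R}^2}(I_\alpha*F(u_n))F(u_n)\,dx-(1-s)[u_n]_s^2+o_n(1)$ yields only $-2c_a\le\lambda a^2\le\frac{(2+\alpha)c_a}{\theta-2-\alpha/2}$. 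Since $c_a\to0$ as $\Lambda\to\infty$, large $\Lambda$ merely squeezes $\lambda$ towards $0$. The sign is decided by the ratio of the two competing terms, and nothing has been proved about that ratio. In the purely local case the term $-(1-s)[u]_s^2$ is absent and the identity gives $\lambda>0$ at once; this is exactly where the mixed operator bites.

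\textbf{Comparison with the paper.} The paper's proof of Theorem \ref{T1} reaches the same identity (Lemma \ref{lamd}) and then simply asserts $\lambda>0$ ``for sufficiently large $\Lambda$''. In view of the bound above, that assertion is not justified. So you cannot close the gap by appealing to the paper; a genuinely new ingredient is required.

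\textbf{A smaller, fixable point.} \ref{f3} is not among the hypotheses of Theorem \ref{T1}. However, \ref{f2} alone gives what you use it for: $F(\zeta t)\ge\zeta^{\theta}F(t)$ for $\zeta\ge1$, with $2\theta-2-\alpha>2$, already yields $J(t\star u)\to-\infty$ and $c_a\to0$ as $\Lambda\to\infty$.
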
 
		
		\begin{theorem}\label{T2}
			Assume that $f$ satisfies \ref{f1}-\ref{f3} and \ref{f6}. If 
			\[
			a^{2} <\frac{(2+\alpha)\pi}{\gamma_{0}},
			\]
			then there exists $\mu^*=\mu^*(a)>0$ such that, for every $\mu\geq \mu^*$, the problem \eqref{aa} admits a normalized weak solution $(u, \lambda)\in  H^1(\R^2)\times \R^+$. Moreover, if \ref{f4} also holds, then this solution can be chosen as a normalized ground state solution of \eqref{aa}.
		\end{theorem}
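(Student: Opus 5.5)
We argue variationally on the constraint $S(a)=\{u\in H^1(\mathbb{R}^2): |u|_2^2=a^2\}$ for the functional
\[
J(u)=\frac12|\nabla u|_2^2+\frac12[u]_s^2-\frac{\Lambda}{2}\int_{\mathbb{R}^2}(I_\alpha\ast F(u))F(u)\,dx ,
\]
where the parameter called $\mu$ in Theorem \ref{T2} plays the role of $\Lambda$.

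We work on the Poho\v{z}aev set $\mathcal{P}(a)=\{u\in S(a):P(u)=0\}$. The functional $P$ comes from the mass-preserving dilation $t\star u(x)=t\,u(tx)$. Under this dilation $|\nabla(t\star u)|_2^2=t^2|\nabla u|_2^2$ and $[t\star u]_s^2=t^{2s}[u]_s^2$, while the Choquard term picks up the factor $t^{-(2+\alpha)}$ after the change of variables. Differentiating $J(t\star u)$ at $t=1$ gives
\[
P(u)=|\nabla u|_2^2+s[u]_s^2-\frac{\Lambda}{2}\int_{\mathbb{R}^2}(I_\alpha\ast F(u))\bigl(f(u)u\cdot 2-(2+\alpha)F(u)\bigr)\,dx\cdot\frac{1}{1},
\]
up to normalization; this is the combination $\tilde F$ of \ref{f4}.

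\textbf{Step 1: fibering map and geometry.} For $u\in S(a)$ the map $t\mapsto J(t\star u)$ is positive for small $t$. This uses \eqref{nona} with $\gamma$ close to $\gamma_0$, the Hardy--Littlewood--Sobolev inequality, and the Trudinger--Moser inequality. The mass hypothesis $a^2<(2+\alpha)\pi/\gamma_0$ is exactly what makes $\int(e^{\gamma\frac{4}{2+\alpha}|t\star u|^2}-1)$ uniformly bounded when $|\nabla(t\star u)|_2$ is small. The map tends to $-\infty$ as $t\to\infty$ by \ref{f2}--\ref{f3}, since $\tilde\sigma>2+\alpha/2$. Hence we have a mountain-pass geometry on $S(a)$, with level
\[
m(a)=\inf_{\gamma\in\Gamma}\max_{t\in[0,1]}J(\gamma(t))>0.
\]
Under \ref{f4} the fiber has a unique maximum $t_u$, $\mathcal{P}(a)$ is a natural constraint, and $m(a)=\inf_{\mathcal{P}(a)}J$. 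This gives the ground-state statement once the level is attained.

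\textbf{Step 2: Palais--Smale sequence and estimates.} Following Jeanjean, we apply the mountain pass on the augmented functional $\tilde J(u,t)=J(t\star u)$. This yields a Palais--Smale sequence $(u_n)\subset S(a)$ at level $m(a)$ with $P(u_n)\to0$; radial symmetry can be taken if convenient, using Schwarz symmetrization for the local part and P\'olya--Szeg\H{o} for $[\cdot]_s$. Combining $J(u_n)\to m(a)$ with $P(u_n)\to0$ and \ref{f2} ($\theta>2+\alpha/2$) bounds $|\nabla u_n|_2^2+[u_n]_s^2$ by $C\,m(a)$. Using \ref{f3}, we test the path along a fixed $u_0\in S(a)$ and obtain $m(a)\le C\Lambda^{-2/(\tilde\sigma-2-\alpha/2)}$ or similar. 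So $m(a)\to0$ as $\Lambda\to\infty$. For $\Lambda\ge\mu^*$ we therefore have $\sup_n|\nabla u_n|_2^2$ small, say below $(1-\epsilon)\frac{(2+\alpha)\pi}{\gamma_0}\cdot\frac{1}{\cdots}-a^2$ as needed. Trudinger--Moser (in the $H^1$-norm form of Cao / Ruf) then gives a uniform bound for $\int(e^{p\gamma u_n^2}-1)$ with some $p>1$, and hence uniform integrability of $(I_\alpha\ast F(u_n))f(u_n)u_n$.

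\textbf{Step 3: compactness and sign of $\lambda$.} Up to translations, $u_n\rightharpoonup u\neq0$; vanishing is excluded by Lions' lemma, because otherwise the Choquard terms tend to $0$, which combined with $P(u_n)\to0$ forces $J(u_n)\to0\ne m(a)$. The Lagrange multipliers are $\lambda_n=-\frac{1}{a^2}\langle J'(u_n),u_n\rangle$ up to sign. Subtracting $P(u_n)$ expresses $\lambda_n a^2$ as $(1-s)[u_n]_s^2$ plus $\Lambda\int(I_\alpha\ast F)\bigl(\tfrac{?}{}\bigr)$, a positive combination by \ref{f2}; this gives $\lambda>0$ in the limit. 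The equal-integrability estimates of Step 2 let us pass to the limit in the nonlinear term via the Vitali convergence theorem, so $u$ solves the equation with $\lambda>0$. Testing against $u_n-u$ and using $\lambda>0$ then gives strong convergence in $H^1$, so $|u|_2=a$. For the ground state, $J(u)\le\liminf J(u_n)$ by Fatou applied to the $\tilde F$-expression, and $u\in\mathcal{P}(a)$ by the Poho\v{z}aev identity, so $J(u)=m(a)$.

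The main obstacle is Step 2: producing a quantitative upper bound on $m(a)$ small enough, for large $\Lambda$, that the gradient norms stay below the Trudinger--Moser threshold. The fractional term $[u]_s^2$ scales differently ($t^{2s}$) from the gradient, so the energy bound must control $|\nabla u_n|_2^2$ alone. Here \ref{f2}'s margin $\theta>2+\alpha/2$ is essential, and careful bookkeeping is needed.
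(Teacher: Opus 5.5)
Steps 1--2 follow the paper closely (Lemmas~\ref{P1_mixed}, \ref{fin-mixed}, \ref{ESTMOUNTPASS-mixed}--\ref{boundd-mixed}). The problem is in Step~3: your argument for $\lambda>0$ does not work.

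In your normalization, testing $J'(u_n)+\lambda_n u_n\to0$ against $u_n$ gives $|\nabla u_n|_2^2+[u_n]_s^2+\lambda_n a^2=\Lambda M_n+o_n(1)$. The condition $P(u_n)=o_n(1)$ reads $\Lambda M_n=|\nabla u_n|_2^2+s[u_n]_s^2+\frac{2+\alpha}{2}\Lambda N_n+o_n(1)$. Here $N_n=\int_{\mathbb{R}^2}(I_\alpha*F(u_n))F(u_n)\,dx$ and $M_n=\int_{\mathbb{R}^2}(I_\alpha*F(u_n))f(u_n)u_n\,dx$. Subtracting,
\[
\lambda_n a^2=\frac{2+\alpha}{2}\,\Lambda N_n-(1-s)[u_n]_s^2+o_n(1).
\]
So the fractional seminorm enters with a \emph{negative} coefficient, not as $+(1-s)[u_n]_s^2$. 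Moreover, \ref{f2} cannot supply a sign. It only gives the lower bound $M_n\ge\theta N_n$, and whatever multiple of $P(u_n)$ you subtract, the coefficients of $M_n$, $|\nabla u_n|_2^2$ and $[u_n]_s^2$ cannot all be nonnegative.

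This is exactly the identity of Lemma~\ref{lamd} (coefficient $\frac{s-1}{2}$ in the paper's normalization). The mixed operator creates a competition between $\Lambda N_n$ and $[u_n]_s^2$ that does not exist for $-\Delta$ alone, and the paper settles it only by appealing to the largeness of $\Lambda$. You need an actual argument that $\frac{2+\alpha}{2}\Lambda N(u)>(1-s)[u]_s^2$ in the limit. This matters: $\lambda>0$ is part of the conclusion $(u,\lambda)\in H^1(\mathbb{R}^2)\times\mathbb{R}^+$, and it is what makes $\lambda|u_n-u|_2^2$ coercive in your final step giving $|u|_2=a$.

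A second issue is that Step~3 mixes two frameworks. If you work in $H^1(\mathbb{R}^2)$ up to translations with Lions' lemma, testing with $u_n-u$ leaves the term $\Lambda\int_{\mathbb{R}^2}\bigl[(I_\alpha*F(u_n))f(u_n)-(I_\alpha*F(u))f(u)\bigr](u_n-u)\,dx$. This need not tend to zero if part of the mass splits off to infinity. Ruling out such dichotomy needs an extra ingredient, for instance strict monotonicity of $a\mapsto m(a)$, which you do not supply.

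The paper avoids this by running Jeanjean's construction directly in $\mathcal{S}_r(a)$ and using symmetric criticality. There, the compact embedding $H^1_{rad}(\mathbb{R}^2)\hookrightarrow L^p(\mathbb{R}^2)$, $p>2$, together with the uniform Trudinger--Moser bound from \eqref{level-mixed}, gives convergence of both Choquard terms (Lemma~\ref{imp}, Corollary~\ref{Conver}). Then $u\neq0$, $P(u)=0$ and strong convergence follow as in the proof of Theorem~\ref{T1}.

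Symmetrizing paths alone does not produce a radial Palais--Smale sequence with $P(u_n)\to0$. You should therefore restrict the whole minimax to $\mathcal{S}_r(a)$ from the start; then translations and Lions' lemma become unnecessary.
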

		
		\noindent	\textbf{Main Novelty and Strategy of the Proofs:}
		Before going into the proofs, we briefly explain the main idea of our approach. 
		The main novelty of the present work lies in the study of normalized solutions 
		for a mixed local-nonlocal Choquard equation with exponential growth in $\mathbb{R}^2$. To the best of our knowledge, this is the first result combining 
		a mixed operator of the form $ \mathcal{L}= -\Delta + (-\Delta)^s$, a nonlocal Choquard-type nonlinearity, and exponential growth in the sense of the Trudinger-Moser 
		inequality under a mass constraint. This setting introduces significant analytical difficulties due to the simultaneous presence of exponential nonlinearity, nonlocal convolution terms, and the lack of compactness in $\mathbb{R}^2$.
		
		\noindent The problem has a variational structure, so normalized solutions of \eqref{aa} can be obtained as critical points of the functional $J$ defined in \eqref{J}, 
		restricted to the $L^2$-sphere $\mathcal{S}(a)$. Since the equation is autonomous, we work in the radial space $H^1_{rad}(\mathbb{R}^2)$, where the embedding into $L^q(\mathbb{R}^2)$ is compact for all $q\in(2,\infty)$. By Palais' principle of symmetric criticality (see \cite{Palais1979}), solutions found in the radial space are also solutions in the whole space $H^1(\mathbb{R}^2)$. Under assumptions \ref{f1} and \ref{f6}, the functional $J$ is well-defined and of class $C^1$, thanks to Proposition \ref{PRa} and the Hardy-Littlewood-Sobolev inequality (see Proposition \ref{mixedTM}). Moreover, the nature of the problem gives rise to a mountain pass geometry (Lemma 
		\ref{P1_mixed}). Following \cite{Jeanjean1997}, this ensures the existence of a Palais-Smale $(PS)$ sequence at the level $\gamma(a)$.
		
		\noindent A key novelty in our approach is the adaptation of the Poho\v{z}aev manifold technique to the mixed local-nonlocal framework with exponential growth, which requires delicate estimates compatible with the Trudinger-Moser setting. A key step to ensure existence is to obtain an upper bound for the mountain 
		pass level.  This estimate allows us to prove the strong convergence of the $(\mathrm{PS})$-sequence to a nontrivial critical point of $J$ on $\mathcal{S}(a)$ (see Section~3). Finally, to show that this solution is a ground state, we use the monotonicity assumption \ref{f4} and prove that along suitable fiber paths the functional attains its maximum at a unique point, see (Lemma \ref{uni-mixed}), belonging to the Poho\v{z}aev manifold $\{\mathcal{P}(a)\}$. Once this structure is established, we compare the mountain pass level $\gamma(a)$ with the least energy level $m(a)$, showing that they coincide and thus characterize the ground state energy.
		
		\noindent	\textbf{The paper is organized as follows.}  In Section~\ref{preliminaries}, we present the
		functional framework, the Trudinger-Moser type estimates, and the
		Poho\v{z}aev identity associated with the problem. Section~\ref{sec3.1} is devoted to
		the minimax construction and the compactness analysis of the corresponding
		Palais-Smale $(PS)$ sequence. More precisely, in Subsection~\ref{Sgeo} we
		establish the mountain-pass geometry and define the minimax level
		\(\gamma(a)\), while in Subsection~\ref{PSa} we study the compactness properties
		of the Palais-Smale sequence at this level. Section~\ref{proofs} contains
		the proofs of the main existence results. The proof of Theorem~\ref{T1}
		deals with the subcritical exponential case, whereas the proof of
		Theorem~\ref{T2} treats the critical exponential case and includes 
		the critical minimax estimates. Finally,
		Section~\ref{regpoho} is devoted to the regularity result and to the derivation of
		the Poho\v{z}aev identity.
		
		\vspace{0.5 cm}
		
		\noindent \textbf{Notation.} Throughout this paper, unless otherwise stated, we use the following notation:
		\begin{enumerate}[label=(\roman*)]
			\item $\|\cdot\|$ denotes the norm for the Sobolev space $H^1(\mathbb{R}^2)$;
			
			\item $|\cdot|_p$ denotes the usual norm in the Lebesgue space $L^{p}(\mathbb{R}^2)$, for $p \in [1,+\infty]$;
			\item \(|u|^a\) denotes the pointwise power
			\(
			|u|^a=|u(x)|^a 
			\),  for \(1\leq a<+\infty\);
			\item $o_n(1)$ denotes a sequence such that $o_n(1)\to 0$ as $n \to +\infty$;
			\item $C, C_1, C_2, \dots$ denote positive constants.

		\end{enumerate}
		\section{{Functional Framework}}\label{preliminaries}

		\noindent In this section, we introduce the variational framework for problem \eqref{aa} and discuss some results that will be useful throughout the paper. We start with the following embedding result.
		\begin{lemma}\label{lem2.1}{\cite[Theorem~6.21]{Leoni2023fractional}}
			Let $0<s<1$. Then $H^1(\mathbb{R}^2)$ is continuously embedded into $H^s(\mathbb{R}^2)$.
		\end{lemma}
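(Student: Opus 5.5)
The cleanest route is through the Fourier transform, using Plancherel to write the Gagliardo seminorm as a weighted $L^2$ norm of $\hat u$. For $u\in H^1(\mathbb{R}^2)$ the $H^1$ norm is
\[
\|u\|^2=\int_{\mathbb{R}^2}(1+|\xi|^2)|\hat u(\xi)|^2\,d\xi
\]
(up to the normalization of the transform). The fractional space $H^s(\mathbb{R}^2)$ carries the norm $|u|_2^2+[u]_s^2$, where
\[
[u]_s^2=\iint_{\mathbb{R}^2\times\mathbb{R}^2}\frac{|u(x)-u(y)|^2}{|x-y|^{2+2s}}\,dx\,dy .
\]
The first step is the standard identity $[u]_s^2=c(s)\int_{\mathbb{R}^2}|\xi|^{2s}|\hat u(\xi)|^2\,d\xi$. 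To get it, fix $h=x-y$ and apply Plancherel in $x$:
\[
\int_{\mathbb{R}^2}|u(x+h)-u(x)|^2\,dx=\int_{\mathbb{R}^2}|e^{i\xi\cdot h}-1|^2|\hat u(\xi)|^2\,d\xi .
\]
Integrating against $|h|^{-2-2s}$ and using Tonelli gives
\[
[u]_s^2=\int_{\mathbb{R}^2}|\hat u(\xi)|^2\left(\int_{\mathbb{R}^2}\frac{2(1-\cos(\xi\cdot h))}{|h|^{2+2s}}\,dh\right)d\xi .
\]
By rotation invariance and the substitution $h\mapsto h/|\xi|$, the inner integral equals $2C(2,s)^{-1}|\xi|^{2s}$, and it is finite precisely because $0<s<1$.

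The second step is the elementary pointwise bound $|\xi|^{2s}\le 1+|\xi|^2$ for all $\xi$. For $|\xi|\le1$ we have $|\xi|^{2s}\le 1$. For $|\xi|\ge1$ we have $|\xi|^{2s}\le|\xi|^2$. Inserting this into the identity gives $[u]_s^2\le c(s)\|u\|^2$, and hence $\|u\|_{H^s}^2\le(1+c(s))\|u\|^2$. Since $H^1(\mathbb{R}^2)\subset L^2(\mathbb{R}^2)$, the inclusion map is therefore well defined, linear and bounded, which is the claimed continuous embedding.

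The only delicate point is justifying the Fourier identity, namely the interchange of integrals and the finiteness of the constant. Tonelli handles the interchange since all integrands are nonnegative, and finiteness follows from $1-\cos t\le \min\{t^2/2,2\}$, which controls the integral near $0$ (using $s<1$) and at infinity (using $s>0$).

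If one prefers to avoid Fourier analysis entirely, a direct argument also works. Split $[u]_s^2$ into the regions $|h|<1$ and $|h|\ge1$. On $|h|\ge1$ use $\int|u(x+h)-u(x)|^2\,dx\le 4|u|_2^2$. On $|h|<1$ use $\int|u(x+h)-u(x)|^2\,dx\le |h|^2|\nabla u|_2^2$, first for smooth compactly supported $u$ and then by density. This yields
\[
[u]_s^2\le C\left(\frac{|\nabla u|_2^2}{1-s}+\frac{|u|_2^2}{s}\right),
\]
which gives the same conclusion.
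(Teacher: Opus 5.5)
Your proof is correct, and so is the real-variable alternative you sketch at the end. The paper gives no argument of its own here. The lemma is simply quoted from Leoni's book (Theorem 6.21), and its role is to guarantee that the norm $\|u\|^2=|\nabla u|_2^2+\frac12[u]_s^2+|u|_2^2$ introduced immediately afterwards is finite on $H^1(\mathbb{R}^2)$ and equivalent to the standard one. For the same reason the estimate must be read, as you do, with the standard norm $\int(1+|\xi|^2)|\hat u|^2\,d\xi$. With the paper's $\|\cdot\|$, which already contains $[u]_s^2$, it would be circular.

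Your two routes buy different things:
\begin{itemize}
\item \emph{Fourier proof.} It yields the exact identity $[u]_s^2=2C(2,s)^{-1}\int|\xi|^{2s}|\hat u|^2\,d\xi$, which links the seminorm to the symbol of $(-\Delta)^s$. H\"older's inequality in $\xi$ (exponents $1/s$ and $1/(1-s)$) then upgrades your bound to the scale-invariant estimate $[u]_s^2\le c(s)|\nabla u|_2^{2s}|u|_2^{2(1-s)}$. On $\mathcal{S}(a)$ this controls the nonlocal term by $a^{2(1-s)}|\nabla u|_2^{2s}$ alone, matching the fibre scaling $[H(u,\sigma)]_s^2=e^{2s\sigma}[u]_s^2$. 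Its limitation is that it is tied to $p=2$ and to the whole space.
\item \emph{Splitting argument.} This is the standard textbook proof of $W^{1,p}\hookrightarrow W^{s,p}$, and it works verbatim for every $p\ge 1$. It gives explicit constants: $\pi/(1-s)$ from the region $|h|<1$ and $4\pi/s$ from $|h|\ge 1$. If you split at radius $r=|u|_2/|\nabla u|_2$ instead of $1$, it recovers the same interpolation inequality with no Fourier analysis.
\end{itemize}
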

			\noindent The presence of both local and nonlocal operators in \eqref{aa} naturally leads us to consider the space $H^1(\mathbb{R}^2)$ to be the solution space, with the inner product
			\[
			(u,v)=
			\int_{\mathbb{R}^2}\nabla u\cdot\nabla v\,dx
			+ \ll u, v \gg
			+\int_{\mathbb{R}^2}uv\,dx,
			\]
			where 
			$$\ll u,v\gg:=\frac{1}{2}\int_{\mathbb{R}^2}\int_{\mathbb{R}^2}
			\frac{(u(x)-u(y))(v(x)-v(y))}{|x-y|^{2+2s}}\,dx\,dy$$
			and the associated norm $$\|u\|=(u,u)^{1/2}=\left(|\nabla u|_2^2+\frac{[u]_s^2}{2}+|u|_2^2\right)^{\frac{1}{2}},$$
			with $$[u]_s^2=\int_{\mathbb{R}^2}\int_{\mathbb{R}^2}
			\frac{|u(x)-u(y)|^2}{|x-y|^{2+2s}}\,dx\,dy.$$
			Next, we recall the Trudinger-Moser inequality in $\mathbb{R}^2$ that will help us to deal with the exponential growth of $f$.
			\begin{proposition}\label{PRa}\cite{Cao,Cassani2014} 
				If $\gamma>0$ and $u\in H^{1}(\mathbb{R}^{2})$, then
				\begin{eqnarray*}
					\begin{aligned}
						\int_{\mathbb{R}^{2}}\Big(e^{\gamma |u|^{2}}-1\Big)dx<\infty.
					\end{aligned}
				\end{eqnarray*}
				Moreover, if $\gamma<4\pi$ and $|u|_{2}\leq M<\infty$, then there exists a constant $\mathcal{C}(M,\gamma)>0$ such that
				\begin{eqnarray*}
					\begin{aligned}
						\sup_{|\nabla u|^{2}_{2}\leq1, |u|_{2}\leq M }
						\int_{\mathbb{R}^{2}}\Big(e^{\gamma |u|^{2}}-1\Big)dx
						<\mathcal{C}(M,\gamma).
					\end{aligned}
				\end{eqnarray*}
			\end{proposition}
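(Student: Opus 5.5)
The plan is to reduce the first assertion to a level-set estimate and the second to Moser's sharp inequality combined with a scaling argument, since both are essentially the Cao/Cassani results cited in the statement.

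\textbf{Finiteness.} Fix $u\in H^1(\mathbb{R}^2)$ and $\gamma>0$. By density, pick $v\in C_c^\infty(\mathbb{R}^2)$ with $\|u-v\|$ small, and write $w=u-v$. Using $|u|^2\le (1+\delta)|w|^2+(1+\delta^{-1})|v|^2$ and Young's inequality on the product of exponentials, it suffices to bound $\int (e^{p\gamma(1+\delta)|w|^2}-1)\,dx$ for a function $w$ of arbitrarily small $H^1$-norm; the factor involving $v$ is harmless because $v$ is bounded with compact support. For such $w$, I would show integrability as follows.
\begin{itemize}
\item Split $\mathbb{R}^2$ into $\{|w|\ge 1\}$ and its complement.
\item On $\{|w|<1\}$, use $e^{t}-1\le Ct$ for $t\le C'$, which gives a bound by $C|w|_2^2$.
\item The set $A=\{|w|\ge1\}$ has measure at most $|w|_2^2$.
\item On $A$, apply Moser's inequality on the bounded-measure set to $(|w|-1)^+\in H^1_0$-type functions. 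Concretely, use $|w|^2\le(1+\delta)((|w|-1)^+)^2+C_\delta$ and $|\nabla (|w|-1)^+|_2\le|\nabla w|_2$, which is small, together with Moser's inequality on domains of finite measure: $\int_\Omega e^{4\pi g^2/|\nabla g|_2^2}\le C|\Omega|$.
\end{itemize}

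\textbf{Uniform bound.} For the second claim, let $\gamma<4\pi$, $|\nabla u|_2\le1$ and $|u|_2\le M$. I would follow the Cao scheme, repeating the same splitting. Set $A(u)=\{|u|\ge1\}$, so that $|A(u)|\le M^2$. The part of the integral outside $A(u)$ is bounded by $C M^2$. On $A(u)$, put $g=(|u|-1)^+$. Then $|u|^2\le(1+\varepsilon)g^2+C_\varepsilon$, and I choose $\varepsilon$ so that $\gamma(1+\varepsilon)<4\pi$. Since $|\nabla g|_2\le1$ and $g$ vanishes on the boundary of $A(u)$, Moser's sharp inequality on a set of measure at most $M^2$ yields a bound $C e^{\gamma C_\varepsilon}M^2$. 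This needs care because $A(u)$ need not be smooth. I would handle that by passing to the Schwarz symmetrization $g^*$, which is supported on a ball of area $|A(u)|$ and satisfies $|\nabla g^*|_2\le|\nabla g|_2$ by P\'olya--Szeg\H{o}. Summing the two parts gives $\mathcal{C}(M,\gamma)$.

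\textbf{Main obstacle.} The delicate step is ensuring the constant depends only on $(M,\gamma)$ and not on the support of $u$. The rearrangement argument handles this, provided we also note that $g\in H^1$ with $g^*\in H^1_0(B_R)$, $\pi R^2\le M^2$. Since the statement is classical and cited, one may simply invoke \cite{Cao,Cassani2014} directly.
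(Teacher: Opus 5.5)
Your proof is correct. The paper gives no argument for this proposition; it is quoted from the cited references (Cao and Cassani et al.). So your sketch is a reconstruction of the classical lemma rather than an alternative to a proof in the paper.

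The usual proof (as in Cao) first replaces $u$ by its Schwarz symmetrization $u^*$ and cuts $\mathbb{R}^2$ at a radius $r_0$. Outside $B_{r_0}$ it uses the decay $u^*(r)^2\le M^2/(\pi r^2)$, and inside it applies Moser's inequality to $u^*-u^*(r_0)\in H^1_0(B_{r_0})$. You instead cut at the level $|u|=1$ and symmetrize only the truncation $g=(|u|-1)^+$. Both routes rest on the same two ingredients:
\begin{itemize}
\item the elementary inequality $(g+1)^2\le(1+\varepsilon)g^2+1+\varepsilon^{-1}$;
\item Moser's bound $\int_B e^{4\pi h^2}\,dx\le C|B|$ for $h\in H^1_0(B)$ with $|\nabla h|_2\le 1$, on a ball $B$ of area at most $M^2$.
\end{itemize}
Your level-set version has the mild advantage of not needing the pointwise radial decay estimate.

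Two minor remarks. First, the rearrangement step you introduce in the second part is equally needed in the finiteness part. There $\{|w|\ge 1\}$ need not be open, so $(|w|-1)^+$ is not a priori in $H^1_0$ of it. Second, the density and Young splitting in the finiteness part can be avoided by truncating at a large level $L$ instead of $1$. Indeed $|\nabla(|u|-L)^+|_2^2=\int_{\{|u|>L\}}|\nabla u|^2\,dx\to 0$ as $L\to\infty$, which makes the Moser exponent admissible for any fixed $\gamma>0$.
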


			\noindent We now introduce and prove a Trudinger-Moser type inequality for the mixed local-nonlocal setting.

			\begin{proposition}\label{mixedTM}
				Let $u\in 
				H^{1}(\mathbb{R}^{2})$.
				Then the following assertions hold:
				
				\begin{enumerate}
					\item[(i)] If $\gamma<4\pi$ and $\|u\|\leq 1$, then there exists a constant
					$C_{\gamma}>0$, depending only on $\gamma$, such that
					\[
					\int_{\mathbb{R}^{2}}\left(e^{\gamma |u|^{2}}-1\right)\,dx\leq C_{\gamma}.
					\]
					
					\item[(ii)] More generally, if $u\in H^1(\mathbb{R}^{2})$ satisfies
					\[
					\gamma\|u\|^{2}<4\pi,
					\]
					then there exists a constant $C>0$ such that
					\[
					\int_{\mathbb{R}^{2}}\left(e^{\gamma |u|^{2}}-1\right)\,dx\leq C.
					\]
				\end{enumerate}
			\end{proposition}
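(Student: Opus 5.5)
Both parts follow from Proposition \ref{PRa} once we note that the mixed norm dominates the two quantities that appear in the classical constraint. For every $u\in H^1(\mathbb{R}^2)$,
\[
|\nabla u|_2^2\leq |\nabla u|_2^2+\tfrac12[u]_s^2+|u|_2^2=\|u\|^2,
\]
and likewise $|u|_2\leq \|u\|$. Lemma \ref{lem2.1} guarantees $[u]_s<\infty$, so $\|\cdot\|$ is a genuine norm on $H^1(\mathbb{R}^2)$. It is equivalent to the standard one and bounded below by both $|\nabla u|_2$ and $|u|_2$. No fine estimate of the Gagliardo seminorm is needed; the nonlocal term only makes the admissible set smaller.

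\textbf{Part (i).} Let $\|u\|\leq 1$. Then $|\nabla u|_2^2\leq 1$ and $|u|_2\leq 1$, so $u$ lies in the set over which the supremum in Proposition \ref{PRa} is taken with $M=1$. Since $\gamma<4\pi$, we obtain
\[
\int_{\mathbb{R}^2}\big(e^{\gamma|u|^2}-1\big)\,dx<\mathcal{C}(1,\gamma)=:C_\gamma ,
\]
and this constant depends only on $\gamma$.

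\textbf{Part (ii).} The case $u=0$ is trivial, so take $u\neq 0$ and set $v=u/\|u\|$, which satisfies $\|v\|=1$. Then
\[
\gamma|u|^2=\gamma\|u\|^2|v|^2=:\tilde\gamma|v|^2, \qquad \tilde\gamma=\gamma\|u\|^2<4\pi .
\]
Applying (i) to $v$ with exponent $\tilde\gamma$ gives
\[
\int_{\mathbb{R}^2}\big(e^{\gamma|u|^2}-1\big)\,dx=\int_{\mathbb{R}^2}\big(e^{\tilde\gamma|v|^2}-1\big)\,dx\leq C_{\tilde\gamma}.
\]

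The one point to handle carefully is the dependence of the constant in (ii). It depends on $\tilde\gamma$, and hence on $u$ through $\|u\|$. For the later compactness arguments we want uniformity over sets $\{\gamma\|u\|^2\leq \beta\}$ with $\beta<4\pi$ fixed. This follows because $\int(e^{\gamma|v|^2}-1)\,dx$ is monotone increasing in $\gamma$, so we may take $C=C_{\beta}$. I would record this uniform version explicitly, since it is what gets used for Palais--Smale sequences whose norms are controlled below the Trudinger--Moser threshold.
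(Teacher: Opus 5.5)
Your proof is correct and follows the paper's argument: part (i) applies Proposition \ref{PRa} with $M=1$, using $|\nabla u|_2,|u|_2\le\|u\|$, and part (ii) applies (i) to $v=u/\|u\|$ with exponent $\gamma\|u\|^2<4\pi$. Your extra remark is a worthwhile addition: by monotonicity in the exponent, the constant can be taken as $C_\beta$ uniformly on $\{\gamma\|u\|^2\le\beta\}$ for fixed $\beta<4\pi$, whereas the paper leaves the dependence of $C$ on $u$ implicit even though Lemma \ref{TMineq} relies on exactly this uniformity.
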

			
			\begin{proof}
				If \(\gamma<4\pi\) and \(\|u\|\le 1\), then
				\[
				|\nabla u|_2\le \|u\|\le 1, 
				\qquad
				|u|_2\le \|u\|\le 1.
				\]
				Thus Proposition~\ref{PRa} gives
				\[
				\int_{\mathbb R^2}\left(e^{\gamma |u|^2}-1\right)\,dx\le C_\gamma,
				\]
				and hence (i) follows. Finally, assume that \(\gamma\|u\|^2<4\pi\). If \(u=0\), the result is trivial.
				Otherwise set \(v=u/\|u\|\). Then \(\|v\|=1\). Let
				\[
				\beta:=\gamma\|u\|^2<4\pi.
				\]
				Applying (i) to \(v\) with exponent \(\beta\), we obtain
				\[
				\int_{\mathbb R^2}\left(e^{\beta |v|^2}-1\right)\,dx\le C_\beta.
				\]
				Since \(\gamma |u|^2=\beta |v|^2\), we get
				\[
				\int_{\mathbb R^2}\left(e^{\gamma |u|^2}-1\right)\,dx
				=
				\int_{\mathbb R^2}\left(e^{\beta |v|^2}-1\right)\,dx
				\le C_\beta.
				\]
				This proves (ii).
			\end{proof}
			\noindent We also recall the Hardy-Littlewood-Sobolev inequality; see \cite{Lieb}.
			
			\begin{proposition}\label{PRc}
				Let $t_1,t_2>1$, $0<\alpha<2$, with $\frac{1}{t_1}+\frac{2-\alpha}{2}+\frac{1}{t_2}=2$. If $f\in L^{t_1}(\mathbb{R}^{2})$ and $g\in L^{t_2}(\mathbb{R}^{2})$, then there exists a constant $C(t_1,\alpha,t_2)>0$ such that
				\begin{eqnarray*}
					\begin{aligned}
						\int_{\mathbb{R}^{2}} (I_{\alpha}\ast f)g\, dx \leq C(t_1,\alpha,t_2)|f|_{t_1}|g|_{t_2}.
					\end{aligned}
				\end{eqnarray*}
			\end{proposition}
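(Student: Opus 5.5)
This is the classical Hardy--Littlewood--Sobolev inequality, quoted from \cite{Lieb}, so the plan is to reduce it to the standard statement rather than build a new argument. In $\mathbb{R}^2$ the kernel $I_\alpha(x)=A_\alpha|x|^{-(2-\alpha)}$ is a constant multiple of $|x|^{-\lambda}$ with $\lambda=2-\alpha\in(0,2)$. Lieb's theorem gives
\[
\Bigl|\int_{\mathbb{R}^2}\int_{\mathbb{R}^2}\frac{f(x)g(y)}{|x-y|^{\lambda}}\,dx\,dy\Bigr|\le C(t_1,\lambda,t_2)|f|_{t_1}|g|_{t_2},
\]
provided $\frac{1}{t_1}+\frac{\lambda}{2}+\frac{1}{t_2}=2$. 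This is exactly the hypothesis stated. Writing $\int (I_\alpha\ast f)g\,dx$ as this double integral, Fubini--Tonelli applies once the integral with $|f|,|g|$ is shown finite. Then multiplying by $A_\alpha$ gives the claim with $C(t_1,\alpha,t_2)=A_\alpha C(t_1,2-\alpha,t_2)$.

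If a self-contained argument is wanted, I would proceed in three steps.

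\textbf{Step 1: Hardy--Littlewood--Sobolev potential estimate.} Show that $f\mapsto I_\alpha\ast f$ maps $L^{t_1}$ boundedly into $L^{r}$, where $\frac1r=\frac1{t_1}-\frac{\alpha}{2}$. This is Hedberg's trick. Split the convolution into $|x-y|<R$ and $|x-y|\ge R$. The near part is bounded by $CR^{\alpha}Mf(x)$, with $Mf$ the maximal function. The far part is bounded, via H\"older, by $CR^{\alpha-2/t_1}|f|_{t_1}$. Optimizing in $R$ gives
\[
|I_\alpha\ast f(x)|\le C\,(Mf(x))^{1-\alpha t_1/2}\,|f|_{t_1}^{\alpha t_1/2}.
\]
The maximal theorem in $L^{t_1}$ ($t_1>1$) then yields the $L^r$ bound.

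\textbf{Step 2: apply H\"older.} The exponent relation gives $\frac1r+\frac1{t_2}=1$, since
\[
\frac1{t_1}-\frac{\alpha}{2}+\frac1{t_2}=2-\frac{2-\alpha}{2}-\frac{\alpha}{2}=1 .
\]
So H\"older's inequality with $r$ and $t_2$ concludes the proof.

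\textbf{Main obstacle.} The only delicate point is checking that the exponents are admissible. We need $t_1<2/\alpha$ so that $r<\infty$ and the far-part integral converges, and this follows from $t_2>1$ via the relation. The near part needs only $\alpha>0$. Everything else is routine.
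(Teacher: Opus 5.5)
Your reduction is correct and matches the paper: the paper gives no proof of its own and simply quotes the Hardy--Littlewood--Sobolev inequality from Lieb, with kernel exponent $2-\alpha\in(0,2)$ and the factor $A_\alpha$ absorbed into the constant. Your optional self-contained route is also sound. It uses Hedberg's pointwise bound to get $I_\alpha\ast f\in L^r$ with $\frac1r=\frac1{t_1}-\frac{\alpha}{2}$, then H\"older's inequality via $\frac1r+\frac1{t_2}=1$, and the requirement $t_1<2/\alpha$ is correctly deduced from $t_2>1$.
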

			\noindent As a consequence of Proposition \ref{PRc}, the term
			\[
			\int_{\mathbb{R}^{2}} (I_{\alpha}\ast F(u))F(u)\, dx
			\]
			is well defined provided that $F(u)\in L^{t}(\mathbb{R}^{2})$ with $t>1$ satisfying
			\[
			\frac{2}{t}+\frac{2-\alpha}{2}=2,
			\]
			which yields
			\[
			F(u)\in L^{\frac{4}{2+\alpha}}(\mathbb{R}^{2}).
			\]
			Finally, we introduce the notion of weak solutions.
			
			\begin{definition}\label{def1.1}
				A function $u \in H^1(\mathbb{R}^2)$ is said to be a weak solution of \eqref{aa}
				if $|u|_2^2 = a^2$ and
				\[
				\int_{\mathbb{R}^2} \nabla u \cdot \nabla v\,dx
				+\ll u,v \gg
				+\lambda \int_{\mathbb{R}^2}uv\,dx
				=
				\Lambda\int_{\mathbb{R}^2}
				(I_\alpha * F(u))\,f(u)v\,dx,
				\]
				for all $v \in H^1(\mathbb{R}^2)$.
			\end{definition}
			\noindent Solutions of \eqref{aa}, called the normalized solutions, correspond to critical points of the energy functional
			\[
			J:H^1(\mathbb{R}^{2}) \to \mathbb{R}
			\]
			defined by
			\begin{equation}\label{J}
				\begin{aligned}
					J(u)
					&=\frac{|\nabla u|_2^2}{2}
					+\frac{[u]^2_s}{4} -\frac{\Lambda}{2}\int_{\mathbb{R}^{2}} (I_{\alpha}\ast F(u))F(u)\,dx,
				\end{aligned}
			\end{equation}
			constrained to the following $L^2$- sphere
			\begin{equation}\label{tau}
				\mathcal{S}(a):=\left\{u\in H^{1}(\mathbb{R}^{2}) : |u|_2^2 = a^2\right\}.
			\end{equation}
			For $a>0$, if $u$ is a critical point of the constrained functional
			$J|_{\mathcal{S}(a)}$, i.e.,
			\(
			J'|_{\mathcal{S}(a)}(u)=0,
			\)
			then, since $\mathcal{S}(a)$ is regular, there exists some $\lambda\in\mathbb R$
			such that $(u,\lambda)$ solves \eqref{aa}. Here, the parameter $\lambda\in\mathbb{R}$ will appear as a Lagrange multiplier depending on the solution
			$u\in H^{1}(\mathbb R^2)$ and is not a priori given. 
			To check that the functional $J$ is well defined,
			it is enough to see if $(I_{\alpha}*F(u))F(u)\in L^1(\mathbb{R}^2)$.
			Since, by using Proposition \ref{PRc}, \eqref{snona} and H$\ddot{\text{o}}$lder's inequality, for any $t,t'>1$ satisfying $\frac{1}{t}+\frac{1}{t'}=1$, we have:
			\begin{eqnarray*}
				\int_{\mathbb{R^2} }(I_{\alpha}*F(u))F(u) & \leq & C(\alpha) \left(\int_{\mathbb{R}^{2}}|F(u)|^{\frac{4}{2+\alpha}}\right)^{\frac{2+\alpha}{4}}\left(\int_{\mathbb{R}^{2}}|F(u)|^{\frac{4}{2+\alpha}}\right)^{\frac{2+\alpha}{4}}\\
				& = & C(\alpha) \left(|F(u)|^{\frac{4}{2+\alpha}}\right)^{\frac{2+\alpha}{2}}\\
				& \leq & C(\alpha) \left(\int_{\mathbb{R}^{2}}\left(\eps |u|^{\tau+1}+C_{\eps}|u|^q\left(e^{\gamma|u|^2}-1\right)\right)^{\frac{4}{2+\alpha}}\right)^{\frac{2+\alpha}{2}}\\
				& \leq & C'(\alpha)\left(\left(\int_{\mathbb{R}^{2}}\eps^{\frac{4}{2+\alpha}}|u|^{(\tau+1)\frac{4}{2+\alpha}}\right)^{\frac{2+\alpha}{2}} \right.\\
				&&+ \left.\left(\int_{\mathbb{R}^{2}}C_{\eps}^{\frac{4}{2+\alpha}}|u|^{\frac{4q}{2+\alpha}}\left(e^{\gamma|u|^2}-1\right)^{\frac{4}{2+\alpha}}\right)^{\frac{2+\alpha}{2}}\right)\\
				& \leq & C'(\alpha) \left(\eps^2 |u|_{(\tau+1)\frac{4}{2+\alpha}}^{2(\tau+1)}+C_{\eps}^2\left(\int_{\mathbb{R}^{2}}|u|^{\frac{4qt'}{2+\alpha}}\right)^{\frac{2+\alpha}{2t'}}\left(\int_{\mathbb{R}^{2}}\left(e^{\gamma|u|^2-1}\right)^{\frac{4t}{2+\alpha}}\right)^{\frac{2+\alpha}{2t}}\right)\\
				& < & +\infty,
			\end{eqnarray*}
			by the continuous inclusion of $H^1(\mathbb{R}^2)\hookrightarrow L^r(\mathbb{R}^2)$ for all $r\in [2,\infty)$, the fact that
			$$(e^s-1)^m \le C_m (e^{ms}-1), \qquad ~ m>1,~s\ge 0,$$
			and  using the Proposition \ref{PRa}. Therefore, $J$ is well defined and of class $C^{1}$. 
			However, it is well known that $J$ does not satisfy the Palais-Smale condition. 
			To deal with this lack of compactness, we exploit the associated Poho\v{z}aev identity, see Appendix for details, and study the functional on the Poho\v{z}aev Manifold $\mathcal{P}(a)$.

			\begin{lemma}
				If $(u,\lambda)$ is any couple weakly solving problem \eqref{aa}, then $u \in \mathcal{P}(a)$, where
				\begin{equation}\label{Pohozaev-manifold}
					\mathcal{P}(a)
					:=
					\left\{
					u\in \mathcal{S}(a)\;:\; P(u)=0
					\right\},
				\end{equation}
				where
				\begin{equation}\label{Pohozaev}
					\begin{aligned}
						P(u)
						&:=
						|\nabla u|_{2}^{2}
						+\frac{s}{2}[u]_{s}^{2}
						+\Lambda\left(\frac{2+\alpha}{2}\right)
						\int_{\mathbb{R}^{2}}(I_{\alpha}*F(u))F(u)\,dx 
						-
						\Lambda\int_{\mathbb{R}^{2}}(I_{\alpha}*F(u))f(u)u\,dx .
					\end{aligned}
				\end{equation}
			\end{lemma}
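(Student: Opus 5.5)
The plan is to obtain $P(u)=0$ as the difference of two integral identities satisfied by any weak solution $(u,\lambda)$ of \eqref{aa}: the Nehari identity (test the equation with $u$) and the Poho\v{z}aev identity (formally, test with $x\cdot\nabla u$). The $\lambda$-terms will cancel, which is why $P$ contains no Lagrange multiplier.

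\textbf{Nehari identity.} Take $v=u$ in Definition~\ref{def1.1}. Since $\ll u,u\gg=\frac12[u]_s^2$ and $|u|_2^2=a^2$, this gives
\[
|\nabla u|_2^2+\tfrac12[u]_s^2+\lambda a^2=\Lambda\int_{\mathbb{R}^2}(I_\alpha*F(u))f(u)u\,dx .
\]
The right-hand side is finite by Proposition~\ref{PRc}, the growth bounds \eqref{snonb}/\eqref{nonb}, and Proposition~\ref{PRa}, exactly as in the well-definedness computation for $J$.

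\textbf{Poho\v{z}aev identity.} In dimension $N=2$ the standard scaling contributions are as follows.
\begin{itemize}
\item The local term contributes $\frac{N-2}{2}|\nabla u|_2^2=0$.
\item The nonlocal term contributes $\frac{N-2s}{2}\ll u,u\gg=\frac{1-s}{2}[u]_s^2$.
\item The mass term contributes $\frac N2\lambda|u|_2^2=\lambda a^2$.
\item The Choquard term contributes $\frac{N+\alpha}{2}\Lambda\int(I_\alpha*F(u))F(u)$.
\end{itemize}
So the identity to establish is
\[
\tfrac{1-s}{2}[u]_s^2+\lambda a^2=\Lambda\,\tfrac{2+\alpha}{2}\int_{\mathbb{R}^2}(I_\alpha*F(u))F(u)\,dx .
\]
Subtracting this from the Nehari identity gives
\[
|\nabla u|_2^2+\tfrac{s}{2}[u]_s^2=\Lambda\int(I_\alpha*F(u))f(u)u-\Lambda\tfrac{2+\alpha}{2}\int(I_\alpha*F(u))F(u),
\]
which is precisely $P(u)=0$. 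Since $u\in\mathcal S(a)$ by definition, this yields $u\in\mathcal P(a)$.

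\textbf{Consistency check.} With $u_t(x)=t\,u(tx)$, which preserves $|u|_2$, a direct computation gives $P(u)=\frac{d}{dt}J(u_t)\big|_{t=1}$. This confirms that the coefficients above, including the factor $\frac12$ coming from $\ll u,u\gg=\frac12[u]_s^2$, are the right ones.

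\textbf{Main obstacle: justifying the Poho\v{z}aev identity.} The test function $x\cdot\nabla u$ is neither in $H^1$ nor compactly supported. I would proceed in three steps.
\begin{enumerate}
\item \emph{Regularity.} Use the regularity result of the Appendix. Since $F(u)\in L^{4/(2+\alpha)}$ and $u\in L^r$ for all $r$, we get $I_\alpha*F(u)\in L^\infty$ and $f(u)\in L^p_{loc}$. Elliptic regularity for $\mathcal L$ then gives $u\in W^{2,p}_{loc}\cap L^\infty$.
\item \emph{Cutoff test.} Test the equation with $\varphi(x/R)\,x\cdot\nabla u$ and let $R\to\infty$.
\begin{itemize}
\item The local and mass terms are handled by the divergence theorem, as in Berestycki--Lions.
\item The Choquard term is treated by the symmetric splitting $x=\frac{x+y}{2}+\frac{x-y}{2}$ in the double integral, as in Moroz--Van Schaftingen. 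This produces the constant $\frac{2+\alpha}{2}$.
\end{itemize}
\item \emph{Fractional term.} The delicate point is to show that
\[
\ll u,\varphi_R\,x\cdot\nabla u\gg\;\longrightarrow\;\tfrac{2s-2}{2}\ll u,u\gg .
\]
I would first try the scaling argument directly on the form: $t\mapsto[u(t\,\cdot)]_s^2=t^{2s-2}[u]_s^2$ is differentiable, and its derivative at $t=1$ equals $2\ll u,x\cdot\nabla u\gg$ once the convergence is justified. Alternatively, one can work on the Fourier side with $|\xi|^{2s}$ and commutator bounds for the cutoff, using $u\in H^1$ and the regularity from step 1.
\end{enumerate}
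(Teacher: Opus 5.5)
Your proposal is correct and follows the paper's proof. The paper tests the equation with $u$ to get the Nehari identity and subtracts the Poho\v{z}aev identity $\frac{1-s}{2}[u]_s^2+\lambda a^2=\Lambda\frac{2+\alpha}{2}\int_{\mathbb{R}^2}(I_\alpha*F(u))F(u)\,dx$, so the $\lambda$-terms cancel and $P(u)=0$. The paper justifies that identity in the Appendix via $C^{1,\delta}_{loc}$ regularity and a difference-quotient argument, rather than your cutoff $x\cdot\nabla u$ sketch.

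One small slip: since $\ll u,u\gg=\frac12[u]_s^2$, the derivative of $t\mapsto[u(t\,\cdot)]_s^2$ at $t=1$ is $4\ll u,x\cdot\nabla u\gg$, not $2\ll u,x\cdot\nabla u\gg$. With the correct factor you recover your (correct) target value $\frac{2s-2}{2}\ll u,u\gg$.
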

			\begin{proof}
				Indeed, testing the equation with the solution itself, one gets
				\begin{equation}\label{eq_2.5}
					| \nabla u |_2^2 + \frac{1}{2}[u]_s^2 +\lambda | u |_2^2=\Lambda\int_{\mathbb{R}^N}(I_{\alpha}*F(u))f(u)udx.  
				\end{equation}
				Also, as proved in Theorem \ref{Pohozaev Indentity} any solution to \eqref{aa} must satisfy the following
				\begin{equation}\label{eq_2.6}
					\left(\frac{1-s}{2}\right)[u]_s^2+\lambda \left| u \right|_2^2= \Lambda\left(\frac{2+\alpha}{2}\right)\int_{\mathbb{R}^2}(I_{\alpha}*F(u))F(u)dx.
				\end{equation}
				Using \eqref{eq_2.5} in \eqref{eq_2.6} we get
				\begin{equation*}
					| \nabla u |_2^2 +\frac{s}{2}[u]_s^2+ \Lambda\left(\frac{2+\alpha}{2}\right)\int_{\mathbb{R}^2}(I_{\alpha}*F(u))F(u)-\Lambda\int_{\mathbb{R}^2}(I_{\alpha}*F(u))f(u)u=0.
				\end{equation*}
				\text{that is}, $P(u) = 0$.
			\end{proof}
			\noindent Thus any solution of \eqref{aa} lies on the Poho\v{z}aev manifold $\mathcal{P}(a)$ and we define the ground state solution as follows:
			\begin{definition}
				Suppose, $u\in H^1(\mathbb{R}^2)$ solves \eqref{aa}, we call it normalized ground state if
				$J(u)$ possesses the least energy among all normalized solutions, i.e.,
				\[
				J(u)
				=
				\min\left\{
				J(v): v\in \mathcal{P}(a),\ J'|_{\mathcal{P}(a)}(v)=0
				\right\},
				\]
				in particular, if $u$ satisfies
				\[
				J(u)=m(a):=\inf_{v\in\mathcal P(a)} J(v).
				\]
			\end{definition}
			\noindent To overcome the lack of compactness due to whole space $\mathbb{R}^2$, we restrict our analysis to the radial subspace 
			$H^{1}_{rad}(\mathbb{R}^{2})$ of $H^{1}(\mathbb{R}^{2})$. 
			Accordingly, we define
			\[
			\mathcal{S}_{r}(a):=\mathcal{S}(a)\cap H^{1}_{rad}(\mathbb{R}^{2}),
			\]
			where
			\[
			H^{1}_{rad}(\mathbb{R}^{2}) := \left\{ u \in H^{1}(\mathbb{R}^{2}) : u(x) = u(|x|),\ x \in \mathbb{R}^2 \right\}.
			\]
			
			\noindent	In the sequel, we shall use the radial constraint
			\(
			S_r(a)
			\)
			in order to recover compactness. The minimax construction and the
			Palais-Smale analysis will be carried out under a unified
			Trudinger-Moser threshold condition.  
			By \ref{f5}, for $\gamma>0$ sufficiently small, one has
			\(
			\gamma \| u\|^{2}  < (2+\alpha)\pi,
			\)
			and $u$ bounded in $H^{1}(\mathbb{R}^{2})$. 
			Similarly, by \ref{f6}, for $\gamma>\gamma_{0}$ sufficiently close to $\gamma_{0}$, we have
			\[
			\gamma \| u\|^{2}  < (2+\alpha)\pi,
			\qquad 
			\|u\|^{2}<\frac{(2+\alpha)\pi}{\gamma_{0}}.
			\]
			Therefore, in what follows, we work under the unified condition
			\(
			\gamma \| u\|^{2}  < (2+\alpha)\pi.
			\)
			\section{{Preliminaries for existence results}}\label{sec3.1}
			
			\noindent In this section, we develop the variational structure needed to obtain
			normalized solutions to the problem \((P_\lambda)\). This section is divided into two parts. In Subsection~\ref{Sgeo},
			we establish the minimax structure and define the mountain-pass level
			\(\gamma(a)\). In Subsection~\ref{PSa}, we analyse the
			Palais-Smale sequence obtained at this level and prove the
			compactness properties required in the proofs of the main theorems.
			\subsection{{The Minimax Approach}}\label{Sgeo}
			Defining the $L^2$ norm preserving scaling, called the fibre map
			\[
			H(u,\sigma)(x):=e^\sigma u(e^\sigma x), \text{ for every } (u,\sigma)\in H^1(\mathbb{R}^2)\times\mathbb R.
			\]
			Clearly,
			$$|\nabla H(u,\sigma)|_2^2=e^{2\sigma}|\nabla u|_2^2;\;\;[H(u,\sigma)]_s^2=e^{2s\sigma}[u]_s^2,\;\;|H(u,\sigma)|_{\xi}^{\xi}=e^{(\xi-2)\sigma}|u|_{\xi}^{\xi} \text{ for any }\xi\geq2$$
			and 
			$$\int_{\mathbb{R}^2}(I_{\alpha}*F(H(u,\sigma)))F(H(u,\sigma))dx=e^{-(2+\alpha)\sigma}\int_{\mathbb{R}^2}(I_{\alpha}*F(e^\sigma u))F(e^\sigma u)dx.$$
			We initiate by discussing the behaviour of the constrained functional
			\(J\) along this fibre map.
			
			\begin{lemma}\label{geo_mixed}
				Let $u\in S_r(a)$ and \ref{f1}-\ref{f2} holds. Then,
				\begin{enumerate}
					\item[(i)] as $\sigma\rightarrow-\infty$, we have:
					\begin{equation*}
						\left\{
						| \nabla H(u,\sigma)|_2^2 \rightarrow 0; \;[H(u,\sigma)]_s^2  \rightarrow 0  \text{ and }
						J(H(u,\sigma))  \rightarrow 0;
						\right\}
					\end{equation*}
					\item [(ii)] as $\sigma\rightarrow +\infty$ we have:
					\begin{equation*}
						\left\{
						| \nabla H(u,\sigma)|_2^2 \rightarrow +\infty; \;[H(u,\sigma)]_s^2  \rightarrow +\infty  \text{ and }
						J(H(u,\sigma))  \rightarrow -\infty.
						\right\}
					\end{equation*}
				\end{enumerate}
			\end{lemma}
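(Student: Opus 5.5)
The kinetic parts follow from the scaling identities recorded just before the statement. We have $|\nabla H(u,\sigma)|_2^2=e^{2\sigma}|\nabla u|_2^2$ and $[H(u,\sigma)]_s^2=e^{2s\sigma}[u]_s^2$. Since $u\in S_r(a)$ with $a>0$, both $|\nabla u|_2$ and $[u]_s$ are strictly positive; the Gagliardo seminorm vanishes only for constants, and a constant in $L^2$ is $0$. Hence as $\sigma\to-\infty$ both quantities tend to $0$, and as $\sigma\to+\infty$ both tend to $+\infty$ (using $s>0$). The substance of the lemma is therefore the behaviour of the Choquard term
\[
\Phi(\sigma):=\int_{\mathbb{R}^2}(I_\alpha*F(H(u,\sigma)))F(H(u,\sigma))\,dx .
\]

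For (i), I will combine the Hardy--Littlewood--Sobolev inequality (Proposition~\ref{PRc}) with the growth bound \eqref{snona} (or \eqref{nona} in the critical case). This gives
\[
\Phi(\sigma)\le C\,|F(H(u,\sigma))|_{\frac{4}{2+\alpha}}^{2}.
\]
I then split $F$ into its polynomial and exponential parts.
\begin{itemize}
\item \emph{Polynomial part.} It produces $|H(u,\sigma)|_{p}^{2(\tau+1)}$ with $p=\frac{4(\tau+1)}{2+\alpha}>2$, and $|H(u,\sigma)|_p^p=e^{(p-2)\sigma}|u|_p^p\to0$.
\item \emph{Exponential part.} By H\"older as in the well-definedness computation, it produces $|H(u,\sigma)|_{r}^{2q}$ for some $r>2$, times $\left(\int (e^{\gamma t|H(u,\sigma)|^2}-1)\right)^{\frac{2+\alpha}{2t}}$.
\end{itemize}
To control the exponential factor uniformly, I note that $\|H(u,\sigma)\|^2=e^{2\sigma}|\nabla u|_2^2+\tfrac12e^{2s\sigma}[u]_s^2+a^2$ stays bounded as $\sigma\to-\infty$. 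More precisely, it tends to $a^2$, and $|\nabla H(u,\sigma)|_2\to0$. Proposition~\ref{PRa} with $|u|_2=a$ fixed, applied after normalizing the gradient to be small, then gives a uniform bound on $\int(e^{\gamma t|H(u,\sigma)|^2}-1)$ for $\sigma$ very negative. Here I use that $\gamma t|\nabla H(u,\sigma)|_2^2<4\pi$ eventually, by writing $\gamma t|v|^2=\gamma t|\nabla v|_2^2\,|v/|\nabla v|_2|^2$. The exponential part is then $O(e^{c\sigma})\to0$, so $\Phi(\sigma)\to0$ and $J(H(u,\sigma))\to0$. I expect this uniform Trudinger--Moser bound to be the only delicate point, since the $L^2$ norm is not small but the gradient is.

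For (ii), I will use \ref{f2}. Integrating $f(t)t\ge\theta F(t)$ gives $F(\xi t)\ge \xi^\theta F(t)$ for $\xi\ge1$, and $F>0$ for $t\neq0$. With $\xi=e^\sigma$ and $\sigma>0$, the scaling formula above gives
\[
\Phi(\sigma)=e^{-(2+\alpha)\sigma}\int(I_\alpha*F(e^\sigma u))F(e^\sigma u)\ge e^{(2\theta-2-\alpha)\sigma}\Phi(0).
\]
Here $\Phi(0)>0$ because $I_\alpha>0$ and $F(u)\ge0$ is nonzero on a set of positive measure. Since $\theta>2+\frac{\alpha}{2}$, we have $2\theta-2-\alpha>2\ge\max\{2,2s\}$. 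Therefore
\[
J(H(u,\sigma))\le \tfrac12e^{2\sigma}|\nabla u|_2^2+\tfrac14e^{2s\sigma}[u]_s^2-\tfrac{\Lambda}{2}e^{(2\theta-2-\alpha)\sigma}\Phi(0)\to-\infty .
\]
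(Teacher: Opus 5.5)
Your overall route is the paper's. For (i) you use Hardy--Littlewood--Sobolev, the growth bounds and H\"older, with the polynomial factors supplying the decay $e^{c\sigma}$. For (ii) you use the homogeneity $F(\xi t)\ge \xi^\theta F(t)$, $\xi\ge 1$, from \ref{f2}. Part (ii) and the polynomial bookkeeping in (i) are correct.

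\textbf{The gap.} It sits exactly at the step you flagged as delicate. Put $v=H(u,\sigma)$ and $w=v/|\nabla v|_2$. You do get $|\nabla w|_2=1$ and a small exponent $\beta=\gamma t|\nabla v|_2^2$. But $|w|_2=a/|\nabla v|_2=a e^{-\sigma}/|\nabla u|_2\to+\infty$ as $\sigma\to-\infty$. Proposition~\ref{PRa} only gives $\int(e^{\beta|w|^2}-1)\,dx<\mathcal{C}(M,\beta)$ with $M\ge|w|_2$, and it says nothing about how $\mathcal{C}$ depends on $M$. So no uniform bound follows. The phrase ``with $|u|_2=a$ fixed'' does not apply to the normalized function: you cannot keep the $L^2$ norm at $a$ and make the exponent small at the same time. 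Applying Proposition~\ref{PRa} to $v$ itself (with $|\nabla v|_2\le 1$, $|v|_2=a$) works only if the exponent $\frac{4t'\gamma}{2+\alpha}$ is already below $4\pi$. That holds in the subcritical case with $\gamma$ small, but not under \ref{f6} when $\gamma_0$ is large.

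\textbf{How the paper avoids it.} The paper normalizes by the full norm, via Proposition~\ref{mixedTM}(ii), which keeps the $L^2$ part bounded. The price is the requirement $\gamma t'\|H(u,\sigma)\|^2\le(2+\alpha)\pi$. Since $\|H(u,\sigma)\|^2\to a^2$, in the critical case this is a mass restriction of the type $\gamma_0a^2<(2+\alpha)\pi$, which is the hypothesis of Theorem~\ref{T2}.

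\textbf{Repair.} The threshold-free statement you were aiming for is true; scale directly. For $\sigma\le 0$ and any $\beta>0$, the substitution $y=e^\sigma x$ and $e^x-1\le xe^x$ give
\[
\int_{\R^2}\bigl(e^{\beta|H(u,\sigma)|^2}-1\bigr)dx=e^{-2\sigma}\int_{\R^2}\bigl(e^{\beta e^{2\sigma}|u|^2}-1\bigr)dy\le \beta\int_{\R^2}|u|^2e^{\beta|u|^2}dy .
\]
By Cauchy--Schwarz and the first part of Proposition~\ref{PRa}, the right-hand side is at most $\beta\bigl(a^2+|u|_4^2(\int_{\R^2}(e^{2\beta|u|^2}-1)dy)^{1/2}\bigr)<\infty$.

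Alternatively, dilate $w$ to unit $L^2$ norm, which leaves the gradient norm unchanged in $\R^2$. Then use the convexity bound $e^{\beta x}-1\le\frac{\beta}{\beta_0}(e^{\beta_0x}-1)$ for $\beta\le\beta_0<4\pi$. The factor $|w|_2^2\,\beta=\gamma t a^2$ stays bounded.

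With either uniform bound, your polynomial factors finish (i).
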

			
			\begin{proof}
				Clearly,
				\begin{equation*}
					|\nabla H(u,\sigma)|_2^2=e^{2\sigma}|\nabla u|_2^2 \rightarrow \left\{
					\begin{array}{rl}
						+\infty  & \text{ as } \sigma\rightarrow+\infty, \\
						0 & \text{ as } \sigma\rightarrow-\infty 
					\end{array}
					\right.
				\end{equation*}
				and           
				
				\begin{equation*}
					[ H(u,\sigma)]_s^2=e^{2s\sigma}[u]_s^2 \rightarrow \left\{
					\begin{array}{rl}
						+\infty  & \text{ as } \sigma\rightarrow+\infty, \\
						0 & \text{ as } \sigma\rightarrow-\infty. 
					\end{array}
					\right.
				\end{equation*}
				Next, we study the behaviour of $J(H(u,\sigma))$. Precisely, we have:
				\begin{equation}\label{J(H)}
					J(H(u,\sigma))= \frac{e^{2\sigma}}{2}|\nabla u|_2^2+\frac{e^{2s\sigma}}{4}[u]_s^2 -\frac{\Lambda}{e^{(2+\alpha)\sigma}}\int_{\mathbb{R}^2}(I_{\alpha}*F(e^{\sigma}u))F(e^{\sigma}u)dx.
				\end{equation}
				Now, using Proposition \ref{PRc}, \eqref{snona}, \eqref{nona} and H$\ddot{\text{o}}$lder's inequality, we get:
				\begin{eqnarray*}
					&& \int_{\mathbb{R}^2}(I_{\alpha}*F(H(u,\sigma)))F(H(u,\sigma))\leq C_1\left(\int_{\mathbb{R}^2}|F(H(u,\sigma))|^{\frac{4}{2+\alpha}}\right)^{\frac{2+\alpha}{2}}\\
					&&\leq C_1\left(\int_{\mathbb{R}^2}|\eps |H(u,\sigma)|^{\tau+1}+\kappa_{\eps}|H(u,\sigma)|^q\left(e^{\gamma|H(u,\sigma)|^2}-1\right)|^{\frac{4}{2+\alpha}}dx\right)^{\frac{2+\alpha}{2}}\\
					&&\leq C_2 \left(\eps^{\frac{4}{2+\alpha}}\int_{\mathbb{R}^2}|H(u,\sigma)|^{\frac{(\tau+1)4}{2+\alpha}}+\kappa_{\eps}^{\frac{4}{2+\alpha}}\int_{\mathbb{R}^2}|H(u,\sigma)|^{\frac{2q}{2+\alpha}}\left(e^{\gamma|H(u,\sigma)|^2}-1\right)^{\frac{4}{2+\alpha}}\right)^{\frac{2+\alpha}{2}}\\
					&& \leq C_3 \eps^2 |H(u,\sigma)|_{\frac{(\tau+1)4}{2+\alpha}}^{2(\tau+1)}+ C_3 \kappa_{\eps}^2 \left(\left(\int_{\mathbb{R}^2}|H(u,\sigma)|^{\frac{4qt}{2+\alpha}}\right)^{\frac{1}{t}}\left(\int_{\mathbb{R}^2}\left(e^{\gamma |H(u,\sigma)|^2}-1\right)^{\frac{4t'}{2+\alpha}}\right)^{\frac{1}{t'}}\right)^{\frac{2+\alpha}{2}}
				\end{eqnarray*}
				for some $t,t'>1$ such that $\frac{1}{t}+\frac{1}{t'}=1$.
				Choosing $t'>1$ close to $1$ such that
				\[
				\gamma t'\|H(u,\sigma)\|^{2}\leq (2+\alpha)\pi,
				\]
				which implies that
				\begin{equation}\label{1Domina1_mixed}
					\left(
					\int_{\mathbb{R}^{2}}
					\left(e^{\gamma|H(u,\sigma)|^2}-1\right)^{\frac{4t'}{2+\alpha}}
					\right)^{\frac{2+\alpha}{2t'}}\leq C_4\left(\int_{\mathbb{R}^2}\left(e^{\frac{4t'\gamma|}{2+\alpha}|H(u,\sigma)|^2}-1\right)dx\right)^{\frac{2+\alpha}{2t'}}
					\leq C.
				\end{equation}
				Thus,
				\begin{eqnarray*}
					\int_{\mathbb{R}^2}(I_{\alpha}*F(H(u,\sigma)))F(H(u,\sigma)) & \leq & C_3 \eps^2 |H(u,\sigma)|_{\frac{4(\tau+1)}{2+\alpha}}^{2(\tau+1)} +C_5 \kappa_{\eps}^2 |H(u,\sigma)|_{\frac{4qt}{2+\alpha}}^{2q}\\
					& = & C_3 \eps^2 e^{(2\tau-\alpha)\sigma} |u|_{\frac{4(\tau+1)}{2+\alpha}}^{2(\tau+1)}+ C_5\kappa_{\eps}^2 e^{\frac{(4qt-4-2\alpha)}{2t}\sigma}|u|_{\frac{4qt}{2+\alpha}}^{2q}\\
					&& \rightarrow 0 \text{ as } \sigma\rightarrow-\infty,
				\end{eqnarray*}
				since $2\tau-\alpha \text{ and }\frac{4qt-4-2\alpha}{2t}>0$ for $\tau>3$ and $q>2$. Therefore, $J(H(u,\sigma))\rightarrow0$ as $\sigma\rightarrow -\infty$. This proves $(i)$.				
				
				\noindent We now prove $(ii)$. Define
				\[
				g(z)=\int_{\mathbb{R}^{2}}(I_{\alpha}\ast F(z))F(z)\,dx.
				\]
				Set
				\[
				w(t)=g\!\left(\frac{tu}{\|u\|}\right)=\int_{\mathbb{R}^2}\left(I_{\alpha}*F\left(\frac{tu}{\left\| u \right\|}\right)\right)F\left(\frac{tu}{\left\| u \right\|}\right)dx.
				\]
				By \ref{f2}, we know that
				\[
				\frac{w'(t)}{w(t)}\geq \frac{2\theta}{t}
				\qquad \text{for } t>0,
				\]
				which implies that
				\[
				g(tu)\geq g\!\left(\frac{u}{\|u\|}\right)\|u\|^{2\theta}t^{2\theta}.
				\]
				Therefore, taking $t=e^{\sigma}$ by \eqref{J(H)} we obtain
				\begin{align*}
					J(H(u,\sigma))
					&\leq
					C_{1}e^{2\sigma}
					+
					C_{2}e^{2s\sigma}
					-
					C_{3}e^{(2\theta-(2+\alpha))\sigma}.
				\end{align*}
				Since $0<s<1$ and
				\[
				2\theta-(2+\alpha)>2>2s
				\]
				the negative term dominates as $\sigma\to +\infty$. Hence,
				\[
				J(H(u,\sigma))\to -\infty
				\qquad \text{as }\sigma\to +\infty.
				\]
				This proves $(ii)$ and completes the proof.
			\end{proof}
			
			{\noindent Now, for any $k>0$, we define:
				$$A_k:=\{u\in \mathcal{S}_r(a): |\nabla u|_2^2+[u]_s^2\leq k\};
				\text{ and } B_k:=\{u\in \mathcal{S}_r(a): |\nabla u |^2_2+[u]_s^2 =4k\},$$
				and study the functional $J$ constrained over these sets.
				\begin{lemma}\label{P1_mixed}
					There exists $K(a)>0$ such that $J(u)$, $P(u)>0$ for all $u\in A$ and 
					$$0<\sup_{u\in A}J(u)\leq \inf_{u\in B} J(u),$$
					where $A=A_{K(a)}$ and $B=B_{K(a)}$.
				\end{lemma}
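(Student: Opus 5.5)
The strategy is the standard Jeanjean-type argument. We show that on the set where the kinetic quantity $T(u):=|\nabla u|_2^2+[u]_s^2$ is small, the Choquard term is of strictly higher order in $T(u)$. The proof then proceeds in three steps.

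\textbf{Step 1: superquadratic bound for the Choquard term.} For $u\in \mathcal S_r(a)$ we have $\|u\|^2\le T(u)+a^2$. We would first restrict to $T(u)\le 4k$ with $k$ small, so that $\gamma t'\|u\|^2<(2+\alpha)\pi$ for a suitable $\gamma$ and $t'>1$. This is available in the subcritical case for any $a$, and in the critical case thanks to $a^2<(2+\alpha)\pi/\gamma_0$. Repeating the chain of estimates from the proof of Lemma \ref{geo_mixed}, which uses Proposition \ref{PRc}, \eqref{snona}/\eqref{nona}, H\"older's inequality and Proposition \ref{mixedTM}, we get
\[
\int_{\mathbb R^2}(I_\alpha*F(u))F(u)\,dx\le C\Big(|u|^{2(\tau+1)}_{\frac{4(\tau+1)}{2+\alpha}}+|u|^{2q}_{\frac{4qt}{2+\alpha}}\Big).
\]
By the Gagliardo--Nirenberg inequality $|u|_p^p\le C_p|\nabla u|_2^{p-2}|u|_2^2$ in $\mathbb R^2$, each term is bounded by $C(a)\,T(u)^{\beta}$ with exponent $\beta>1$. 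For example, the second term gives $\beta=q-\frac{2+\alpha}{2t}>1$ once $q>2$. The first term behaves similarly because $\tau>3$. Using $f(t)t\le C(|t|^{\tau+1}+|t|^q(e^{\gamma t^2}-1))$ as well, the same bound holds for $\int(I_\alpha*F(u))f(u)u\,dx$. Thus both Choquard terms are $\le C(a)\,T(u)^{\beta}$ with $\beta>1$, uniformly for $T(u)\le 4k$ with $k$ small.

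\textbf{Step 2: positivity on $A$.} For $u\in A_k$, using $s<1$ we have
\[
J(u)\ge \tfrac14 T(u)-C\Lambda T(u)^\beta>0
\qquad\text{and}\qquad
P(u)\ge \tfrac{s}{2}T(u)-C\Lambda T(u)^\beta>0,
\]
provided $k$ is small and $u\neq0$. Note that $T(u)>0$ on $\mathcal S(a)$, since $u\ne 0$ is not constant. This gives $J,P>0$ on $A$, and in particular $\sup_A J>0$.

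\textbf{Step 3: comparison of $A$ and $B$.} For $u\in A_k$ and $v\in B_k$, the fact that the Choquard term is nonnegative by \ref{f2} gives
\[
J(v)-J(u)\ge \tfrac14 T(v)-\tfrac12 T(u)-C\Lambda T(v)^\beta\ge k-\tfrac{k}{2}-C\Lambda(4k)^\beta>0
\]
for $k$ small. Taking $K(a)=k$ and passing to the infimum over $B$ and the supremum over $A$ yields $\sup_A J\le\inf_B J$. The main obstacle is Step 1: we need the Trudinger--Moser bound to be uniform on $\{T\le 4k\}\cap\mathcal S_r(a)$. This is exactly where the mass restriction (critical case) enters, and the constant $C(a)$ must be kept independent of $u$.
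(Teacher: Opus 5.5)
Your proposal is correct and follows essentially the same route as the paper. Both arguments bound the two Choquard terms by superlinear powers of $|\nabla u|_2^2+[u]_s^2$, using Hardy--Littlewood--Sobolev, the growth estimates, H\"older, a uniform Trudinger--Moser bound and Gagliardo--Nirenberg, and then take $K(a)$ small so that $J,P>0$ on $A$ and $J(v)-J(u)\ge \frac{K(a)}{2}-C\Lambda(4K(a))^{\beta}>0$ for $u\in A$, $v\in B$. Your explicit treatment of uniformity on $\{T\le 4k\}$ and of $T(u)>0$ on $\mathcal S(a)$ tidies up points the paper leaves implicit; note, however, that the mass restriction is not actually needed for this lemma, since applying Proposition~\ref{PRa} to a fixed rescaling of $u$ gives a uniform bound once $|\nabla u|_2^2$ is small, for any $a$.
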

				\begin{proof}
					Suppose $u\in \mathcal{S}_r(a)$ be such that $|\nabla u|_2^2+[u]_s^2=k$. Following the arguments of Lemma \ref{geo_mixed}, we can find $t,t'>1$ satisfying $\frac{1}{t}+\frac{1}{t'}=1$ with $t'$ close to $1$ such that 
					$$\int_{\mathbb{R}^2}(I_{\alpha}*F(u))F(u)\leq C_1\eps^2 |u|_{\frac{4(\tau+1)}{2+\alpha}}^{2(\tau+1)}+C_2\kappa_{\eps}^2 |u|_{\frac{4qt}{2+\alpha}}^{2q},$$
					moreover, by the following Gagliardo-Nirenberg inequality \cite[Theorem 1.3.7]{Cazenave2003}:
					\begin{equation}\label{1eq}
						|u|_{p}\leq C(p)|\nabla u|_2^{\theta}|u|_2^{1-\theta}  \leq C(p)\left(|\nabla u|_2^2+[u]_s^2\right)^{\theta}|u|_2^{1-\theta}\text{ for all } p\geq 2, \text{ with } \theta=1-\frac{2}{p};
					\end{equation}
					we get
					\begin{equation}\label{Lemma3.2_F}
						\int_{\mathbb{R}^2}(I_{\alpha}*F(u))F(u) \leq C_3 \eps^2 a^{2+\alpha}k^{\frac{2\tau-\alpha}{2}}+C_4\kappa_{\eps}^2a^{\frac{2+\alpha}{t}}k^{\frac{2qt-2-\alpha}{2t}}.
					\end{equation}
					Similarly, by \eqref{snonb}, \eqref{nonb} and following the arguments of Lemma \ref{geo_mixed} we get:
					\begin{eqnarray}\label{Lemma3.2_f(u)u}
						\int_{\mathbb{R}^2}(I_{\alpha}*F(u))f(u)u & \leq & C_5 \left(\int_{\mathbb{R}^2}|F(u)|^{\frac{4}{2+\alpha}} \right)^{\frac{2+\alpha}{4}}\left(\int_{\mathbb{R}^2}|f(u)u|^{\frac{4}{2+\alpha}}\right)^{\frac{2+\alpha}{4}}\nonumber\\
						& \leq & C_5\left(\int_{\mathbb{R}^2}|\eps|u|^{\tau+1}+\kappa_{\eps}^2|u|^q\left(e^{\gamma |u|^2}-1\right)|^{\frac{4}{2+\alpha}}\right)^{\frac{2+\alpha}{2}} \nonumber \\
						& \leq & C_6\eps^2 a^{2+\alpha}k^{\frac{2\tau-\alpha}{2}}+C_7\kappa_{\eps}^2a^{\frac{2+\alpha}{t}}k^{\frac{2qt-2-\alpha}{2t}}.
					\end{eqnarray}
					By \eqref{Lemma3.2_F} and \eqref{Lemma3.2_f(u)u} we get:
					\begin{eqnarray*}
						J(u) & = & \frac{|\nabla u|_2^2}{2}+\frac{[u]_s^2}{4}-\frac{\Lambda}{2}\int_{\mathbb{R}^2}(I_{\alpha}*F(u))F(u)dx\\
						& \geq & \frac{k}{4}-\left(\frac{\Lambda}{2}C_3\eps^2a^{2+\alpha}\right)k^{\frac{2\tau-\alpha}{2}}-\left(\frac{\Lambda}{2}C_4\kappa_{\eps}^2a^{\frac{2+\alpha}{t}}\right)k^{\frac{2qt-2-\alpha}{2t}},
					\end{eqnarray*}
					and \begin{eqnarray*}
						P(u) & = & |\nabla u|_2^2+\frac{s}{2}[u]_s^2 +\Lambda\left(\frac{2+\alpha}{2}\right)\int_{\mathbb{R}^2}(I_{\alpha}*F(u))F(u)dx -\Lambda \int_{\mathbb{R}^2}(I_{\alpha}*F(u))f(u)udx\\
						& \geq & \frac{s}{2}k -\left({\Lambda}C_6\eps^2a^{2+\alpha}\right)k^{\frac{2\tau-\alpha}{2}}-\left({\Lambda}C_7\kappa_{\eps}^2a^{\frac{2+\alpha}{t}}\right)k^{\frac{2qt-2-\alpha}{2t}}.
					\end{eqnarray*}
					Now, since $t'$ is close to $1$, clearly we can consider $t>2$, which implies that $\frac{2\tau-\alpha}{2}>1$ and $\frac{2qt-2-\alpha}{2t}>1$. Hence we can find $K(a)>0$ such that
					$$J(u) \text{ and } P(u)>0 \text{ for all } k\in [0,K(a)].$$
					Therefore, considering $A=A_{K(a)}$, we get $J(u)>0$ and $P(u)>0$ for all $u\in A$. 
					
					\noindent{ Further suppose, $u_1\in A$ and $u_2\in B$, then as done above in \eqref{Lemma3.2_F} we obtain
						$$\int_{\mathbb{R}^2}(I_{\alpha}*F(u_2))F(u_2)\leq (C_3 \eps^2 a^{2+\alpha})(4K(a))^{\frac{2\tau-\alpha}{2}}+(C_4\kappa_{\eps}^2a^{\frac{2+\alpha}{t}})(4K(a))^{\frac{2qt-2-\alpha}{2t}}.$$
						Now this gives us
						\begin{eqnarray*}
							J(u_2)-J(u_1) & \geq & \frac{|\nabla u|_2^2}{2}+\frac{[u]_s^2}{4} -\frac{|\nabla u_1|_2^2}{2}-\frac{[u_1]_s^2}{4} \\
							&& -\frac{\Lambda}{2}\int_{\mathbb{R}^2}(I_{\alpha}*F(u_2))F(u_2) +\frac{\Lambda}{2}\int_{\mathbb{R}^2}(I_{\alpha}*F(u_1))F(u_1)\\
							&\geq & \frac{|\nabla u_2|_2^2+[u_2]_s^2}{4} - \frac{|\nabla u_1|_2^2+[u_1]_s^2}{2}- \frac{\Lambda}{2}\int_{\mathbb{R}^2}(I_{\alpha}*F(u_2))F(u_2)+0\\
							& \geq & \frac{4K(a)}{4}-\frac{K(a)}{2} -\left(\frac{\Lambda}{2}C_3'\eps^2a^{2+\alpha}\right)K(a)^{\frac{2\tau-\alpha}{2}}-\left(\frac{\Lambda}{2}C_4'\kappa_{\eps}^2a^{\frac{2+\alpha}{t}}\right)K(a)^{\frac{2qt-2-\alpha}{2t}}\\
							& = &  \frac{K(a)}{2}-\left(\frac{\Lambda}{2}C_3'\eps^2a^{2+\alpha}\right)K(a)^{\frac{2\tau-\alpha}{2}}-\left(\frac{\Lambda}{2}C_4'\kappa_{\eps}^2a^{\frac{2+\alpha}{t}}\right)K(a)^{\frac{2qt-2-\alpha}{2t}}
						\end{eqnarray*}
					}
					Since $\tau>3$, $q$ and $t>2$, choosing $K(a)$ sufficiently small, we obtain
					\[
					\frac{K(a)}{2}-\left(\frac{\Lambda}{2}C_3'\eps^2a^{2+\alpha}\right)K(a)^{\frac{2\tau-\alpha}{2}}-\left(\frac{\Lambda}{2}C_4'\kappa_{\eps}^2a^{\frac{2+\alpha}{t}}\right)K(a)^{\frac{2qt-2-\alpha}{2t}}>0				\]
					which proves the desired result.
					
				\end{proof}
			} 
			
			\noindent As a direct consequence of Lemma \ref{P1_mixed}, we have the following corollary.
			
			\begin{corollary}\label{newcor_mixed}
				Assume that \ref{f1}-\ref{f2} hold. Let $u\in \mathcal{S}_{r}(a)$. Then, 
				\[
				J_*:=
				\inf
				\left\{
				J(u):
				u\in\mathcal S_r(a),\ 
				\int_{\mathbb R^2}|\nabla u|^2\,dx+[u]_s^2={ K(a)}
				\right\}
				>0.
				\]
			\end{corollary}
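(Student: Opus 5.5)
The plan is to reuse, for the level set $\{|\nabla u|_2^2+[u]_s^2=K(a)\}$, the lower bound obtained in the proof of Lemma \ref{P1_mixed}, and to observe that it is uniform in $u$. Fix $u\in\mathcal S_r(a)$ with $|\nabla u|_2^2+[u]_s^2=K(a)$. First, since $|\nabla u|_2^2\le K(a)$ and $|u|_2=a$, we have $\|u\|^2\le K(a)+a^2$, a bound independent of $u$. This lets us fix once and for all $\gamma$ (small in the subcritical case by \ref{f5}, close to $\gamma_0$ in the critical case by \ref{f6}) and $t'>1$ close to $1$. With these choices the unified Trudinger--Moser condition $\gamma t'\|u\|^2<(2+\alpha)\pi$ holds, and Proposition \ref{mixedTM} gives a constant $C$ depending only on $a$ and $K(a)$ in the estimate \eqref{1Domina1_mixed}. (In the critical case this uses the smallness of $K(a)$ together with the mass restriction of Theorem \ref{T2}, as discussed at the end of Section \ref{preliminaries}.)

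Next, combining the Hardy--Littlewood--Sobolev inequality (Proposition \ref{PRc}), the growth bounds \eqref{snona}/\eqref{nona}, and the Gagliardo--Nirenberg inequality \eqref{1eq}, we obtain \eqref{Lemma3.2_F} with $k=K(a)$:
\[
\int_{\mathbb R^2}(I_\alpha*F(u))F(u)\,dx\le C_3\eps^2a^{2+\alpha}K(a)^{\frac{2\tau-\alpha}{2}}+C_4\kappa_\eps^2a^{\frac{2+\alpha}{t}}K(a)^{\frac{2qt-2-\alpha}{2t}}=:M(a).
\]
Since $[u]_s^2\le 2\cdot\frac{[u]_s^2}{2}$, we have $\frac{|\nabla u|_2^2}{2}+\frac{[u]_s^2}{4}\ge \frac{1}{4}\bigl(|\nabla u|_2^2+[u]_s^2\bigr)=\frac{K(a)}{4}$. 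Hence
\[
J(u)\ge \frac{K(a)}{4}-\frac{\Lambda}{2}M(a)=:\delta(a),
\]
and the right-hand side depends only on $a$, $\Lambda$ and $K(a)$, not on $u$.

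It then remains to check that $\delta(a)>0$. Both exponents $\frac{2\tau-\alpha}{2}$ and $\frac{2qt-2-\alpha}{2t}$ exceed $1$, because $\tau>3$ and $t>2$. So for $K(a)$ small (possibly shrinking the constant from Lemma \ref{P1_mixed}, which does not affect its conclusions) the linear term dominates and $\delta(a)>0$. This is exactly the inequality already established in Lemma \ref{P1_mixed}. Taking the infimum over the constraint set then gives $J_*\ge\delta(a)>0$. The set is nonempty: by Lemma \ref{geo_mixed} and continuity of $\sigma\mapsto e^{2\sigma}|\nabla u|_2^2+e^{2s\sigma}[u]_s^2$, every $u\in\mathcal S_r(a)$ can be rescaled by $H(u,\sigma)$ onto it. So the infimum is a genuine positive number rather than $+\infty$.

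The main obstacle is the first step. The Trudinger--Moser constant must be uniform over the whole level set, so one has to verify that a single choice of $\gamma,t'$ satisfies $\gamma t'(K(a)+a^2)<(2+\alpha)\pi$. Everything else is a direct rereading of the proof of Lemma \ref{P1_mixed} with $k=K(a)$.
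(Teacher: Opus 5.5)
Your proposal is correct and is essentially the paper's own argument. The paper likewise evaluates the lower bound $J(u)\ge \frac{K(a)}{4}-\frac{\Lambda}{2}\bigl(C_3\eps^2a^{2+\alpha}K(a)^{\frac{2\tau-\alpha}{2}}+C_4\kappa_\eps^2a^{\frac{2+\alpha}{t}}K(a)^{\frac{2qt-2-\alpha}{2t}}\bigr)=:\tilde\delta>0$ from the proof of Lemma \ref{P1_mixed} at $k=K(a)$, notes that it does not depend on $u$, and takes the infimum. Your extra checks (the uniform Trudinger--Moser constant via $\|u\|^2\le K(a)+a^2$, and nonemptiness of the constraint set) fill in details the paper leaves implicit rather than giving a different route.
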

			
			\begin{proof}
				Arguing as in the proof of Lemma \ref{P1_mixed}, we obtain 
				$$J(u)\geq \frac{K(a)}{4}-\left(\frac{\Lambda}{2}C_3\eps^2a^{2+\alpha}\right)K(a)^{\frac{2\tau-\alpha}{2}}-\left(\frac{\Lambda}{2}C_4\kappa_{\eps}^2a^{\frac{2+\alpha}{t}}\right)K(a)^{\frac{2qt-2-\alpha}{2t}}=\tilde\delta>0,$$
				for every $u\in \mathcal S_r(a)$ such that $$|\nabla u|_2^2 +[u]_s^2=K(a).$$
				Hence,
				\begin{eqnarray*}
					J_* & = & \inf\left\{
					J(u):
					u\in\mathcal S_r(a),\ 
					|\nabla u|_2^2+[u]_s^2={ K(a)}
					\right\}\\
					& \geq & \tilde\delta>0.
				\end{eqnarray*}
				This completes the proof.
			\end{proof}
			\noindent	We are now in a position to define the minimax level. 
			Following the ideas of Jeanjean~\cite{Jeanjean1997}, we define the class of admissible paths by
			\[
			\Gamma:=
			\left\{
			h\in C([0,1],\mathcal{S}_{r}(a)): {h(0)\in A}\text{ and } {J(h(1))<0}
			\right\},
			\]
			where $A$ is as defined in Lemma \ref{P1_mixed}, and the corresponding mountain-pass level by
			\[
			\gamma(a):=\inf_{h\in\Gamma}\max_{t\in[0,1]}J(h(t)).
			\]
			We claim that \(\gamma(a)>0\). Indeed, let \(h\in\Gamma\) be arbitrary and
			define
			\[
			\phi_h(t):=|\nabla h(t)|_2^2+[h(t)]_s^2,
			\qquad t\in[0,1].
			\]
			Since \(h\in\Gamma\), we have \(h(0)\in A\), and hence
			\[
			\phi_h(0)=|\nabla h(0)|_2^2+[h(0)]_s^2\leq K(a).
			\]
			Also since $J(h(1))<0$, we must have $\phi_h(1)>K(a)$, otherwise we will contradict Lemma \ref{P1_mixed}.
			Therefore, by continuity of $h$ there exists
			\(t_0\in [0,1]\) such that
			\[
			|\nabla h(t_0)|_2^2+[h(t_0)]_s^2={K(a)}.
			\]
			Consequently, by Corollary \ref{newcor_mixed} we get
			\[
			\max_{t\in[0,1]}J(h(t))
			\geq J(h(t_0))
			\geq J_*>0.
			\]
			Since \(h\in\Gamma\) was arbitrary, we conclude that
			\[
			\gamma(a)
			=
			\inf_{h\in\Gamma}\max_{t\in[0,1]}J(h(t))
			\geq J_*>0.
			\]
			Moreover, we will see that $\gamma(a)$ is equal to the infimum of the functional $J$ over the Poho\v{z}aev manifold, for that, we first prove the following lemma.
			\begin{lemma}\label{uni-mixed}
				Assume that \ref{f1}-\ref{f2} and \ref{f4} hold, and let $u\in \mathcal{S}_{r}(a)$. Then the functional
				\[
				\widetilde{J}_{u}(\sigma):=J(H(u,\sigma)), \qquad \sigma\in \mathbb{R},
				\]
				admits a unique maximum point $\sigma(u)\in \mathbb{R}$ such that
				\[
				H(u,\sigma(u))\in \mathcal{P}(a).
				\]
				Moreover, for $v\in \mathcal{P}(a)$, $\sigma(v)=0.$
			\end{lemma}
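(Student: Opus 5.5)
Write $\Phi(w):=\int_{\mathbb{R}^2}(I_\alpha*F(w))F(w)\,dx$. By the scaling identities, for $u\in\mathcal S_r(a)$
\[
\widetilde J_u(\sigma)=\frac{e^{2\sigma}}{2}|\nabla u|_2^2+\frac{e^{2s\sigma}}{4}[u]_s^2-\frac{\Lambda}{2}e^{-(2+\alpha)\sigma}\Phi(e^\sigma u).
\]
The plan has three steps: show $\widetilde J_u'(\sigma)=P(H(u,\sigma))$, get a maximum from Lemma \ref{geo_mixed}, and prove uniqueness from \ref{f4}.

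\textbf{Step 1: derivative and existence.} Differentiation under the integral is justified by the $C^1$ character of $\Phi$ established after \eqref{J}. With $\frac{d}{d\sigma}F(e^\sigma u)=f(e^\sigma u)e^\sigma u$ and $v=H(u,\sigma)$ we get
\[
\widetilde J_u'(\sigma)=e^{2\sigma}|\nabla u|_2^2+\frac{s}{2}e^{2s\sigma}[u]_s^2+\Lambda\frac{2+\alpha}{2}\Phi(v)-\Lambda\int_{\mathbb{R}^2}(I_\alpha*F(v))f(v)v\,dx=P(H(u,\sigma)).
\]
The last two terms recombine because $F(e^\sigma u(e^\sigma x))$ rescaled against $I_\alpha$ reproduces the factor $e^{-(2+\alpha)\sigma}$. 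Hence critical points of $\widetilde J_u$ are exactly the $\sigma$ with $H(u,\sigma)\in\mathcal P(a)$; note $H(u,\sigma)\in\mathcal S_r(a)$ because the scaling preserves the $L^2$ norm and radiality. By Lemma \ref{geo_mixed}, $\widetilde J_u\to0$ as $\sigma\to-\infty$ and $\widetilde J_u\to-\infty$ as $\sigma\to+\infty$. Lemma \ref{P1_mixed} gives $\widetilde J_u(\sigma)>0$ for $\sigma$ very negative, since then $H(u,\sigma)\in A$. So $\widetilde J_u$ has a positive global maximum at some $\sigma(u)$, and there $P(H(u,\sigma(u)))=0$.

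\textbf{Step 2: uniqueness of the critical point (the main obstacle).} Put $t=e^\sigma$ and $\beta=\frac{2+\alpha}{2}$. The condition $\widetilde J_u'=0$ becomes
\[
|\nabla u|_2^2+\frac{s}{2}t^{2s-2}[u]_s^2=\Lambda\, t^{-(4+\alpha)}\int_{\mathbb{R}^2}(I_\alpha*F(tu))\,\bigl(f(tu)tu-\beta F(tu)\bigr)\,dx.
\]
The left side is nonincreasing in $t$, strictly decreasing if $[u]_s\ne0$. I will show the right side is strictly increasing, so the two sides cross at most once.
\begin{itemize}
\item Pointwise, \ref{f4} gives that $\tilde F(tu)/t^{\beta+1}$ is nondecreasing in $t>0$ on both signs of $u$. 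Note $\beta+1=2+\frac{\alpha}{2}$.
\item Using \ref{f2}, $t\mapsto F(tu)/t^{\theta}$ is nondecreasing with $\theta>2+\frac{\alpha}{2}$. Hence $F(tu)/t^{\beta+1}$ is increasing.
\item Positivity of $I_\alpha$ then makes the double integral over $t^{2(\beta+1)}=t^{4+\alpha}$ monotone. Strictness comes from the strict growth given by \ref{f2}.
\end{itemize}
The delicate point is making the exponent bookkeeping between the $\tilde F$ factor and the $F$ factor match \ref{f4}. If that is too tight, the fallback is to argue directly: any critical point is a strict local maximum, because $\widetilde J_u''<0$ wherever $\widetilde J_u'=0$ by the same monotonicity. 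Two strict local maxima would force a local minimum in between, which is also a critical point and gives a contradiction.

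\textbf{Step 3: the case $v\in\mathcal P(a)$.} Since $H(v,0)=v$, Step 1 gives $\widetilde J_v'(0)=P(v)=0$. By the uniqueness in Step 2, $\sigma(v)=0$.
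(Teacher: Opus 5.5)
Your argument is correct and is essentially the paper's proof. Like the paper, you identify $\widetilde J_u'(\sigma)=P(H(u,\sigma))$, factor out $e^{2\sigma}$, use \ref{f2} and \ref{f4} to show the Choquard term (rescaled by $e^{-(2+\frac{\alpha}{2})\sigma}$ in each factor) is strictly increasing so the critical point is unique, get existence from Lemmas \ref{geo_mixed} and \ref{P1_mixed}, and conclude $\sigma(v)=0$ from $P(v)=0$.

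The exponent bookkeeping you were unsure about matches exactly, since $t^{-(4+\alpha)}=\bigl(t^{-(2+\frac{\alpha}{2})}\bigr)^2$, so you can drop the fallback, which would in any case need $\widetilde J_u''$ and that is not available for merely continuous $f$. The one point worth adding is that $\tilde F\geq(\theta-1-\frac{\alpha}{2})F>0$ by \ref{f2}: nonnegativity of both factors is what makes their product monotone.
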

			{\begin{proof}
					Let $u\in S_r(a)$ and $\sigma\in \mathbb{R}$, then we have:
					\begin{eqnarray*}
						\tilde{J}(\sigma) & = & J(H(u,\sigma)) \\
						& = & \frac{|\nabla H(u,\sigma)|_2^2}{2}+\frac{[H(u,\sigma)]_s^2}{4}-\frac{\Lambda}{2}\int_{\mathbb{R}^2}(I_{\alpha}*F(H(u,\sigma))F(H(u,\sigma))dx\\
						& = & \frac{e^{2\sigma}}{2}|\nabla u |_2^2 +\frac{e^{2s\sigma}}{4}[u]_s^2 -\frac{\Lambda}{2e^{(2+\alpha)\sigma}}\int_{\mathbb{R}^2}(I_{\alpha}*F(e^\sigma u))F(e^{\sigma}u).
					\end{eqnarray*}
					thus,
					\begin{eqnarray*}
						\tilde{J}'_u(\sigma) & = & e^{2\sigma}|\nabla u|_2^2+\frac{se^{2s\sigma}}{2}[u]_s^2-\frac{\Lambda}{e^{(2+\alpha)\sigma}}\int_{\mathbb{R}^2}(I_{\alpha}*F(e^{\sigma}u)\tilde{F}(e^{\sigma}u)\\
						& = &e^{2\sigma}\left(|\nabla u|_2^2 +\frac{s}{2}e^{2(s-1)\sigma}[u]_s^2-\frac{\Lambda}{e^{(4+\alpha)\sigma}}\int_{\mathbb{R}^2}(I_{\alpha}*F(e^{\sigma}u)\tilde{F}(e^{\sigma}u)\right)\\
						& = & e^{2\sigma}\phi(\sigma),
					\end{eqnarray*}
					where 
					\begin{eqnarray*}
						\phi(\sigma) & := & |\nabla u|_2^2 +\frac{s}{2}e^{2(s-1)\sigma}[u]_s^2-\frac{\Lambda}{e^{(4+\alpha)\sigma}}\int_{\mathbb{R}^2}(I_{\alpha}*F(e^{\sigma}u)\tilde{F}(e^{\sigma}u)\\
						& = & |\nabla u|_2^2 +\frac{s}{2}e^{2(s-1)\sigma}[u]_s^2-\Lambda \int_{\mathbb{R}^2}\left(I_{\alpha}*\frac{F(e^{\sigma }u)}{e^{(2+\frac{\alpha}{2})\sigma}}\right)\frac{\tilde{F}(e^{\sigma}u)}{e^{(2+\frac{\alpha}{2})\sigma}}dx.
					\end{eqnarray*}
					Now, since for any fixed $t\neq 0$ by \ref{f2} and \ref{f4}, 
					$$\zeta\mapsto \frac{F(\zeta t)}{\zeta^{2+\frac{\alpha}{2}}}\text{ is strictly increasing for } \zeta\in (0,\infty),$$
					$$\zeta\mapsto \frac{\tilde{F}(\zeta t)}{\zeta^{2+\frac{\alpha}{2}}}\text{ is non decreasing for } \zeta\in (0,\infty),$$
					and $e^{\sigma}\in (0,\infty)$ for all $\sigma\in \mathbb{R}$, we get 
					$$\sigma\mapsto -\Lambda \int_{\mathbb{R}^2}\left(I_{\alpha}*\frac{F(e^{\sigma }u)}{e^{(2+\frac{\alpha}{2})\sigma}}\right)\frac{\tilde{F}(e^{\sigma}u)}{e^{(2+\frac{\alpha}{2})\sigma}}dx \text{ is strictly decreasing in }\mathbb{R}.$$
					Further, since $s\in (0,1)$, it turns out that $\phi$ is strictly decreasing in $\mathbb{R}$. Thus, it can vanish at most once. Therefore, $\tilde{J}_u$ can have at most one critical point. 
					
					\noindent Moreover, by continuity of the exponential function, we can always find $\sigma_0$ such that
					$$|\nabla H(u,\sigma_0)|_2^2+[H(u,\sigma_0)]_s^2 = e^{2\sigma_0}|\nabla u|_2^2 +e^{2s\sigma_0}[u]_s^2 =4 K(a)\Rightarrow H(u,\sigma_0)\in B,$$
					then by Lemma \ref{geo_mixed} and \ref{P1_mixed}, we get
					$$\tilde{J}_u(\sigma)\rightarrow 0 \text{ as } \sigma \rightarrow -\infty;\; \tilde{J}_u(\sigma)\rightarrow -\infty \text{ as } \sigma \rightarrow+\infty \text{ and } \tilde{J}_{u}(\sigma_0)>0.$$
					Thus, $\tilde{J}_u$ must have at least one critical point. \\
					\noindent Therefore, $\tilde{J}_u$ has the following curvature with the  unique critical point $\sigma(u)$.
					\begin{center}
						\begin{tikzpicture}
							\draw[thick,<->] (-4,0)--(4,0);
							\draw[thick,->](-4,0.5).. controls (2,2) .. (3.5,-2) node[anchor= south west]{$\tilde{J}_u$};
							\draw [dash dot] (1.25,1.3)--(1.25,0) node[anchor=north]{$\sigma(u)$};
							\draw[dash dot] (4,0)--(4,0) node[anchor=north]{$\sigma \rightarrow \infty$};
						\end{tikzpicture}
					\end{center}
					Clearly, $\sigma(u)$ corresponds to the global maximum of $\tilde{J}_u$. Further, since
					\begin{equation*}
						P(H(u,\sigma(u)) =  \tilde{J}'_{u}(\sigma(u))=0,
					\end{equation*}
					we get $H(u,\sigma(u))\in \mathcal{P}(a)$. Moreover, if $v\in \mathcal{P}(a)$, then
					\begin{equation*}
						\tilde{J}'_v(0) =  P(H(v,0))= P(v)=0,
					\end{equation*}
					thus $\sigma(v)=0.$
			\end{proof}}
		}
		
		\begin{lemma}\label{fin-mixed}
			Assume that \ref{f1}-\ref{f2} and \ref{f4} hold. Then
			\(
			\gamma(a)=m(a),
			\)
			where
			\[
			m(a):=\inf_{u\in\mathcal P(a)}J(u).
			\]
			Moreover,
			\[
			m(a)=
			\inf_{u\in\mathcal P(a)}
			\max_{\sigma\in\mathbb R}J(H(u,\sigma)).
			\]
		\end{lemma}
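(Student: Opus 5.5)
The plan is to prove the second formula first and then compare it with $\gamma(a)$ through two inequalities. Throughout, $\mathcal P(a)$ is understood inside the radial class $\mathcal S_r(a)$, since that is where the paths of $\Gamma$ live and where Lemma \ref{uni-mixed} is stated.

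\textbf{Step 1: the second formula.} By Lemma \ref{uni-mixed}, every $v\in\mathcal P(a)$ has $\sigma(v)=0$, and $\sigma=0$ is the unique global maximum of $\widetilde J_v$. Hence
\[
\max_{\sigma\in\mathbb R}J(H(v,\sigma))=J(v),
\]
and taking the infimum over $\mathcal P(a)$ gives $m(a)=\inf_{u\in\mathcal P(a)}\max_{\sigma}J(H(u,\sigma))$.

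\textbf{Step 2: $\gamma(a)\le m(a)$.} Fix $u\in\mathcal P(a)$. By Lemma \ref{geo_mixed}(i), $|\nabla H(u,\sigma)|_2^2+[H(u,\sigma)]_s^2\to0$ as $\sigma\to-\infty$, so there is $\sigma_1\ll0$ with $H(u,\sigma_1)\in A$. By Lemma \ref{geo_mixed}(ii), there is $\sigma_2\gg0$ with $J(H(u,\sigma_2))<0$. The map $\sigma\mapsto H(u,\sigma)$ is continuous from $\mathbb R$ into $H^1$, preserves the $L^2$ norm and preserves radial symmetry. So the path
\[
h(t)=H\bigl(u,(1-t)\sigma_1+t\sigma_2\bigr)
\]
belongs to $\Gamma$. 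Therefore
\[
\gamma(a)\le\max_t J(h(t))\le\max_\sigma J(H(u,\sigma))=J(u),
\]
and taking the infimum over $u$ gives $\gamma(a)\le m(a)$.

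\textbf{Step 3: $\gamma(a)\ge m(a)$.} I will show that every $h\in\Gamma$ crosses $\mathcal P(a)$. Since $h(0)\in A$, Lemma \ref{P1_mixed} gives $P(h(0))>0$. For $h(1)$ I use the identity
\[
J(u)-\tfrac12P(u)=\tfrac14(1-s)[u]_s^2+\tfrac{\Lambda}{2}\int_{\mathbb R^2}(I_\alpha*F(u))\Bigl(f(u)u-\tfrac{4+\alpha}{2}F(u)\Bigr)dx.
\]
By \ref{f2}, $f(t)t\ge\theta F(t)$ with $\theta>2+\frac{\alpha}{2}$, and $F>0$ makes the convolution factor positive, so the integrand is nonnegative. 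Hence $J(u)-\tfrac12P(u)\ge0$ for every $u$. Since $J(h(1))<0$, this forces $P(h(1))\le 2J(h(1))<0$. The map $t\mapsto P(h(t))$ is continuous, because $P$ is continuous on $H^1$ by the same Trudinger--Moser and Hardy--Littlewood--Sobolev estimates that make $J\in C^1$. By the intermediate value theorem there is $t_0$ with $h(t_0)\in\mathcal P(a)$, so
\[
\max_t J(h(t))\ge J(h(t_0))\ge m(a).
\]
Taking the infimum over $h\in\Gamma$ gives $\gamma(a)\ge m(a)$.

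\textbf{Expected obstacle.} The delicate point is the sign $P(h(1))<0$, which rests on the identity above. That identity must be checked carefully against the precise definitions of $J$ and $P$, including the coefficient of $\frac{s}{2}[u]_s^2$. If the constants turned out not to match, my fallback would be the fibre picture from Lemma \ref{uni-mixed}: the sign of $P(u)=\widetilde J_u'(0)=\phi(0)$ tells whether $\sigma(u)>0$ or $\sigma(u)<0$. Then I would argue by continuity of $u\mapsto\sigma(u)$ that $\sigma(h(0))>0$ and $\sigma(h(1))<0$, and locate a $t$ with $\sigma(h(t))=0$.
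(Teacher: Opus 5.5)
Your proof is correct and follows the paper's argument: the fibre characterization of $m(a)$ via Lemma~\ref{uni-mixed}, the fibre path $t\mapsto H\bigl(u,(1-t)\sigma_1+t\sigma_2\bigr)\in\Gamma$ for $\gamma(a)\le m(a)$, and the intermediate value theorem applied to $t\mapsto P(h(t))$, with $P(h(0))>0$ and $P(h(1))<0$, for $\gamma(a)\ge m(a)$. The only local difference is how you get $P(h(1))<0$: your identity $J(u)-\frac12P(u)=\frac{1-s}{4}[u]_s^2+\frac{\Lambda}{2}\int_{\mathbb R^2}(I_\alpha*F(u))\bigl(f(u)u-\frac{4+\alpha}{2}F(u)\bigr)\,dx$ does match the definitions of $J$ and $P$, so your fallback is unnecessary, and it gives $P\le 2J$ from assumption \ref{f2} alone, whereas the paper deduces $J(w)<0\Rightarrow P(w)<0$ from the monotonicity of $\sigma\mapsto J(H(w,\sigma))$ established in Lemma~\ref{uni-mixed}.
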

		\begin{proof}
			We first observe that, by Lemma \ref{uni-mixed}, if \(v\in\mathcal P(a)\), then
			the map
			\[
			\sigma\mapsto J(H(v,\sigma))
			\]
			attains its unique maximum at \(\sigma=0\). Indeed, \(H(v,0)=v\in\mathcal P(a)\).
			Therefore,
			\[
			J(v)=\max_{\sigma\in\mathbb R}J(H(v,\sigma)).
			\]
			Taking the infimum over \(v\in\mathcal P(a)\), we obtain
			\[
			m(a)
			=
			\inf_{v\in\mathcal P(a)}J(v)
			=
			\inf_{v\in\mathcal P(a)}
			\max_{\sigma\in\mathbb R}J(H(v,\sigma)).
			\]
			We now prove that \(\gamma(a)\le m(a)\). Let \(v\in\mathcal P(a)\) be arbitrary.
			By Lemma \ref{geo_mixed}, we have
			\[
			J(H(v,\sigma))\to 0
			\quad\text{as }\sigma\to-\infty
			\]
			and
			\[
			J(H(v,\sigma))\to-\infty
			\quad\text{as }\sigma\to+\infty.
			\]
			Moreover, the scaling \(H(v,\sigma)\) preserves the \(L^2\)-norm, and hence
			\[
			H(v,\sigma)\in\mathcal S_r(a)
			\quad\text{for every }\sigma\in\mathbb R.
			\]
			Thus, we may choose \(\sigma_1<0<\sigma_2\) such that
			\[
			H(v,\sigma_1)\in A
			\quad\text{and}\quad
			J(H(v,\sigma_2))<0.
			\]
			Define
			\[
			h(t):=H(v,(1-t)\sigma_1+t\sigma_2),
			\qquad t\in[0,1].
			\]
			Then \(h\in\Gamma\). Hence, by the definition of \(\gamma(a)\),
			\[
			\gamma(a)
			\le
			\max_{t\in[0,1]}J(h(t)).
			\]
			Since \(h(t)\) is only a part of the fiber curve \(\sigma\mapsto H(v,\sigma)\), we have
			\[
			\max_{t\in[0,1]}J(h(t))
			\le
			\max_{\sigma\in\mathbb R}J(H(v,\sigma)).
			\]
			Using again that \(v\in\mathcal P(a)\), Lemma \ref{uni-mixed} gives
			\[
			\max_{\sigma\in\mathbb R}J(H(v,\sigma))=J(v).
			\]
			Therefore,
			\[
			\gamma(a)\le J(v).
			\]
			Taking the infimum over \(v\in\mathcal P(a)\), we obtain
			\[
			\gamma(a)\le \inf_{v\in\mathcal P(a)}J(v)=m(a).
			\]
			It remains to prove that \(\gamma(a)\ge m(a)\). Let \(h\in\Gamma\) be arbitrary.
			By the definition of \(\Gamma\), we have
			\[
			h(0)\in A
			\quad\text{and}\quad
			J(h(1))<0.
			\]
			By choosing \(K(a)>0\) sufficiently small in the definition of \(A\), the same
			estimates used in Lemma \ref{geo_mixed} imply that
			\[
			P(w)>0
			\quad\text{for every } w\in A.
			\]
			Hence,
			\[
			P(h(0))>0.
			\]
			On the other hand, we claim that
			\[
			J(w)<0 \quad \Longrightarrow \quad P(w)<0.
			\]
			Indeed, let \(w\in\mathcal S_r(a)\) and consider
			\[
			\varphi_w(\sigma):=J(H(w,\sigma)).
			\]
			By Lemma \ref{uni-mixed}, \(\varphi_w\) has a unique maximum point
			\(\sigma(w)\), and
			\[
			\varphi_w'(\sigma)=P(H(w,\sigma)).
			\]
			Moreover, \(\varphi_w(\sigma)\to0\) as \(\sigma\to-\infty\). If \(P(w)\ge0\),
			then \(0\) lies before or at the maximum point of \(\varphi_w\), and consequently
			\[
			J(w)=\varphi_w(0)\ge0,
			\]
			which contradicts \(J(w)<0\). Therefore \(P(w)<0\). Applying this to \(w=h(1)\),
			we get
			\[
			P(h(1))<0.
			\]
			Since \(h\) is continuous in \(\mathcal S_r(a)\) and \(P\) is continuous on
			\(\mathcal S_r(a)\), the map
			\[
			t\mapsto P(h(t))
			\]
			is continuous on \([0,1]\). Since
			\[
			P(h(0))>0
			\quad\text{and}\quad
			P(h(1))<0,
			\]
			the intermediate value theorem gives \(t_0\in(0,1)\) such that
			\[
			P(h(t_0))=0.
			\]
			Since \(h(t_0)\in\mathcal S_r(a)\), we have
			\[
			h(t_0)\in\mathcal P(a).
			\]
			Therefore,
			\[
			\max_{t\in[0,1]}J(h(t))
			\ge
			J(h(t_0))
			\ge
			\inf_{v\in\mathcal P(a)}J(v)
			=
			m(a).
			\]
			Taking the infimum over \(h\in\Gamma\), we obtain
			\[
			\gamma(a)\ge m(a).
			\]
			Combining
			\[
			\gamma(a)\le m(a)
			\quad\text{and}\quad
			\gamma(a)\ge m(a),
			\]
			we conclude that
			\[
			\gamma(a)=m(a).
			\]
		\end{proof}
		\subsection{{Analysis of the Palais-Smale Sequence}}\label{PSa}
		{In this section we will study the Palais- Smale sequence corresponding to $\gamma(a)$. Consider
			$$\tilde{\gamma}(a):= \inf_{\tilde{h}=(\tilde{h}_1,\tilde{h}_2)\in \tilde{\Gamma}}\max_{t\in [0,1]}J(H(\tilde{h}_1(t),\tilde{h}_2(t))),$$
			where $$\tilde{\Gamma} =\{\tilde{h}\in C([0,1], \mathcal{S}_r(a)\times \mathbb{R}): \tilde{h}(0)\in A\times \{0\} \text{ and } \tilde{h}(1)=(\tilde{h}_1(1),0) \text{ with } J(\tilde{h}_1(1))<0\}.$$
			Clearly, following the proof of \cite[Proposition~2.1]{Jeanjean1997} one can see that $\tilde{\gamma}(a)=\gamma(a)$. Hence, using the Ekeland variational principle as explained in \cite[Lemma~2.11]{Nidhi2025Normalized_Asymp} we get sequences $\{u_n\}\subset \mathcal{S}_r(a)$ and $\{\lambda_n\}\subset \mathbb{R}$ such that as $n\rightarrow +\infty$,
			\begin{equation}\label{eq_PSS}
				\left\{\begin{array}{ll}
					J(u_n) \rightarrow \gamma(a)=m(a)   & \text{ in } \mathbb{R},  \\
					J'(u_n)+\lambda_n u_n \rightarrow 0 & \text{ in } H^{-1}(\mathbb{R}^2),\\
					P(u_n) \rightarrow 0 & \text{ in }\mathbb{R}.
				\end{array}
				\right\}
			\end{equation}
		}
		\noindent In the following, we study whether the value $m(a)$ is achieved. 
		To this end, we first establish some properties of the $(PS)$ sequence $\{u_n\}$.
		
		\begin{lemma}\label{bounded}
			Assume that \ref{f1}-\ref{f2} holds. Let $u\in \mathcal{S}_{r}(a)$. Then the $(PS)$ sequence $\{u_{n}\}$ of $J$ is bounded in $H^{1}(\mathbb{R}^{2})$.
		\end{lemma}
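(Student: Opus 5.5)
Since $|u_n|_2^2=a^2$ for every $n$, it suffices to bound the seminorm part $|\nabla u_n|_2^2+[u_n]_s^2$. The plan is to combine the two scalar pieces of information in \eqref{eq_PSS}: $J(u_n)\to m(a)$ and $P(u_n)\to 0$. The aim is to eliminate the Choquard terms with \ref{f2}, so that what remains is a coercive quantity in the gradient and Gagliardo seminorms, bounded by $m(a)+o_n(1)$.

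\emph{Step 1: control of the nonlinear term by $P$.} By \ref{f2}, $f(t)t\ge\theta F(t)$ with $F>0$, and $I_\alpha>0$, so $(I_\alpha*F(u))>0$. Hence
\[
\int_{\mathbb R^2}(I_\alpha*F(u_n))f(u_n)u_n\,dx\ \ge\ \theta\int_{\mathbb R^2}(I_\alpha*F(u_n))F(u_n)\,dx .
\]
Writing $D_n:=\int_{\mathbb R^2}(I_\alpha*F(u_n))F(u_n)\,dx\ge0$ and inserting this into the definition \eqref{Pohozaev} of $P$ gives
\[
|\nabla u_n|_2^2+\tfrac{s}{2}[u_n]_s^2 = P(u_n)-\Lambda\tfrac{2+\alpha}{2}D_n+\Lambda\int(I_\alpha*F(u_n))f(u_n)u_n \ \ge\ o_n(1)+\Lambda\Big(\theta-\tfrac{2+\alpha}{2}\Big)D_n .
\]
Since $\theta>2+\frac{\alpha}{2}>\frac{2+\alpha}{2}$, this yields
\[
\Lambda D_n\le \frac{|\nabla u_n|_2^2+\frac{s}{2}[u_n]_s^2+o_n(1)}{\theta-\frac{2+\alpha}{2}} .
\]

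\emph{Step 2: insertion into $J$.} From $J(u_n)=m(a)+o_n(1)$ and Step 1,
\[
m(a)+o_n(1)\ \ge\ \frac{|\nabla u_n|_2^2}{2}+\frac{[u_n]_s^2}{4}-\frac{1}{2(\theta-\frac{2+\alpha}{2})}\Big(|\nabla u_n|_2^2+\tfrac{s}{2}[u_n]_s^2\Big)-o_n(1).
\]
Set $\beta:=\theta-\frac{2+\alpha}{2}$. Then $\beta>1$, because $\theta>2+\frac{\alpha}{2}$ means $\theta-1-\frac{\alpha}{2}>1$. The coefficient of $|\nabla u_n|_2^2$ is $\frac12(1-\frac1\beta)>0$, and the coefficient of $[u_n]_s^2$ is $\frac14-\frac{s}{4\beta}=\frac14(1-\frac{s}{\beta})>0$, since $s<1<\beta$. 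Therefore $|\nabla u_n|_2^2+[u_n]_s^2\le C(m(a)+1)$ for all large $n$, and together with $|u_n|_2=a$ the sequence is bounded in $H^1(\mathbb R^2)$.

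\emph{Main obstacle.} The delicate point is the sign and coefficient bookkeeping. Both positivity of $(I_\alpha*F(u))$ and the strict inequality $\beta>1$ are needed so that the mixed coefficients remain positive for every $s\in(0,1)$. We also use that $m(a)=\gamma(a)<\infty$, which holds because $\mathcal P(a)\neq\emptyset$ by the fibre construction. In this plan \ref{f4} is not needed.
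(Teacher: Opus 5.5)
Your proof is correct and is essentially the paper's argument. You apply \ref{f2} inside $P(u_n)$ first and then substitute into $J(u_n)$, whereas the paper forms $(2+\alpha)J(u_n)+P(u_n)$ before applying \ref{f2}; multiplying your final inequality by $2\beta=2\theta-2-\alpha$ gives exactly \eqref{boundAB_mixed}. The only property of the level you use is its finiteness, so write it as $\gamma(a)$ rather than $m(a)$, since the identification $\gamma(a)=m(a)$ in Lemma \ref{fin-mixed} relies on \ref{f4}, which you correctly note is otherwise unnecessary.
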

		\begin{proof}
			For simplicity, set $A_n:=|\nabla u_n|^2_2\,dx ;$ \quad $B_n:=[u_n]^2_s;$
			\[
			C_n:=\int_{\mathbb R^2}(I_\alpha*F(u_n))F(u_n)\,dx;
			\text{ and }
			D_n:=\int_{\mathbb R^2}(I_\alpha*F(u_n))f(u_n)u_n\,dx.
			\]
			Then \eqref{eq_PSS} become
			\begin{equation}\label{Jn_mixed_expand}
				\frac12 A_n+\frac14 B_n- \frac{\Lambda}{2} C_n=\gamma(a),
			\end{equation}
			and
			\begin{equation}\label{Pn_mixed_expand}
				A_n+\frac{s}{2}B_n+\Lambda\left(\frac{2+\alpha}{2}\right)C_n-\Lambda D_n=o_n(1).
			\end{equation}
			Now multiply \eqref{Jn_mixed_expand} by $(2+\alpha)$ and add the result to \eqref{Pn_mixed_expand}. We obtain
			\begin{equation}\label{combine1_mixed}
				\left(2+\frac{\alpha}{2}\right)A_n
				+\left(\frac{2+\alpha+2s}{4}\right)B_n
				- \Lambda D_n
				=
				(2+\alpha)\gamma(a)+o_n(1).
			\end{equation}
			On the other hand, by assumption \emph{\ref{f2}}, there exists $\theta>2+\frac{\alpha}{2}$ such that
			\[
			0<\theta F(t)\le f(t)t
			\qquad \text{for all } t\neq 0.
			\]
			Hence,
			\[
			D_n
			=
			\int_{\mathbb R^2}(I_\alpha*F(u_n))f(u_n)u_n\,dx
			\ge
			\theta\int_{\mathbb R^2}(I_\alpha*F(u_n))F(u_n)\,dx
			=
			\theta C_n.
			\]
			Using this estimate in \eqref{combine1_mixed}, we get
			\begin{equation}\label{combine2_mixed}
				\left(2+\frac{\alpha}{2}\right)A_n
				+\left(\frac{2+\alpha+2s}{4}\right)B_n
				-\Lambda\theta C_n
				\ge
				(2+\alpha)\gamma(a)+o_n(1).
			\end{equation}
			Next, from \eqref{Jn_mixed_expand}, we can express $C_n$ as
			\begin{equation}\label{Cn_expand_mixed}
				C_n=\frac{1}{\Lambda}\left(A_n+\frac12 B_n-2\gamma(a)\right)+o_n(1).
			\end{equation}
			Substituting \eqref{Cn_expand_mixed} into \eqref{combine2_mixed}, we obtain
			\begin{align*}
				&(2+\tfrac{\alpha}{2})A_n+\left(\frac{2+\alpha+2s}{4}\right)B_n
				-\theta\Big(A_n+\frac12 B_n-2\gamma(a)+o_n(1)\Big) \\
				&\hspace{4cm}\ge (2+\alpha)\gamma(a)+o_n(1).
			\end{align*}
			After rearranging, this yields
			\begin{equation}\label{boundAB_mixed}
				\left(\theta-2-\frac{\alpha}{2}\right)A_n
				+
				\left(\frac{2\theta-2-\alpha-2s}{4}\right)B_n
				\le
				\bigl(2\theta-2-\alpha\bigr)\gamma(a)+o_n(1).
			\end{equation}
			Since
			\(
			\theta>2+\frac{\alpha}{2},
			\)
			we have
			\(
			2\theta-2-\alpha-2s>0.
			\)
			Therefore, both coefficients on the left-hand side of \eqref{boundAB_mixed} are positive. It follows that $\{A_n\}$, $\{B_n\}$, and $\{C_n\}$ are bounded, that is,
			\[
			\int_{\mathbb R^2}|\nabla u_n|^2\,dx \le C,
			\qquad
			[u_n]_s^2\le C, \qquad \int_{\mathbb R^2}(I_\alpha*F(u_n))F(u_n)\,dx\leq C
			\]
			for some constant $C>0$ independent of $n$.
			
			\noindent	Finally, since $u_n\in \mathcal{S}_r(a)$, we also have
			\(
			|u_n|_2^2=a^2.
			\)
			Hence,
			\[
			\|u_n\|^2
			=
			|\nabla u_n|_2^2
			+\frac{[u_n]_s^2}{2}+|u_n|_2^2
			\]
			is bounded uniformly in $n$. Therefore, $\{u_n\}$ is bounded in $ H^1(\R^2)$.
		\end{proof}
		\noindent	By Proposition \ref{mixedTM},  we have the following Lemma.
		\begin{lemma}\label{TMineq}
			Let $\{u_n\}\subset S_r(a)$ be a sequence satisfying 
			\[
			\limsup_{n\to +\infty}\gamma \|u_n\|^2<(2+\alpha)\pi.
			\]
			Then there exist $t>1$, sufficiently close to $1$, and a constant $C>0$ such that
			\[
			\int_{\mathbb R^{2}}\bigl(e^{\gamma |u_n|^{2}}-1\bigr)^{t}\,dx\le C,
			\qquad \forall ~~n\in\mathbb N.
			\]
		\end{lemma}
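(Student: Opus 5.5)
The plan is to reduce the estimate to Proposition~\ref{mixedTM}(ii) by means of the elementary inequality already used in the paper,
\[
(e^{s}-1)^{m}\le C_m\bigl(e^{ms}-1\bigr), \qquad m>1,\ s\ge 0.
\]
We apply it with $s=\gamma|u_n|^2$ and $m=t$. This gives, pointwise,
\[
\bigl(e^{\gamma|u_n|^2}-1\bigr)^{t}\le C_t\bigl(e^{t\gamma|u_n|^2}-1\bigr).
\]
So it suffices to bound $\int_{\mathbb R^2}(e^{t\gamma|u_n|^2}-1)\,dx$ uniformly in $n$.

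\textbf{Choosing $t$.} By hypothesis there are $\delta>0$ and $n_0$ such that
\[
\gamma\|u_n\|^2\le (2+\alpha)\pi-\delta \qquad \text{for } n\ge n_0.
\]
Since $\alpha<2$, we have $(2+\alpha)\pi<4\pi$. Hence we can fix $t>1$ so close to $1$ that
\[
\beta_0:=t\bigl((2+\alpha)\pi-\delta\bigr)<4\pi .
\]
Then $t\gamma\|u_n\|^2\le \beta_0<4\pi$ for all $n\ge n_0$.

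\textbf{Uniform bound for $n\ge n_0$.} This is the step needing care. Proposition~\ref{mixedTM}(ii) as stated gives a constant $C_\beta$ that depends on $\beta=t\gamma\|u_n\|^2$, which varies with $n$. To make it uniform, write $v_n=u_n/\|u_n\|$, so $\|v_n\|=1$ (the case $u_n=0$ cannot occur since $|u_n|_2=a$). Then
\[
e^{t\gamma|u_n|^2}-1=e^{t\gamma\|u_n\|^2|v_n|^2}-1\le e^{\beta_0|v_n|^2}-1 ,
\]
using monotonicity of the exponent. Now $|\nabla v_n|_2\le 1$ and $|v_n|_2\le 1$, so Proposition~\ref{PRa} with the single exponent $\beta_0<4\pi$ and $M=1$ gives
\[
\int_{\mathbb R^2}\bigl(e^{\beta_0|v_n|^2}-1\bigr)\,dx\le \mathcal C(1,\beta_0),
\]
a constant independent of $n$.

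\textbf{The finitely many $n<n_0$.} For each such $n$, Proposition~\ref{PRa} (first part) shows $\int_{\mathbb R^2}(e^{t\gamma|u_n|^2}-1)\,dx<\infty$, since $u_n\in H^1(\mathbb R^2)$. Taking the maximum of these finitely many finite values together with $C_t\,\mathcal C(1,\beta_0)$ yields a single constant $C$ valid for all $n\in\mathbb N$.

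The main obstacle is exactly the uniformity of the constant in Proposition~\ref{mixedTM}(ii). The fix is to use the strict margin $\delta$ in the $\limsup$ to freeze one exponent $\beta_0<4\pi$, and then apply Proposition~\ref{PRa} once to the normalized sequence $v_n$.
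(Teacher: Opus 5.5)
Your proof is correct and follows the same route as the paper: use the strict margin in the $\limsup$ to fix $t>1$, normalize $v_n=u_n/\|u_n\|$, bound $\int_{\mathbb R^2}(e^{t\gamma|u_n|^2}-1)\,dx$ by the Trudinger--Moser estimate, pass to $(e^{\gamma|u_n|^2}-1)^t$ via $(e^{\eta}-1)^t\le C_t(e^{t\eta}-1)$, and absorb the finitely many $n<n_0$ into the constant. Your extra step of freezing a single exponent $\beta_0<4\pi$ and applying Proposition~\ref{PRa} directly makes the $n$-independence of the constant explicit; the paper's citation of Proposition~\ref{mixedTM}(ii) takes that uniformity for granted.
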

		\begin{proof}
			Since
			\[
			\limsup_{n\to +\infty}\gamma \|u_n\|^{2}<(2+\alpha)\pi,
			\]
			there exist $\varepsilon>0$ and $n_{0}\in\mathbb N$ such that
			\[
			\gamma \|u_n\|^{2}\le (2+\alpha)\pi-\varepsilon,
			\qquad \forall~~ n\ge n_{0}.
			\]
			Choose $t>1$, close enough to $1$, so that
			\[
			t\bigl((2+\alpha)\pi-\varepsilon\bigr)<(2+\alpha)\pi.
			\]
			Then, for every $n\ge n_{0}$,
			\[
			t\gamma \|u_n\|^{2}<(2+\alpha)\pi.
			\]
			Now define
			\[
			v_n:=\frac{u_n}{\|u_n\|}.
			\]
			Then $\|v_n\|=1$, and hence, by the $ii)$ of Proposition \ref{mixedTM}, there exists a constant $C_{1}>0$ such that
			\[
			\int_{\mathbb R^{2}}
			\left(
			e^{\,t\gamma \|u_n\|^{2}|v_n|^{2}}-1
			\right)\,dx
			=
			\int_{\mathbb R^{2}}
			\left(
			e^{\,t\gamma |u_n|^{2}}-1
			\right)\,dx
			\le C_{1},
			\qquad \forall~ n\ge n_{0}.
			\]
			Since for every $t>1$, such that
			\[
			\bigl(e^{\eta}-1\bigr)^{t}\le C_t\bigl(e^{t\eta}-1\bigr),
			\qquad \forall~ \eta\ge0,
			\]
			Hence, 
			\[
			\int_{\mathbb R^{2}}\bigl(e^{\gamma |u_n|^{2}}-1\bigr)^{t}\,dx
			\le
			\int_{\mathbb R^{2}}\bigl(e^{t\gamma |u_n|^{2}}-1\bigr)\,dx
			\le C_1C_{t}=C'_t,
			\qquad \forall~ n\ge n_{0}.
			\]
			Finally, enlarging the constant if necessary to cover the finitely many indices $n<n_{0}$, we obtain
			\[
			\int_{\mathbb R^{2}}\bigl(e^{\gamma |u_n|^{2}}-1\bigr)^{t}\,dx\le C,
			\qquad \forall~ n\in\mathbb N,
			\]
			where the lemma follows by fixing a constant
			\[
			C = \max \left\{
			C'_t,\;
			\int_{\mathbb{R}^2} \left(e^{\gamma |u_1|^2} - 1\right)^t \, dx,\;
			\ldots,\;
			\int_{\mathbb{R}^2} \left(e^{\gamma |u_{n_0}|^2} - 1\right)^t \, dx
			\right\}.
			\]
			This completes the proof.
		\end{proof}
		\begin{lemma}\label{imp} Assume that \ref{f1}-\ref{f2} hold. Let $\{u_{n}\}\subset \mathcal{S}_{r}(a)$ with
			\begin{equation}\label{ene}
				\begin{aligned}\displaystyle
					\limsup_{n \rightarrow +\infty} \gamma \| u_n\|^{2}  < (2+\alpha)\pi.
				\end{aligned}
			\end{equation}
			If $u_{n} \rightharpoonup u$ in $H^{1}_{rad}(\mathbb{R}^{2})$ and $u_n(x) \rightarrow u(x)$ a.e. in $\mathbb{R}^2$, then
			\begin{eqnarray*}
				\begin{aligned}\displaystyle
					|u_n|^{q}(e^{\gamma  |u_n(x)|^{2}}-1) \rightarrow |u|^{q}(e^{\gamma |u(x)|^{2}}-1) \,\, \mbox{in} \,\, L^{1}(\mathbb{R}^{2}).
				\end{aligned}
			\end{eqnarray*}
		\end{lemma}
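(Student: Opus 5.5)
The approach is a Vitali-type argument: pointwise convergence plus a uniform integrability that comes from a uniform $L^{t}$ bound on the exponential term (Lemma \ref{TMineq}) together with compactness of the radial embedding. Set $g_n:=|u_n|^{q}(e^{\gamma|u_n|^{2}}-1)$ and $g:=|u|^{q}(e^{\gamma|u|^{2}}-1)$. Since $u_n\to u$ a.e., we have $g_n\to g$ a.e. by continuity. By \eqref{ene} and Lemma \ref{TMineq}, there is $t>1$ close to $1$ with
\[
\int_{\mathbb{R}^2}\bigl(e^{\gamma|u_n|^2}-1\bigr)^{t}\,dx\le C
\]
for all $n$. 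Fatou's lemma then gives the same bound for $u$. Let $t'=t/(t-1)$ be the conjugate exponent. Since $qt'>2$, the compact embedding $H^1_{rad}(\mathbb{R}^2)\hookrightarrow L^{qt'}(\mathbb{R}^2)$ (Strauss) yields $u_n\to u$ strongly in $L^{qt'}$.

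To show $g_n\to g$ in $L^1$, I would use the splitting
\[
|g_n-g|\le \bigl||u_n|^q-|u|^q\bigr|\,\bigl(e^{\gamma|u_n|^2}-1\bigr)+|u|^q\,\bigl|e^{\gamma|u_n|^2}-e^{\gamma|u|^2}\bigr|.
\]

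\emph{First term.} By Hölder with exponents $(t',t)$ it is bounded by $\bigl||u_n|^q-|u|^q\bigr|_{t'}\,C^{1/t}$. This tends to zero because $u_n\to u$ in $L^{qt'}$, which implies $|u_n|^q\to|u|^q$ in $L^{t'}$ (for example via $||a|^q-|b|^q|\le q|a-b|(|a|^{q-1}+|b|^{q-1})$ and Hölder).

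\emph{Second term.} Here $h_n:=e^{\gamma|u_n|^2}-1$ is bounded in $L^{t}$ with $t>1$ and converges a.e. to $h:=e^{\gamma|u|^2}-1$. Hence $h_n\rightharpoonup h$ weakly in $L^{t}$, but weak convergence alone does not give convergence of $\int |u|^q|h_n-h|$. I would instead apply Vitali directly to $|u|^q|h_n-h|$.
\begin{itemize}
\item On a set $E$, Hölder gives $\int_E |u|^q|h_n-h|\le |u|^q\chi_E|_{t'}\cdot 2C^{1/t}$. The factor $\bigl||u|^q\chi_E\bigr|_{t'}$ is small uniformly in $n$ when $|E|$ is small, or when $E$ lies outside a large ball, since $|u|^q\in L^{t'}$ is fixed. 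This gives both uniform integrability and tightness.
\item Pointwise convergence holds a.e., so Vitali's theorem gives convergence to $0$ in $L^1$.
\end{itemize}

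Alternatively, the whole of $g_n$ can be treated by Vitali at once: $\int_E g_n\le \bigl||u_n|^q\chi_E\bigr|_{t'}C^{1/t}$. Because $|u_n|^{qt'}$ converges strongly in $L^1$, the family is uniformly integrable and tight, so the same conclusion follows more directly. I would present this cleaner version.

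The main obstacle is securing the uniform $L^{t}$ bound with $t>1$, rather than only an $L^1$ bound. This is exactly what Lemma \ref{TMineq} supplies under hypothesis \eqref{ene}, so the remaining steps are routine. One should also double-check that $qt'>2$, which holds since $q>2$ and $t'>1$, so that the radial compact embedding applies.
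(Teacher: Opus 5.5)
Your argument is correct. It uses the same two ingredients as the paper: the uniform bound $\int_{\mathbb{R}^2}(e^{\gamma|u_n|^2}-1)^t\,dx\le C$ from Lemma~\ref{TMineq}, and the strong convergence $u_n\to u$ in $L^{qt'}(\mathbb{R}^2)$ from the compact radial embedding. Where you differ is in how you close the argument.

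The paper invokes Kavian's lemma to get $h_n\rightharpoonup h$ weakly in $L^t$. It pairs this with $|u_n|^q\to|u|^q$ strongly in $L^{t'}$ and concludes. Strictly speaking, that weak--strong pairing only gives weak convergence of $g_n$ in $L^1$, in particular $\int g_n\to\int g$. Upgrading to convergence in the $L^1$ norm needs one more step, Scheff\'e's (or the Brezis--Lieb) lemma, using $g_n\ge 0$ and $g_n\to g$ a.e. The paper leaves this step implicit.

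Your Vitali argument, in the cleaner form $\int_E g_n\le \bigl(\int_E|u_n|^{qt'}\bigr)^{1/t'}C^{1/t}$ with $\{|u_n|^{qt'}\}$ convergent in $L^1$ (hence uniformly integrable and tight), gives norm convergence directly. It does not need the identification of the weak limit of $h_n$. You also correctly noted that weak convergence alone would not control $\int|u|^q|h_n-h|$.

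The paper's route is shorter once the cited lemma is granted; yours is more self-contained and makes the passage to $L^1$-norm convergence explicit.
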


		\begin{proof}
			Setting
			\begin{eqnarray*}
				\begin{aligned}\displaystyle
					h_n(x)=e^{\gamma |u_n|^{2}}-1.
				\end{aligned}
			\end{eqnarray*}
			By \eqref{ene}, there exists $t>1$ close to $1$ such that
			\begin{eqnarray*}
				\begin{aligned}\displaystyle
					t\gamma \|u_{n}\|^{2}\leq(2+\alpha)\pi.
				\end{aligned}
			\end{eqnarray*}
			Thus, by Lemma \ref{TMineq}, we know that
			\begin{eqnarray*}
				\begin{aligned}\displaystyle
					\int_{\mathbb{R}^{2}}\Big(e^{\gamma |u_{n}|^{2}}-1\Big)^{t}dx&
					\leq \int_{\mathbb{R}^{2}}\Big(e^{t\gamma |u_{n}|^{2}}-1\Big)dx<C,
				\end{aligned}
			\end{eqnarray*}
			where $C=C(t,a,\gamma)>0$. Then,
			\begin{eqnarray*}
				\begin{aligned}\displaystyle
					\int_{\mathbb{R}^{2}} \Big(h_n(x)\Big)^{t}dx=\int_{\mathbb{R}^{2}}\Big(e^{\gamma |u_{n}|^{2}}-1\Big)^{t}dx<C,
				\end{aligned}
			\end{eqnarray*}
			which implies that
			\begin{eqnarray*}
				\begin{aligned}\displaystyle
					h_{n} \in L^{t}(\mathbb{R}^{2}) \quad \mbox{and} \quad \sup_{n \in \mathbb{N}}|h_n|_{t}<+\infty.
				\end{aligned}
			\end{eqnarray*}
			Therefore, $\{h_{n}\}$ is a bounded sequence in $L^{t}(\mathbb{R}^{2})$. By $u_n \rightharpoonup u$ in $H^{1}_{rad}(\mathbb{R}^{2})$, we know that $u_n \rightarrow u$ a.e. in $\mathbb{R}^2.$ Thus, using \cite[Lemma 4.8]{Kavian1993}, we obtain that
			\begin{equation} \label{sNewlimit}
				h_n \rightharpoonup h=e^{\gamma |u|^{2}}-1, \,\, \mbox{in} \,\, L^{t}(\mathbb{R}^{2}).
			\end{equation}
			Now, we show that
			\begin{equation}\label{ssubimp}
				\begin{aligned}\displaystyle
					|u_{n}|^{q}\rightarrow |u|^{q} \ \ \mbox{in} \ \ L^{t^{\prime}}(\mathbb{R}^{2}),
				\end{aligned}
			\end{equation}
			where $t^{\prime}=\frac{t}{t-1}$. Then, by the compact embedding $H^{1}_{rad}(\mathbb{R}^{2}) \hookrightarrow L^{qt'}(\mathbb{R}^{2})$, we have
			\begin{eqnarray*}
				\begin{aligned}\displaystyle
					u_{n} \rightarrow u \quad  L^{qt^{\prime}}(\mathbb{R}^{2}).
				\end{aligned}
			\end{eqnarray*}
			Hence, we get \eqref{ssubimp}. Together \eqref{sNewlimit} with \eqref{ssubimp}, we know
			\begin{eqnarray*}
				\begin{aligned}\displaystyle
					|u_n|^{q}(e^{\gamma  |u_n(x)|^{2}}-1) \rightarrow |u|^{q}(e^{\gamma |u(x)|^{2}}-1) \,\, \mbox{in} \,\, L^{1}(\mathbb{R}^{2}).
				\end{aligned}
			\end{eqnarray*}
			This completes the proof.
		\end{proof}
		\noindent	By Lemma \ref{imp}, we have the following two important corollaries.
		\begin{corollary}\label{weaklim}
			Assume that \ref{f1}-\ref{f2} hold. Let $\{u_{n}\}\subset \mathcal{S}_{r}(a)$ with
			\begin{equation}
				\begin{aligned}\displaystyle
					\limsup_{n \rightarrow +\infty} \gamma \| u_n\|^{2}  < (2+\alpha)\pi.
				\end{aligned}
			\end{equation}
			If $u_{n} \rightharpoonup u$ in $H^{1}_{rad}(\mathbb{R}^{2})$ and $u_{n}(x) \rightarrow u(x)$ a.e in $\mathbb{R}^2$, then
			\begin{eqnarray*}
				\begin{aligned}\displaystyle
					\int_{\mathbb{R}^{2}}(I_{\alpha}\ast F(u_{n}))f(u_{n})\psi dx \rightarrow \int_{\mathbb{R}^{2}}(I_{\alpha}\ast F(u))f(u)\psi dx, \ \mbox{as} \ n\rightarrow\infty,
				\end{aligned}
			\end{eqnarray*}
			for any $\psi\in C_{0}^{\infty}(\mathbb{R}^{2})$.
		\end{corollary}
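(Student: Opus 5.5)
The approach is to split the difference into two pieces,
\[
\int_{\mathbb{R}^{2}}(I_{\alpha}\ast F(u_{n}))f(u_{n})\psi\,dx-\int_{\mathbb{R}^{2}}(I_{\alpha}\ast F(u))f(u)\psi\,dx
=\int_{\mathbb{R}^{2}}(I_{\alpha}\ast F(u_{n}))\bigl(f(u_{n})-f(u)\bigr)\psi\,dx+\int_{\mathbb{R}^{2}}\bigl(I_{\alpha}\ast (F(u_{n})-F(u))\bigr)f(u)\psi\,dx,
\]
and to treat each piece with Proposition~\ref{PRc} in the exponent pair $t_1=t_2=\frac{4}{2+\alpha}$. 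For the first piece it suffices to know that $F(u_n)$ is bounded in $L^{\frac{4}{2+\alpha}}(\mathbb{R}^2)$ and that $f(u_n)\psi\to f(u)\psi$ in $L^{\frac{4}{2+\alpha}}(\mathbb{R}^2)$. For the second piece it suffices that $F(u_n)\to F(u)$ in $L^{\frac{4}{2+\alpha}}(\mathbb{R}^2)$ and that $f(u)\psi\in L^{\frac{4}{2+\alpha}}$; the latter holds because $\psi$ has compact support and $f(u)\in L^r_{loc}$ for all $r$ by Proposition~\ref{PRa}.

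\textbf{Boundedness of $F(u_n)$.} This follows exactly as in Lemma~\ref{P1_mixed}. By \eqref{snona} (or \eqref{nona}) together with H\"older's inequality, we have
\[
|F(u_n)|_{\frac{4}{2+\alpha}}^{2}\le C\varepsilon^2|u_n|^{2(\tau+1)}_{\frac{4(\tau+1)}{2+\alpha}}+C\kappa_\varepsilon^2|u_n|^{2q}_{\frac{4qt'}{2+\alpha}}\left(\int_{\mathbb{R}^2}(e^{\gamma|u_n|^2}-1)^{\frac{4t}{2+\alpha}}\right)^{\frac{2+\alpha}{2t}}.
\]
The last factor is bounded uniformly in $n$ by Lemma~\ref{TMineq}, applied with $\gamma$ replaced by a slightly larger exponent. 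This is admissible because the hypothesis $\limsup\gamma\|u_n\|^2<(2+\alpha)\pi$ leaves room, and the factor $\frac{4}{2+\alpha}$ is absorbed by the Trudinger--Moser threshold $(2+\alpha)\pi$ versus $4\pi$.

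\textbf{Strong convergence of $F(u_n)$ and $f(u_n)\psi$.} I would use the generalized dominated convergence (Vitali) theorem, relying on pointwise a.e.\ convergence and the continuity of $F$ and $f$. For $F(u_n)$, the polynomial part $\varepsilon|u_n|^{\tau+1}$ converges in $L^{\frac{4}{2+\alpha}}$ by the compact radial embedding $H^1_{rad}(\mathbb{R}^2)\hookrightarrow L^p(\mathbb{R}^2)$, $p>2$. The exponential part $|u_n|^q(e^{\gamma|u_n|^2}-1)$, raised to the power $\frac{4}{2+\alpha}$, is handled by the same argument as in Lemma~\ref{imp}: a uniform $L^{t}$ bound on the exponential factor (equi-integrability), combined with strong convergence of $|u_n|^{q}$ in $L^{qt'}$ (tightness at infinity from radial compactness). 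Together these give uniform integrability and hence convergence in $L^1$ of the $\frac{4}{2+\alpha}$-th powers, which yields $F(u_n)\to F(u)$ in $L^{\frac{4}{2+\alpha}}$. For $f(u_n)\psi$ the argument is easier, since everything lives on $\operatorname{supp}\psi$, a bounded set. There $|f(u_n)|^{\frac{4}{2+\alpha}}$ is dominated via \eqref{snonb} by terms that are uniformly bounded in $L^{r}(\operatorname{supp}\psi)$ for some $r>1$, again by Lemma~\ref{TMineq}, so Vitali applies directly.

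\textbf{Main obstacle.} The delicate point is the exponent bookkeeping in the Trudinger--Moser step. Raising $e^{\gamma|u_n|^2}-1$ to the power $\frac{4t}{2+\alpha}$ multiplies the exponent by $\frac{4t}{2+\alpha}$, so one must check that the uniform bound from Lemma~\ref{TMineq} still applies. The check is that $\frac{4t}{2+\alpha}\gamma\|u_n\|^2<4\pi$, which holds for $t$ close to $1$ precisely because $\limsup\gamma\|u_n\|^2<(2+\alpha)\pi$. I would write this out carefully, since Proposition~\ref{mixedTM}(ii) requires the strict inequality $\beta\|u\|^2<4\pi$ for the rescaled exponent $\beta=\frac{4t}{2+\alpha}\gamma$. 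Once this is verified, both pieces of the splitting tend to zero and the corollary follows.
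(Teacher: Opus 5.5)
Your proof is correct, but it follows a different route from the paper's.

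\textbf{The paper's route.} The paper takes an external bound $|I_\alpha\ast F(u_n)|_\infty\le C$. It then dominates the integrand pointwise by
\[
C|f(u_n)||\psi|\le \varepsilon|u_n|^{\tau}|\psi|+C|u_n|^{q-1}|\psi|\bigl(e^{\gamma|u_n|^2}-1\bigr),
\]
and concludes with a generalized dominated convergence theorem. Convergence of the dominating terms on $\mathrm{supp}\,\psi$ is obtained as in Lemma~\ref{imp}.

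\textbf{Your route.} You split the difference and apply Proposition~\ref{PRc} with $t_1=t_2=\frac{4}{2+\alpha}$. You therefore need two facts. The first is $F(u_n)\to F(u)$ in $L^{\frac{4}{2+\alpha}}(\mathbb{R}^2)$, by Vitali with tightness from radial compactness. The second is $f(u_n)\psi\to f(u)\psi$ in $L^{\frac{4}{2+\alpha}}$.

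\textbf{What each buys.} The paper's argument is shorter, but it leaves two points implicit.
\begin{enumerate}
\item The generalized dominated convergence theorem needs the a.e. convergence $I_\alpha\ast F(u_n)\to I_\alpha\ast F(u)$, which the paper does not address.
\item The $L^\infty$ bound itself is delicate under this hypothesis. Controlling $\int_{|x-y|<1}F(u_n(y))|x-y|^{\alpha-2}\,dy$ uniformly in $x$ via H\"older requires $F(u_n)$ bounded in some $L^p$ with $p>2/\alpha$. That means roughly $\gamma\|u_n\|^2<2\alpha\pi$, which is strictly stronger than $\gamma\|u_n\|^2<(2+\alpha)\pi$.
\end{enumerate}
Your splitting uses exactly the exponent $\frac{4}{2+\alpha}$, and $\frac{4}{2+\alpha}\gamma\|u_n\|^2<4\pi$ is precisely the assumed threshold. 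So the bookkeeping you flag closes for $t$ close to $1$, and your argument avoids the $L^\infty$ bound altogether. As a by-product you get strong convergence of $F(u_n)$. Via the Riesz potential mapping $L^{\frac{4}{2+\alpha}}\to L^{\frac{4}{2-\alpha}}$, this gives the a.e. convergence the paper tacitly uses, along a subsequence. It also gives the first statement of Corollary~\ref{Conver} by the same splitting.

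\textbf{Small corrections.}
\begin{itemize}
\item The uniform bound on $\int(e^{\gamma|u_n|^2}-1)^{\frac{4t}{2+\alpha}}$ comes from Proposition~\ref{mixedTM}(ii), i.e. from the proof of Lemma~\ref{TMineq}, not from its statement. With $\gamma$ replaced by $\frac{4}{2+\alpha}\gamma$, that statement's hypothesis is stronger than what you have, as your last paragraph already recognizes.
\item The strong convergence you need is $u_n\to u$ in $L^{\frac{4qt'}{2+\alpha}}$, not $|u_n|^q$ in $L^{qt'}$.
\end{itemize}
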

		
		\begin{proof}
			As we know from, \cite[Lemma 4.1]{ACTT},
			\begin{equation}\label{choq}
				\begin{aligned}\displaystyle
					|I_{\alpha}\ast F(u_{n})|_{\infty}\leq C.
				\end{aligned}
			\end{equation}
			Hence, for any $\psi\in C_{0}^{\infty}(\mathbb{R}^{2})$, we have
			\begin{eqnarray*}
				\begin{aligned}\displaystyle
					|(I_{\alpha}\ast F(u_{n}))f(u_{n})\psi|\leq C|f(u_{n})||\psi|\leq \varepsilon|u_{n}|^{\tau}|\psi| +C|u_{n}|^{q-1}|\psi|(e^{\gamma |u_{n}|^{2}}-1).
				\end{aligned}
			\end{eqnarray*}
			Let $U=\text{supp }(\psi)$. Then, we obtain
			\begin{eqnarray*}
				\begin{aligned}\displaystyle
					\int_{U}|u_{n}|^{\tau}|\psi|dx\rightarrow\int_{U}|u|^{\tau}|\psi|dx, \ \mbox{as} \ n\rightarrow\infty,
				\end{aligned}
			\end{eqnarray*}
			and
			\begin{eqnarray*}
				\begin{aligned}\displaystyle
					\int_{U}|u_{n}|^{q-1}|\psi|(e^{\gamma |u_{n}|^{2}}-1)dx\rightarrow\int_{U}|u|^{q-1}|\psi|(e^{\gamma |u|^{2}}-1)dx, \ \mbox{as} \ n\rightarrow\infty.
				\end{aligned}
			\end{eqnarray*}
			Now, applying a variant of the Lebesgue Dominated Convergence Theorem, we can deduce that
			\begin{eqnarray*}
				\begin{aligned}\displaystyle
					\int_{\mathbb{R}^{2}}(I_{\alpha}\ast F(u_{n}))f(u_{n})\psi dx \rightarrow \int_{\mathbb{R}^{2}}(I_{\alpha}\ast F(u))f(u)\psi dx, \ \mbox{as} \ n\rightarrow\infty,
				\end{aligned}
			\end{eqnarray*}
			which completes the proof.
		\end{proof}

		\begin{corollary} \label{Conver} Assume that \ref{f1}-\ref{f2} hold. Let $\{u_{n}\}\subset \mathcal{S}_{r}(a)$ with
			\begin{equation}\label{eneb}
				\begin{aligned}\displaystyle
					\limsup_{n \rightarrow +\infty} \gamma \| u_n\|^{2}  < (2+\alpha)\pi.
				\end{aligned}
			\end{equation}
			If $u_{n} \rightharpoonup u$ in $H^{1}_{rad}(\mathbb{R}^{2})$ and $u_{n}(x) \rightarrow u(x)$ a.e in $\mathbb{R}^2$, then
			\begin{eqnarray*}
				\begin{aligned}\displaystyle
					\lim_{n\to+\infty}\int_{\mathbb{R}^{2}}(I_{\alpha}\ast F(u_{n}))F(u_{n})dx \rightarrow \int_{\mathbb{R}^{2}}(I_{\alpha}\ast F(u))F(u)dx,
				\end{aligned}
			\end{eqnarray*}
			and
			\begin{eqnarray*}
				\begin{aligned}\displaystyle
					\lim_{n\to+\infty}\int_{\mathbb{R}^{2}}(I_{\alpha}\ast F(u_{n}))f(u_{n})u_{n}\,dx \rightarrow \int_{\mathbb{R}^{2}}(I_{\alpha}\ast F(u))f(u)u\,dx.
				\end{aligned}
			\end{eqnarray*}	
		\end{corollary}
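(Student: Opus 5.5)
The plan is to reduce both limits to the convergence of $F(u_n)$ in $L^{\frac{4}{2+\alpha}}(\mathbb{R}^2)$ (and of $f(u_n)u_n$ in the same space), and then pass to the limit in the Hardy--Littlewood--Sobolev bilinear form of Proposition \ref{PRc}.

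\textbf{Step 1: strong convergence in $L^{p_\alpha}$, with $p_\alpha:=\frac{4}{2+\alpha}\in(1,2)$.} By \eqref{snona} or \eqref{nona},
\[
|F(u_n)|^{p_\alpha}\le C\Big(\varepsilon^{p_\alpha}|u_n|^{(\tau+1)p_\alpha}+\kappa_\varepsilon^{p_\alpha}|u_n|^{qp_\alpha}\big(e^{\gamma|u_n|^2}-1\big)^{p_\alpha}\Big).
\]
Using $(e^{s}-1)^{p_\alpha}\le C(e^{p_\alpha s}-1)$, the second term is controlled by $|u_n|^{qp_\alpha}(e^{p_\alpha\gamma|u_n|^2}-1)$.

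Because \eqref{eneb} is a strict inequality, we can replace $\gamma$ by a slightly larger $\gamma'$ and still keep $\limsup\gamma'\|u_n\|^2<(2+\alpha)\pi$ after adjusting. Here I would invoke Lemma \ref{imp} with exponent $qp_\alpha$ and $\gamma p_\alpha$ in place of $q$ and $\gamma$. One must check that its hypothesis still holds; if $p_\alpha\gamma$ is too large for this, I would instead use Lemma \ref{TMineq} plus H\"older directly, exactly as in its proof.

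Lemma \ref{imp} gives that the dominating functions $g_n:=C\big(|u_n|^{(\tau+1)p_\alpha}+|u_n|^{qp_\alpha}(e^{p_\alpha\gamma|u_n|^2}-1)\big)$ converge in $L^1$. The polynomial part converges by the compact embedding $H^1_{rad}\hookrightarrow L^r$ for $r>2$. Since $F(u_n)\to F(u)$ a.e. by continuity, the generalized dominated convergence theorem (Vitali/Br\'ezis--Lieb type) yields $F(u_n)\to F(u)$ in $L^{p_\alpha}$.

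The same argument, using \eqref{snonb} or \eqref{nonb} multiplied by $|u_n|$, gives $f(u_n)u_n\to f(u)u$ in $L^{p_\alpha}$.

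\textbf{Step 2: passing to the limit in the bilinear form.} By Proposition \ref{PRc} with $t_1=t_2=p_\alpha$, the form $B(\phi,\psi)=\int(I_\alpha*\phi)\psi$ is bounded on $L^{p_\alpha}\times L^{p_\alpha}$. Write
\[
B(F(u_n),F(u_n))-B(F(u),F(u))=B(F(u_n)-F(u),F(u_n))+B(F(u),F(u_n)-F(u)).
\]
Each term is at most $C|F(u_n)-F(u)|_{p_\alpha}\sup_n|F(u_n)|_{p_\alpha}\to0$, which gives the first limit. The second limit follows identically, using $B(F(u_n),f(u_n)u_n)$ and the two strong convergences from Step 1.

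\textbf{Main obstacle.} The delicate point is the exponent bookkeeping in Step 1. Raising to the power $p_\alpha$ turns $\gamma$ into $p_\alpha\gamma$, and Lemma \ref{imp} is stated under $\gamma\|u_n\|^2<(2+\alpha)\pi$, so I must make sure that the threshold actually used covers $p_\alpha t\gamma$. The first thing to try is to note that $p_\alpha(2+\alpha)\pi=4\pi$, which is precisely the Trudinger--Moser threshold of Proposition \ref{mixedTM}. So the condition $\gamma\|u_n\|^2<(2+\alpha)\pi$ is designed to give $p_\alpha t\gamma\|u_n\|^2<4\pi$ for $t$ close to $1$, and Proposition \ref{mixedTM}(ii) then provides the uniform bound needed for the Vitali argument.
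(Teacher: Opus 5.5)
Your proof is correct, but it takes a different route from the paper.

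The paper keeps the convolution factor in $L^\infty$. It uses the uniform bound \eqref{choq} on $|I_\alpha*F(u_n)|_\infty$, imported from the literature, dominates the integrand by $C|F(u_n)|$, and concludes with Lemma~\ref{imp}, the compact radial embedding and a generalized dominated convergence theorem. You instead prove that $F(u_n)\to F(u)$ and $f(u_n)u_n\to f(u)u$ strongly in $L^{4/(2+\alpha)}(\mathbb{R}^2)$. You then pass to the limit using the continuity of the Hardy--Littlewood--Sobolev bilinear form, i.e. Proposition~\ref{PRc} with $t_1=t_2=\frac{4}{2+\alpha}$.

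Your route has two advantages. It is more self-contained: you need neither the external $L^\infty$ estimate nor the a.e.\ convergence of $I_\alpha*F(u_n)$, which the paper's dominated-convergence step tacitly requires but does not justify. It also explains the hypothesis \eqref{eneb}, since $\frac{4}{2+\alpha}(2+\alpha)\pi=4\pi$ is exactly the Trudinger--Moser threshold.

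The cost is that you need exponential integrability at the larger exponent $\frac{4}{2+\alpha}t\gamma$ rather than $t\gamma$. So Lemma~\ref{imp} cannot simply be cited with $\gamma$ replaced by $\frac{4}{2+\alpha}\gamma$: its hypothesis would become $\limsup_n\frac{4}{2+\alpha}\gamma\|u_n\|^2<(2+\alpha)\pi$, which does not follow from \eqref{eneb}. Your fallback is the version that works. Rerun the proof of Lemma~\ref{imp}, bounding $\int_{\mathbb{R}^2}(e^{\frac{4}{2+\alpha}t\gamma|u_n|^2}-1)\,dx$ uniformly via Proposition~\ref{mixedTM}(ii). This applies because $\frac{4}{2+\alpha}t\gamma\|u_n\|^2<4\pi$ for $t>1$ close to $1$ and $n$ large, and the constant depends only on how far this quantity stays below $4\pi$. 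The enlargement of $\gamma$ to $\gamma'$ is unnecessary.

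Finally, the weak--strong pairing in that proof gives only convergence of the integrals of the nonnegative dominating functions, not $L^1$ convergence. Together with a.e.\ convergence, however, this is exactly what the generalized dominated convergence theorem needs to give $F(u_n)\to F(u)$ in $L^{4/(2+\alpha)}$.
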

		
		\begin{proof} From \eqref{choq}, we have
			\begin{eqnarray*}
				\begin{aligned}\displaystyle
					|I_{\alpha}\ast F(u_{n})|_{\infty}\leq C.
				\end{aligned}
			\end{eqnarray*}
			By \eqref{snona} and \eqref{nona}, we have
			\begin{eqnarray*}
				\begin{aligned}\displaystyle
					|F(u_{n})|\leq\varepsilon|u_{n}|^{\tau+1}+C|u_{n}|^{q}(e^{\gamma |u_{n}|^{2}}-1)\,\, \text{ for all }\, u_{n} \in H^{1}(\mathbb{R}^{2}),
				\end{aligned}
			\end{eqnarray*}
			where $\gamma>\gamma_{0}$, $\tau>3$ and $q>2$. Hence, we have
			\begin{eqnarray*}
				\begin{aligned}\displaystyle
					|(I_{\alpha}\ast F(u_{n}))F(u_{n})|\leq C|F(u_{n})|\leq\varepsilon C|u_{n}|^{\tau+1}+C|u_{n}|^{q}(e^{\gamma |u_{n}|^{2}}-1).
				\end{aligned}
			\end{eqnarray*}
			By 	Lemma \ref{imp}, we know
			\begin{eqnarray*}
				\begin{aligned}\displaystyle
					\int_{\mathbb{R}^{2}}|u_n|^{q}(e^{\gamma  |u_n(x)|^{2}}-1)dx \rightarrow \int_{\mathbb{R}^{2}} |u|^{q}(e^{\gamma |u(x)|^{2}}-1)dx \,\, \mbox{as} \,\, n\rightarrow\infty.
				\end{aligned}
			\end{eqnarray*}
			By the compact embedding $H^{1}_{rad}(\mathbb{R}^{2}) \hookrightarrow L^{p}(\mathbb{R}^{2})$, for $p>2$, we have
			\begin{eqnarray*}
				\begin{aligned}\displaystyle
					u_{n}\rightarrow u \quad \mbox{in} \ \ L^{p}(\mathbb{R}^2).
				\end{aligned}
			\end{eqnarray*}
			Now, applying a variant of the Lebesgue Dominated Convergence Theorem, we can deduce that
			\begin{eqnarray*}
				\begin{aligned}\displaystyle
					\int_{\mathbb{R}^{2}}(I_{\alpha}\ast F(u_{n}))F(u_{n})dx \rightarrow \int_{\mathbb{R}^{2}}(I_{\alpha}\ast F(u))F(u)dx  \,\, \mbox{as} \,\, n\rightarrow\infty.
				\end{aligned}
			\end{eqnarray*}
			A similar argument works to show that
			\begin{eqnarray*}
				\begin{aligned}\displaystyle
					\int_{\mathbb{R}^{2}}(I_{\alpha}\ast F(u_{n}))f(u_{n})u_{n}\,dx \rightarrow \int_{\mathbb{R}^{2}}(I_{\alpha}\ast F(u))f(u)u\,dx \,\, \mbox{as} \,\, n\rightarrow\infty,
				\end{aligned}
			\end{eqnarray*}	
			which completes the proof.
		\end{proof}
		\begin{lemma}\label{lamd}
			Assume that \ref{f1}-\ref{f2} hold. Let $\{u_n\}\subset \mathcal{S}_r(a)$ and $\{\lambda_n\}\subset\mathbb{R}$ be the sequences satisfying \eqref{eq_PSS}. Suppose that $\{u_n\}$ is bounded in $H^1(\mathbb{R})$, then $\{\lambda_n\}$ is bounded with
			\begin{equation}\label{lambda-mixed-formula}\liminf_{n\to\infty}\lambda_n
				=
				\liminf_{n\to\infty}
				\left[
				\left(\frac{s-1}{2}\right)\frac{1}{a^2}[u_n]_s^2
				+ \Lambda \left(\frac{2+\alpha}{2a^2}\right)
				\int_{\mathbb{R}^2}(I_\alpha *F(u_n))F(u_n)\,dx
				\right].
			\end{equation}
		\end{lemma}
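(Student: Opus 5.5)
Since $\{u_n\}$ is bounded in $H^1(\mathbb{R}^2)$ and $J'(u_n)+\lambda_n u_n\to 0$ in $H^{-1}(\mathbb{R}^2)$, the plan is to test this relation against $u_n$ itself. Because $\langle J'(u_n)+\lambda_n u_n,u_n\rangle \le \|J'(u_n)+\lambda_n u_n\|_{H^{-1}}\|u_n\| = o_n(1)$ and $|u_n|_2^2=a^2$, we obtain
\[
\lambda_n a^2 = -|\nabla u_n|_2^2-\frac12[u_n]_s^2+\Lambda\int_{\mathbb{R}^2}(I_\alpha*F(u_n))f(u_n)u_n\,dx+o_n(1).
\]
Next we eliminate $D_n=\int(I_\alpha*F(u_n))f(u_n)u_n$ using $P(u_n)=o_n(1)$. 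By \eqref{Pohozaev}, this gives $\Lambda D_n = |\nabla u_n|_2^2+\frac{s}{2}[u_n]_s^2+\Lambda\frac{2+\alpha}{2}C_n+o_n(1)$, with $C_n=\int(I_\alpha*F(u_n))F(u_n)$. Substituting, the gradient terms cancel, and we get
\[
\lambda_n a^2=\frac{s-1}{2}[u_n]_s^2+\Lambda\frac{2+\alpha}{2}C_n+o_n(1).
\]
Dividing by $a^2$ and taking $\liminf$ yields \eqref{lambda-mixed-formula}, since an $o_n(1)$ term does not affect the $\liminf$.

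For boundedness, we bound the right-hand side. The term $[u_n]_s^2$ is bounded because $[u_n]_s^2\le 2\|u_n\|^2$. The term $C_n$ is bounded by the argument of Lemma \ref{bounded}: from $J(u_n)\to\gamma(a)$ we have $\Lambda C_n = |\nabla u_n|_2^2+\frac12[u_n]_s^2-2\gamma(a)+o_n(1)$, which is bounded. Alternatively, the Hardy--Littlewood--Sobolev and Trudinger--Moser estimates used for well-definedness of $J$ give the same bound. Hence $\{\lambda_n\}$ is bounded.

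The only delicate point is justifying $\langle J'(u_n),u_n\rangle$ and the duality estimate. This requires $J\in C^1$ (established earlier) and the fact that $D_n$ is finite. Both hold under the unified Trudinger--Moser condition $\gamma\|u_n\|^2<(2+\alpha)\pi$ (Lemma \ref{TMineq}), or simply for each fixed $n$ via Proposition \ref{PRa}. Beyond that, the argument is a two-line algebraic elimination, so I expect no real obstacle.
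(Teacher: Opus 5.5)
Your proposal is correct and follows the paper's argument: you test the constrained Palais--Smale relation against $u_n$, eliminate the $\Lambda\int(I_\alpha*F(u_n))f(u_n)u_n$ term using $P(u_n)=o_n(1)$, and divide by $a^2$. You also derive the boundedness of $\{\lambda_n\}$, which the paper only asserts: you bound $C_n$ via $J(u_n)\to\gamma(a)$ and read boundedness off the resulting identity, and this route needs no uniform Trudinger--Moser control on the $f(u_n)u_n$ term.
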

		
		\begin{proof}
			Since $\{u_n\}$ is bounded in $H^{1}(\R^2)$ and \eqref{eq_PSS} holds, the sequence $\{\lambda_n\}$ is bounded. Also, since $|u_n|_2^2=a^2$, we get
			\begin{equation}\label{test-un}
				\int_{\mathbb{R}^2} |\nabla u_n|^2\,dx
				+\frac12[u_n]_s^2
				+\lambda_n a^2
				=
				\Lambda\int_{\mathbb{R}^2}(I_\alpha *F(u_n))f(u_n)u_n\,dx
				+o_n(1).
			\end{equation}
			Hence,
			\begin{equation}\label{lambda-step1}
				\lambda_n a^2
				=
				-\int_{\mathbb{R}^2} |\nabla u_n|^2\,dx
				-\frac12[u_n]_s^2
				+ \Lambda \int_{\mathbb{R}^2}(I_\alpha *F(u_n))f(u_n)u_n\,dx
				+o_n(1).
			\end{equation}
			On the other hand, from \eqref{eq_PSS} we have
			\begin{equation}\label{pohozaev-rewrite}
				\int_{\mathbb{R}^2}(I_\alpha *F(u_n))f(u_n)u_n\,dx
				=
				\int_{\mathbb{R}^2} |\nabla u_n|^2\,dx
				+\frac{s}{2} [u_n]_s^2
				+\Lambda \left(\frac{2+\alpha}{2}\right)\int_{\mathbb{R}^2}(I_\alpha *F(u_n))F(u_n)\,dx
				+o_n(1).
			\end{equation}
			Substituting \eqref{pohozaev-rewrite} into \eqref{lambda-step1}, we obtain
			\[
			\lambda_n a^2
			=
			\left(\frac{s-1}{2}\right)[u_n]_s^2
			+\Lambda \left(\frac{2+\alpha}{2}\right)\int_{\mathbb{R}^2}(I_\alpha *F(u_n))F(u_n)\,dx
			+o_n(1).
			\]
			Dividing by $a^2$ yields \eqref{lambda-mixed-formula}. The $\liminf$ identity follows immediately.
		\end{proof}
		
		\section{Existence Results}\label{proofs}

		\subsection{Existence result for the subcritical case}\label{PFTa}
		In this section, we assume that \( f \) has subcritical growth and restrict our analysis to \( H^{1}_{rad}(\mathbb{R}^{2}) \).
		
		\begin{proof}[Proof of Theorem \ref{T1}]
			\noindent Let $\{u_n\}$ be the Palais Smale sequence constructed in section \ref{PSa} that satisfies \eqref{eq_PSS}. First, we show that $u_n \rightharpoonup u$ in $H^{1}_{rad}(\mathbb{R}^{2})$, with $u \neq 0$.
			
			\noindent By Lemma \ref{bounded}, the sequence $\{u_n\}$ is bounded in $H^{1}_{rad}(\mathbb{R}^{2})$, hence up to a subsequence,
			\[
			u_n \rightharpoonup u \quad \text{in } H^{1}_{rad}(\mathbb{R}^{2}).
			\]
			For $\gamma > 0$ sufficiently small and since $\{u_n\}$ is bounded, we have
			\[
			\limsup_{n \to +\infty} \gamma \|u_n\|^{2} < (2+\alpha)\pi.
			\]
			Then, by Corollary \ref{Conver}, it follows that
			\begin{equation}\label{limitaa-mixed}
				\lim_{n \to +\infty}
				\int_{\mathbb{R}^2}(I_{\alpha}\ast F(u_{n}))f(u_{n})u_{n}\,dx
				=
				\int_{\mathbb{R}^2}(I_{\alpha}\ast F(u))f(u)u\,dx,
			\end{equation}
			and
			\begin{equation}\label{limitab-mixed}
				\lim_{n \to +\infty}
				\int_{\mathbb{R}^2}(I_{\alpha}\ast F(u_{n}))F(u_{n})\,dx
				=
				\int_{\mathbb{R}^2}(I_{\alpha}\ast F(u))F(u)\,dx.
			\end{equation}
			We claim that $u \neq 0$. Otherwise, by Corollary \ref{Conver}, we  have
			\begin{eqnarray*}
				\begin{aligned}\displaystyle
					\int_{\mathbb{R}^{2}}(I_{\alpha}\ast F(u_{n}))F(u_{n})dx = \int_{\mathbb{R}^{2}}(I_{\alpha}\ast F(u_{n}))f(u_{n})u_{n}\,dx=0,
				\end{aligned}
			\end{eqnarray*}
			Then, by Lemma \ref{lamd} and \ref{f2}, we obtain, $\lambda_n\geq 0$, since
			\begin{align*}
				\liminf_{n\to\infty}\lambda_n
				&=
				\frac{1}{a^{2}}\left(\frac{s-1}{2}\right)[u_n]_s^2
				+\Lambda\left(\frac{2+\alpha}{2a^{2}}\right)
				\int_{\mathbb{R}^{2}}(I_{\alpha}\ast F(u_{n}))F(u_{n})\,dx \\
				&=
				\frac{1}{a^{2}}\left(\frac{s-1}{2}\right)[u]_s^2
				+\Lambda\left(\frac{2+\alpha}{2a^{2}}\right)
				\int_{\mathbb{R}^{2}}(I_{\alpha}\ast F(u))F(u)\,dx \\
				& \geq  0 \text{ for sufficiently large }\Lambda>0.
			\end{align*}
			On the other hand, testing the following equation
			\[
			\mathcal{L} u_n + \lambda_n u_n
			=
			\Lambda(I_{\alpha}\ast F(u_n))f(u_n)+o_n(1)
			\]
			by $u_n$, then we obtain
			\[
			|\nabla u_n|_2^2 + \frac12[u_n]_s^2 + \lambda_n a^2
			=
			\Lambda\int_{\mathbb{R}^2}(I_{\alpha}\ast F(u_n))f(u_n)u_n\,dx + o_n(1),
			\]
			leads to
			\begin{equation}\label{m2-mixed}
				-\lambda_n a^2
				=
				|\nabla u_n|_2^2 +\frac12 [u_n]_s^2 + o_n(1).
			\end{equation}
			From this,
			\[
			0 \geq -\liminf_{n\to\infty}\lambda_n a^2= \limsup_{n\to\infty} (-\lambda_n) a^2
			= \limsup_{n\to\infty}
			\big(|\nabla u_n|_2^2 + \frac12[u_n]_s^2\big)
			\geq \liminf_{n\to\infty}
			\big(|\nabla u_n|_2^2 + \frac12[u_n]_s^2\big)\geq 0,
			\]
			which implies
			\[
			|\nabla u_n|_2^2 + \frac12[u_n]_s^2 \to 0,
			\]
			a contradiction with $\gamma(a)>0$. Therefore, $u \neq 0$.
			
			\medskip
			
			\noindent Next, we show that $\lambda>0$. By  Lemma \ref{lamd}, \ref{f2}, and $u\neq 0$ 
			there exists a bounded sequence $\{\lambda_n\}$.
			Thus, up to a subsequence,
			\(
			\lambda_n \to \lambda>0, ~~\text{as}~~n \to \infty.
			\)
			Then, by Corollary \ref{weaklim}, we have 
			\begin{equation}\label{5eq1}
				\mathcal{L} u + \lambda u
				=
				\Lambda(I_{\alpha}\ast F(u))f(u)
				\quad \text{in } \mathbb{R}^{2}.
			\end{equation}
			Moreover, we deduce that $P(u)=0$. Now, we obtain that $u_n\rightharpoonup u \neq 0.$ Then, we now prove the strong convergence $u_n \to u$ in $H^{1}_{rad}(\mathbb{R}^{2})$. The proof is divided into the two steps.
			
			\medskip
			
			\noindent
			{\bf Step 1.}
			We show that
			\[
			\lim_{n\to\infty}
			\big(|\nabla u_n|_2^2 + [u_n]_s^2\big)
			=
			|\nabla u|_2^2 + [u]_s^2.
			\]
			By ${P}(u)=0$ and \eqref{eq_PSS} with \eqref{limitaa-mixed} and \eqref{limitab-mixed}, we obtain
			\[
			P(u_n)-P(u)=o_n(1),
			\]
			which implies
			\[
			|\nabla u_n|_2^2 + [u_n]_s^2
			\to
			|\nabla u|_2^2 + [u]_s^2.
			\]
			
			\medskip
			
			\noindent
			{\bf Step 2.}
			We show that $|u|_2 = a$.
			
			\noindent Combining Corollary \ref{Conver}, Lemma \ref{lamd}, and Poho\v{z}aev identity \eqref{Pohozaev}, we obtain
			\[
			\lambda a^2 = \lambda |u|_2^2.
			\]
			Since $\lambda>0$, it follows that
			\[
			|u|_2 = a.
			\]
			Therefore,
			\[
			u_n \to u \quad \text{in } H^{1}_{rad}(\mathbb{R}^{2}).
			\]
			Finally, by Lemma \ref{fin-mixed}, we conclude that $u$ is a normalized ground state solution of the problem \eqref{aa}.
			
		\end{proof}

		\subsection{Existence result for the critical growth}
		In this subsection, we treat the case of critical exponential growth in the
		radial space \(H^1_{\mathrm{rad}}(\mathbb{R}^2)\). We first prove three
		auxiliary lemmas, namely Lemmas~\ref{ESTMOUNTPASS-mixed},
		\ref{newlem1-mixed}, and \ref{boundd-mixed}. These lemmas provide a suitable
		upper bound for the minimax level \(\gamma(a)\), which is the key point in
		showing that the associated Palais-Smale sequence lies below the
		Trudinger-Moser critical threshold. This allows us to apply the compactness
		analysis developed in Subsection~\ref{PSa} and complete the
		proof of Theorem~\ref{T2}.
		\begin{lemma} \label{ESTMOUNTPASS-mixed}
			Assume that \ref{f3} holds. Then
			\[
			\lim_{\mu \rightarrow +\infty}\gamma(a)=0.
			\]
		\end{lemma}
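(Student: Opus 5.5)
The plan is to bound $\gamma(a)$ from above by the energy along a single explicit fibre path through a fixed radial profile. The key point is that the fixed profile does not depend on $\mu$, and assumption \ref{f3} makes the Choquard term grow like $\mu^2$. Throughout, $\Lambda$, $a$, $s$, $\alpha$ and $\tilde\sigma$ are kept fixed, and only the constant $\mu$ in \ref{f3} is sent to $+\infty$. Since $\gamma(a)>0$ was already shown after Corollary~\ref{newcor_mixed}, it suffices to prove $\limsup_{\mu\to+\infty}\gamma(a)\le 0$.

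Fix once and for all $u_0\in\mathcal S_r(a)$, for instance a normalized radial Gaussian, and set
\[
D:=\tfrac12|\nabla u_0|_2^2+\tfrac14[u_0]_s^2,
\qquad
c_0:=\int_{\mathbb R^2}(I_\alpha*|u_0|^{\tilde\sigma})|u_0|^{\tilde\sigma}\,dx>0 .
\]
Since $I_\alpha>0$ and, by \ref{f3}, $F(t)\ge\mu|t|^{\tilde\sigma}\ge0$, we get $\int(I_\alpha*F(v))F(v)\ge\mu^2\int(I_\alpha*|v|^{\tilde\sigma})|v|^{\tilde\sigma}$. Combining this with the scaling identities for $H(u,\sigma)$ (with $e^{\sigma}$ written as $t$) gives
\[
J(H(u_0,\sigma))\le g_\mu(t):=D\,(t^{2s}+t^{2})-\tfrac{\Lambda}{2}c_0\,\mu^{2}\,t^{\beta},
\qquad t=e^{\sigma},\quad \beta:=2\tilde\sigma-2-\alpha>2,
\]
where $\beta>2$ because $\tilde\sigma>2+\frac{\alpha}{2}$.

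Next I check that the path $h(\tau)=H(u_0,(1-\tau)\sigma_1+\tau\sigma_2)$, $\tau\in[0,1]$, belongs to $\Gamma$. For the endpoint in $A$, note that $|\nabla H(u_0,\sigma)|_2^2+[H(u_0,\sigma)]_s^2=e^{2\sigma}|\nabla u_0|_2^2+e^{2s\sigma}[u_0]_s^2\to0$ as $\sigma\to-\infty$. Hence, whatever $K(a)$ is (it may depend on $\mu$ through the constants in Lemma~\ref{P1_mixed}), a sufficiently negative $\sigma_1$ puts $h(0)$ in $A$. For the other endpoint, $g_\mu(t)\to-\infty$ as $t\to\infty$, so a large $\sigma_2$ gives $J(h(1))<0$. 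Therefore
\[
\gamma(a)\le\max_{t>0}g_\mu(t).
\]

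The main step is to show that $\max_{t>0}g_\mu(t)\to0$ as $\mu\to\infty$. The difficulty is the term $t^{2s}$, which dominates for small $t$ and cannot be absorbed into $t^\beta$ uniformly near $t=0$. I would split at an arbitrary $\delta\in(0,1)$.
\begin{itemize}
\item For $t\le\delta$, simply drop the negative term: $g_\mu(t)\le 2D\delta^{2s}$.
\item For $t\ge\delta$, use $t^{2s}\le\delta^{2s-2}t^{2}$, so $g_\mu(t)\le D(1+\delta^{2s-2})t^2-\frac{\Lambda}{2}c_0\mu^2t^\beta$. Maximizing this two-term function in $t$ gives a bound of the form $C_\delta\,\mu^{-4/(\beta-2)}$, which tends to $0$ as $\mu\to\infty$ because $\beta>2$.
\end{itemize}
Hence $\limsup_{\mu\to\infty}\gamma(a)\le 2D\delta^{2s}$ for every $\delta\in(0,1)$. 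Letting $\delta\to0$ and using $\gamma(a)>0$ yields $\lim_{\mu\to+\infty}\gamma(a)=0$.
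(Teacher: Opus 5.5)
Your proof is correct and follows the same route as the paper: bound $\gamma(a)$ by the energy along the fibre path $\sigma\mapsto H(u_0,\sigma)$ through a fixed $u_0\in\mathcal S_r(a)$, bound the Choquard term from below using \ref{f3}, and maximize the resulting one-variable function. Your execution is more careful than the paper's in three places: your lower bound $\mu^2c_0$, via $I_\alpha>0$ and $F\ge\mu|t|^{\tilde\sigma}\ge 0$, replaces the paper's appeal to Hardy--Littlewood--Sobolev, which only gives upper bounds; you check explicitly that the path lies in $\Gamma$ even though $K(a)$ may depend on $\mu$; and your $\delta$-splitting handles the $t^{2s}$ term, whereas the paper's claimed rate $\mu^{-2/(2\tilde\sigma-4-\alpha)}$ ignores that term, which actually dominates near $r=0$ and gives a slower (though still vanishing) rate.
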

		
		\begin{proof}
			Fix $u_0\in \mathcal{S}_r(a)$ and consider the path
			\[
			h_0(t)=H\bigl(u_0,\sigma_t)\in \Gamma,
			\qquad  \sigma_1<0,~~~\sigma_2>0,~~~~t\in[0,1],
			\]
			where
			\[
			\sigma_t:=(1-t)\sigma_1+t\sigma_2.
			\]
			By the definition of the minimax level $\gamma(a)$, we have
			\[
			\gamma(a)\le \max_{t\in[0,1]} J(h_0(t)).
			\]
			Now, using the scaling, we deduce that
			\begin{align*}
				J(h_0(t))
				&=
				\frac{e^{2\sigma_t}}{2}|\nabla u_0|_2^2
				+\frac{e^{2s\sigma_t}}{4}[u_0]_s^2 
				-\frac{\Lambda}{2e^{(2+\alpha)\sigma_t}}
				\int_{\mathbb R^2}(I_\alpha*F(e^{\sigma_t}u_0))F(e^{\sigma_t}u_0)\,dx.
			\end{align*}
			By assumption \ref{f3}, there exists
			$\tilde\sigma>2+\frac{\alpha}{2}$ such that
			\[
			F(t)\geq \mu |t|^{\tilde{\sigma}}\ge \frac{\mu}{\tilde\sigma}|t|^{\tilde\sigma}
			\qquad \text{for all } t\in\mathbb R.
			\]
			Therefore,
			\[
			F(e^{\sigma_t}u_0)\ge \frac{\mu}{\tilde\sigma}e^{\tilde\sigma\sigma_t}|u_0|^{\tilde\sigma}.
			\]
			Substituting this into the Choquard term, we get
			\begin{align*}
				&\int_{\mathbb R^2}(I_\alpha*F(e^{\sigma_t}u_0))F(e^{\sigma_t}u_0)\,dx  \ge
				\frac{\mu}{\tilde\sigma}e^{\tilde\sigma\sigma_t}
				\int_{\mathbb R^2}(I_\alpha*F(e^{\sigma_t}u_0))|u_0|^{\tilde\sigma}\,dx.
			\end{align*}
			Using  the Hardy-Littlewood-Sobolev inequality, we infer that
			\[
			\int_{\mathbb R^2}(I_\alpha*F(e^{\sigma_t}u_0))F(e^{\sigma_t}u_0)\,dx
			\ge
			\frac{C\mu}{\sigma}e^{2\tilde\sigma\sigma_t}
			|u_0|_{\frac{8\tilde\sigma}{2+\alpha}}^{4\tilde\sigma},
			\]
			for some constant $C>0$ independent of $\mu$ and $\sigma_t$.
			Consequently,
			\begin{align*}
				J(h_0(t))
				&\le
				\frac{e^{2\sigma_t}}{2}|\nabla u_0|_2^2
				+\frac{e^{2s\sigma_t}}{4}[u_0]_s^2 
				-\frac{\Lambda C\mu}{\tilde\sigma}e^{(2\tilde\sigma-2-\alpha)\sigma_t}
				|u_0|_{\frac{8\tilde\sigma}{2+\alpha}}^{4\tilde\sigma}.
			\end{align*}
			By the definition of 
			\(
			\sigma_t,
			\)
			we have
			\[
			\sigma_t\in [\min\{\sigma_1,\sigma_2\},\,\max\{\sigma_1,\sigma_2\}]
			\qquad \text{for all } t\in[0,1].
			\]
			Now we may estimate
			\begin{align*}
				\gamma(a)
				&\le \max_{t\in[0,1]}J(h_0(t)) \\
				&\le \max_{\tau\in\mathbb R}
				\left\{
				\frac{e^{2\tau}}{2}|\nabla u_0|_2^2
				+\frac{e^{2s\tau}}{4}[u_0]_s^2
				-\frac{\Lambda C\mu}{\sigma}e^{(2\tilde\sigma-2-\alpha)\tau}
				|u_0|_{\frac{8\tilde\sigma}{2+\alpha}}^{4\tilde\sigma}
				\right\}.
			\end{align*}
			Now set
			\(
			r=e^\tau>0.
			\)
			Then
			\begin{align*}
				\gamma(a)
				\le \max_{r>0}
				\left\{
				\frac{r^2}{2}|\nabla u_0|_2^2
				+\frac{r^{2s}}{4}[u_0]_s^2
				-\frac{\Lambda C\mu}{\sigma}r^{2\tilde\sigma-2-\alpha}
				|u_0|_{\frac{8\tilde\sigma}{2+\alpha}}^{4\tilde\sigma}
				\right\}.
			\end{align*}
			Since $0<s<1$ and  $\tilde\sigma>2+\frac{\alpha}{2}$.
			Thus,
			\[
			\gamma(a)\le C_1\left(\frac{1}{\mu}\right)^{\frac{2}{2\tilde\sigma-4-\alpha}}
			\]
			for some constant $C_1>0$ independent of $\mu$. In particular,
			\[
			\gamma(a)\to 0
			\qquad \text{as } \mu\to +\infty.
			\]
			This completes the proof.
		\end{proof}

		\begin{lemma}\label{newlem1-mixed}
			Assume that \ref{f3} holds. Let $\{u_{n}\}\subset \mathcal{S}_{r}(a)$ be the sequence satisfying \eqref{eq_PSS}. Then
			\[
			\limsup_{n \rightarrow +\infty}
			\int_{\mathbb{R}^2}(I_{\alpha}\ast F(u_{n}))F(u_{n})\,dx
			\leq \frac{1}{\Lambda}\left(\frac{4}{2\theta-4-\alpha}\gamma(a)\right).
			\]
		\end{lemma}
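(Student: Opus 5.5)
The plan is to combine the two limiting relations in \eqref{eq_PSS}, namely $J(u_n)\to\gamma(a)$ and $P(u_n)\to 0$, with the Ambrosetti--Rabinowitz condition \ref{f2}. The gradient and Gagliardo terms will be eliminated using only their nonnegativity, which leaves an inequality involving only the Choquard terms. We use the notation $A_n,B_n,C_n,D_n$ from the proof of Lemma \ref{bounded}. Since $F>0$ for $t\ne0$ and $I_\alpha>0$, we have $C_n\ge 0$. By \ref{f2}, $D_n\ge\theta C_n$.

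First, I would write $P(u_n)=o_n(1)$ in the form
\[
\Lambda D_n-\Lambda\Big(\tfrac{2+\alpha}{2}\Big)C_n=A_n+\tfrac{s}{2}B_n+o_n(1).
\]
Using $D_n\ge \theta C_n$, this gives
\[
\Lambda\Big(\theta-\tfrac{2+\alpha}{2}\Big)C_n\le A_n+\tfrac{s}{2}B_n+o_n(1).
\]
Next, I would bound $A_n+\frac s2 B_n$ above in terms of $\gamma(a)$ and $C_n$ via $J(u_n)=\gamma(a)+o_n(1)$, that is,
\[
A_n+\tfrac12 B_n=2\gamma(a)+\Lambda C_n+o_n(1).
\]
Since $s<1$ and $B_n\ge0$, we have $A_n+\frac s2B_n\le A_n+\frac12B_n$. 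Combining the two displays yields
\[
\Lambda\Big(\theta-\tfrac{2+\alpha}{2}-1\Big)C_n\le 2\gamma(a)+o_n(1),
\qquad\text{i.e.}\qquad
\Lambda\,\frac{2\theta-4-\alpha}{2}\,C_n\le 2\gamma(a)+o_n(1).
\]
Because $\theta>2+\frac{\alpha}{2}$, the coefficient $2\theta-4-\alpha$ is positive, so we may divide by it. Taking $\limsup$ then gives exactly
\[
\limsup_{n\to+\infty} C_n\le \frac1\Lambda\cdot\frac{4}{2\theta-4-\alpha}\gamma(a),
\]
which is the claimed bound.

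The only delicate point is the bookkeeping of the fractional term. We must make sure that the coefficient $\frac s2$ in $P$ is dominated by the coefficient $\frac12$ coming from $2J$. This holds because $0<s<1$, which is exactly what lets us discard $B_n$ with the correct sign. Hypothesis \ref{f3} is not needed for this inequality; it only enters later, when Lemma \ref{ESTMOUNTPASS-mixed} is combined with this bound to make the right-hand side small.
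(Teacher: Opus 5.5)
Your argument is correct and is essentially the paper's proof. The paper adds $(2+\alpha)J(u_n)$ to $P(u_n)$ and then substitutes $A_n$ from $2J(u_n)$, which is exactly your combination $P(u_n)-2J(u_n)$. It then uses $D_n\ge\theta C_n$ from \ref{f2} and $s<1$ to discard the $B_n$ term, as you do. Your remark that \ref{f3} plays no role here is accurate, since the paper's proof likewise uses only \ref{f2}.
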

		
		\begin{proof}
			Set
			$A_n:=|\nabla u_n|^2_2 ;$ \quad $B_n:=[u_n]^2_s;$
			and
			\[
			N_n:=\int_{\mathbb R^2}(I_\alpha*F(u_n))F(u_n)\,dx,
			\qquad
			M_n:=\int_{\mathbb R^2}(I_\alpha*F(u_n))f(u_n)u_n\,dx.
			\]
			Since $\{u_n\}$ satisfies \eqref{eq_PSS}, we have
			\[
			J(u_n)=\gamma(a)+o_n(1)
			\qquad \text{and} \qquad
			P(u_n)=o_n(1).
			\]
			Also, we may write \(J(u_n)= \gamma(a)+ {P}(u_n)\).
			From the definition of $J$, it follows that
			\[
			\frac12 A_n+\frac14 B_n-\frac\Lambda2 N_n=\gamma(a)+o_n(1),
			\]
			that is,
			\begin{equation}\label{eq1-newlem1}
				A_n+\frac12 B_n= \Lambda N_n+2\gamma(a)+o_n(1).
			\end{equation}
			On the other hand, from the Poho\v{z}aev identity \eqref{Pohozaev}, we obtain
			\[
			A_n+\frac{s}{2}B_n+\Lambda\left(\frac{2+\alpha}{2}\right)N_n- \Lambda M_n=o_n(1).
			\]
			Adding this relation to $(2+\alpha)J(u_n)=(2+\alpha)\gamma(a)+o_n(1)$, we get
			\[
			\frac{4+\alpha}{2}A_n+\frac{2+\alpha+2s}{4}B_n- \Lambda M_n
			=(2+\alpha)\gamma(a)+o_n(1).
			\]
			Using \ref{f2}, namely
			\[
			f(t)t\ge \theta F(t)\qquad \text{for all }t\in\mathbb R,
			\]
			we infer that
			\[
			M_n\ge \theta N_n.
			\]
			Hence,
			\begin{equation}\label{eq2-newlem1}
				(2+\alpha)\gamma(a)+o_n(1)
				\le \frac{4+\alpha}{2}A_n+\frac{2+\alpha+2s}{4}B_n- \Lambda \theta N_n.
			\end{equation}
			Now, by \eqref{eq1-newlem1},
			\[
			A_n= \Lambda N_n+2\gamma(a)-\frac12 B_n+o_n(1).
			\]
			Substituting this into \eqref{eq2-newlem1}, we find
			\begin{align*}
				(2+\alpha)\gamma(a)+o_n(1)
				&\le \frac{4+\alpha}{2}\left(\Lambda N_n+2\gamma(a)-\frac12 B_n\right)
				+\frac{2+\alpha+2s}{4}B_n-\theta N_n+o_n(1)\\
				&= (4+\alpha)\gamma(a)
				+\left(\frac{4+\alpha}{2}-\Lambda \theta\right)\Lambda N_n
				+\frac{2s-2}{4}B_n
				+o_n(1).
			\end{align*}
			Since $0<s<1$, we have
			\[
			\frac{2s-2}{4}B_n\le 0.
			\]
			Therefore,
			\[
			(2+\alpha)\gamma(a)+o_n(1)
			\le (4+\alpha)\gamma(a)+\left(\frac{4+\alpha}{2}-\theta\right) \Lambda N_n+o_n(1),
			\]
			which yields
			\[
			\left(\theta-\frac{4+\alpha}{2}\right)\Lambda N_n
			\le 2\gamma(a)+o_n(1).
			\]
			Since $\theta>2+\frac{\alpha}{2}$, we conclude that
			\[
			N_n\le \frac{1}{\Lambda}\left(\frac{4}{2\theta-4-\alpha}\gamma(a)\right)+o_n(1).
			\]
			Taking the limit superior as $n\to+\infty$, we obtain
			\[
			\limsup_{n\to+\infty}
			\int_{\mathbb R^2}(I_\alpha*F(u_n))F(u_n)\,dx
			\le \frac{1}{\Lambda}\left(\frac{4}{2\theta-4-\alpha}\gamma(a)\right).
			\]
			This completes the proof.
		\end{proof}
		
		\begin{lemma}\label{boundd-mixed}
			Assume that \ref{f3} holds. Let $\{u_{n}\}\subset \mathcal{S}_{r}(a)$ be the sequence satisfying \eqref{eq_PSS}. Then
			\[
			\limsup_{n \rightarrow +\infty}
			\left(|\nabla u_n|_2^2+\frac{[u_n]_s^2}{2}\right)
			\leq \left(\frac{4\theta-4-2\alpha}{2\theta-4-\alpha}\right)\gamma(a).
			\]
			Hence, there exists $\mu^*>0$ such that
			\begin{equation}\label{level-mixed}
				\limsup_{n \to +\infty}
				\left(|\nabla u_n|_2^2+\frac{[u_n]_s^2}{2}\right)
				<
				\frac{(2+\alpha)\pi}{\gamma_{0}}-a^{2},
				\quad \text{for all } \mu \geq \mu^*.
			\end{equation}
		\end{lemma}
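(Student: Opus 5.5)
The argument has two parts: a bound on the kinetic part from the energy identity, and a smallness statement for $\gamma(a)$ via Lemma~\ref{ESTMOUNTPASS-mixed}.

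\textbf{Step 1 (energy identity).} With the notation $A_n=|\nabla u_n|_2^2$, $B_n=[u_n]_s^2$, $N_n=\int_{\mathbb R^2}(I_\alpha*F(u_n))F(u_n)\,dx$ used in Lemma~\ref{newlem1-mixed}, the first relation in \eqref{eq_PSS} reads
\[
A_n+\tfrac12 B_n=\Lambda N_n+2\gamma(a)+o_n(1),
\]
which is exactly \eqref{eq1-newlem1}.

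\textbf{Step 2 (bounding $\Lambda N_n$).} I take the $\limsup$ and insert the bound of Lemma~\ref{newlem1-mixed}, $\limsup_n \Lambda N_n\le \frac{4}{2\theta-4-\alpha}\gamma(a)$. This gives
\[
\limsup_{n\to\infty}\Big(A_n+\tfrac12B_n\Big)\le\Big(2+\frac{4}{2\theta-4-\alpha}\Big)\gamma(a)=\frac{4\theta-4-2\alpha}{2\theta-4-\alpha}\,\gamma(a),
\]
which is the first assertion. The coefficient is positive and finite because $\theta>2+\frac{\alpha}{2}$ by \ref{f2}.

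\textbf{Step 3 (choice of $\mu^*$).} The hypothesis of Theorem~\ref{T2} is $a^2<\frac{(2+\alpha)\pi}{\gamma_0}$, so the quantity $\delta:=\frac{(2+\alpha)\pi}{\gamma_0}-a^2$ is strictly positive. By Lemma~\ref{ESTMOUNTPASS-mixed} (which uses \ref{f3}), $\gamma(a)\le C_1\mu^{-2/(2\tilde\sigma-4-\alpha)}\to0$ as $\mu\to+\infty$, and $C_1$ does not depend on $\mu$. I choose $\mu^*$ so that
\[
\frac{4\theta-4-2\alpha}{2\theta-4-\alpha}\,C_1(\mu^*)^{-2/(2\tilde\sigma-4-\alpha)}<\delta .
\]
The explicit decay rate is what gives \eqref{level-mixed} for every $\mu\ge\mu^*$, not only for $\mu=\mu^*$: the bound on $\gamma(a)$ is decreasing in $\mu$.

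The main obstacle is the bookkeeping of the constants in Step 3. The step only works if neither $C_1$ nor the constant in Lemma~\ref{newlem1-mixed} depends on $\mu$. For $C_1$ this holds because the comparison path in Lemma~\ref{ESTMOUNTPASS-mixed} is built from a fixed $u_0\in\mathcal S_r(a)$. For Lemma~\ref{newlem1-mixed} it holds because $\theta$ comes from \ref{f2} and so is independent of $\mu$. I treat $\mu$ as the large-parameter family, identified with $\Lambda$ as in Theorem~\ref{T2}.

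Once \eqref{level-mixed} holds, adding $|u_n|_2^2=a^2$ gives $\limsup_n\|u_n\|^2<\frac{(2+\alpha)\pi}{\gamma_0}$. Hence $\limsup_n\gamma\|u_n\|^2<(2+\alpha)\pi$ for every $\gamma>\gamma_0$ close enough to $\gamma_0$, which is the threshold required by Lemma~\ref{imp} and Corollaries~\ref{weaklim}--\ref{Conver}.
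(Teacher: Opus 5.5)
Your argument is the same as the paper's. You combine the energy identity \eqref{eq1-newlem1} with Lemma~\ref{newlem1-mixed} to get the first bound, and then use $\gamma(a)\to0$ from Lemma~\ref{ESTMOUNTPASS-mixed} to get \eqref{level-mixed}. Your explicit use of $a^2<\frac{(2+\alpha)\pi}{\gamma_0}$ and of monotonicity in $\mu$ makes precise what the paper leaves implicit.

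One small caution concerns the rate $C_1\mu^{-2/(2\tilde\sigma-4-\alpha)}$ that you quote from the proof of that lemma. The computation there does not yield it, because the $r^{2s}$ term dominates for small $r$ and gives only decay of order $\mu^{-2s/(2\tilde\sigma-2-\alpha-2s)}$. It is therefore safer to argue directly that the bound $\max_{r>0}\{\cdots\}$ is nonincreasing in $\mu$ and tends to $0$; your conclusion is unaffected.
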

		
		\begin{proof}
			Using the notations $A_n$, $B_n$, and $N_n$ introduced in the proof of Lemma \ref{newlem1-mixed}, we proceed as follows. Since $\{u_n\}$ satisfies \eqref{eq_PSS}, we have
			\[
			J(u_n)=\gamma(a)+o_n(1).
			\]
			By the definition of $J$, it follows that
			\begin{equation}\label{eq-bound1}
				A_n+\frac12 B_n= \Lambda  N_n+2\gamma(a)+o_n(1).
			\end{equation}
			Taking $\limsup$ and using Lemma \ref{newlem1-mixed}, we obtain
			\begin{align*}
				\limsup_{n\to\infty}(A_n+\frac{B_n}{2})
				&\le 2\gamma(a)
				+\Lambda \limsup_{n\to\infty} N_n\\
				&\le 2\gamma(a)
				+\frac{4}{2\theta-4-\alpha}\gamma(a).
			\end{align*}
			Therefore,
			\begin{align*}
				\limsup_{n\to\infty}(A_n+\frac{B_n}{2})
				&\le 2\gamma(a)
				+ \frac{4}{2\theta-4-\alpha}\gamma(a)\\
				&= \frac{4\theta-4-2\alpha}{2\theta-4-\alpha}\gamma(a).
			\end{align*}
			The estimate \eqref{level-mixed} follows from the fact that $\gamma(a)\to 0$ as $\mu\to+\infty$, so for $\mu$ sufficiently large,
			\[
			\frac{4\theta-4-2\alpha}{2\theta-4-\alpha}\gamma(a)
			<
			\frac{(2+\alpha)\pi}{\gamma_0}-a^2.
			\]
			This completes the proof.
		\end{proof}

		\medskip
		
		\begin{proof}[Proof of Theorem~\ref{T2}]
			By Lemma \ref{boundd-mixed}, we have
			\[
			\limsup_{n \to +\infty}
			\left(
			|\nabla u_{n}|_{2}^{2} + \frac{[u_n]_s^2}{2}
			\right)
			<
			\frac{(2+\alpha)\pi}{\gamma_{0}}-a^{2},
			\quad \text{for any } \mu \geq \mu^*,
			\]
			which implies that
			\[
			\limsup_{n \to +\infty}
			\|u_n\|^{2}
			<
			\frac{(2+\alpha)\pi}{\gamma_{0}},
			\quad \text{for any } \mu \geq \mu^*.
			\]
			Hence, for $\gamma>\gamma_{0}$ sufficiently close to $\gamma_{0}$, we deduce that
			\[
			\limsup_{n \rightarrow +\infty} \gamma \| u_n\|^{2}  < (2+\alpha)\pi.
			\]
			Following the same arguments as in subsection \ref{PFTa}, we conclude that, up to a subsequence,
			\[
			u_n \to u \quad \text{in } H^{1}_{rad}(\mathbb{R}^{2}).
			\]
			This completes the proof.
		\end{proof}
		\section{Appendix: Regularity and Poho\v{z}aev Identity}\label{regpoho}
		\noindent In order to get the existence of the normalized solution, we have constructed  the Poho\v{z}aev manifold using the following Poho\v{z}aev identity:
		\begin{theorem}\label{Pohozaev Indentity}
			Let $u\in H^1(\mathbb{R}^2)$ be a weak solution to \eqref{aa}, the it satisfies the following:
			\begin{equation}\label{Pohozaev_identity}
				\left(\frac{1-s}{2}\right)[u]_s^2+\lambda | u |_2^2 = \left(\frac{2+\alpha}{2}\right)\mathcal{A}_{F}(u),
			\end{equation}
			where $$\mathcal{A}_F(u):=\Lambda\int_{\mathbb{R}^2}(I_{\alpha}*F(u))F(u).$$
		\end{theorem}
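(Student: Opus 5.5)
We first establish enough regularity and decay to justify testing the equation against $x\cdot\nabla u$. Then we combine the resulting identity with the dilation behaviour of each term.

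\textbf{Step 1 (regularity).} Since $u\in H^1(\mathbb{R}^2)$, Proposition \ref{PRa} gives $e^{\gamma|u|^2}-1\in L^p$ for every $p\ge1$. Together with \eqref{nonb}--\eqref{nona} and the $L^\infty$ bound on $I_\alpha*F(u)$ (as in \cite[Lemma 4.1]{ACTT}), this shows that the right-hand side $g:=\Lambda(I_\alpha*F(u))f(u)-\lambda u$ lies in $L^p(\mathbb{R}^2)$ for every $p\in[2,\infty)$. We then view $-\Delta u=g-(-\Delta)^s u$. Either via a bootstrap using $(-\Delta)^s u\in H^{1-2s}$, or, more cleanly, via the Fourier multiplier $(|\xi|^2+|\xi|^{2s}+1)^{-1}$, which maps $L^p$ into $W^{2,p}$ (Mikhlin), we obtain $u\in W^{2,p}(\mathbb{R}^2)$ for all $p<\infty$. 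Hence $u\in C^{1,\beta}$, $u$ is bounded, and $x\cdot\nabla u$ has enough integrability away from infinity.

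\textbf{Step 2 (Pohožaev by scaling).} Rather than computing $\int (-\Delta)^s u\,(x\cdot\nabla u)$ by hand, we use the dilation $u_t(x)=u(tx)$. Formally,
\[
\frac{d}{dt}\Big|_{t=1}\Phi(u_t)=\langle \Phi'(u), x\cdot\nabla u\rangle
\]
for the unconstrained action $\Phi(v)=\frac12|\nabla v|_2^2+\frac14[v]_s^2+\frac{\lambda}{2}|v|_2^2-\frac{\Lambda}{2}\int(I_\alpha*F(v))F(v)$, and $\Phi'(u)=0$ by Definition \ref{def1.1}. The scalings in dimension $2$ are:
\begin{itemize}
\item $|\nabla u_t|_2^2=|\nabla u|_2^2$;
\item $[u_t]_s^2=t^{2s-2}[u]_s^2$;
\item $|u_t|_2^2=t^{-2}|u|_2^2$;
\item the Choquard term scales like $t^{-(2+\alpha)}$.
\end{itemize}
Differentiating at $t=1$ gives
\[
0=\frac{2s-2}{4}[u]_s^2-\lambda|u|_2^2+\frac{2+\alpha}{2}\mathcal{A}_F(u),
\]
which is exactly \eqref{Pohozaev_identity}.

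\textbf{Step 3 (making it rigorous).} The identity $\frac{d}{dt}\Phi(u_t)=\langle\Phi'(u),\frac{d}{dt}u_t\rangle$ requires $x\cdot\nabla u\in H^1$, which is not known a priori. We therefore truncate, testing with $\psi_R(x)\,x\cdot\nabla u$ where $\psi_R(x)=\psi(x/R)$ is a cutoff.
\begin{itemize}
\item For the local and mass terms we integrate by parts using $u\in W^{2,p}$. The error terms are supported in $R\le|x|\le2R$ and vanish as $R\to\infty$ by the $L^2$ integrability of $u$ and $\nabla u$.
\item For the Choquard term we write $f(u)\,x\cdot\nabla u=x\cdot\nabla F(u)$ and use the symmetry of the kernel. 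The quantity $\int(I_\alpha*F(u))\,x\cdot\nabla F(u)$ equals $-\frac{2+\alpha}{2}\int(I_\alpha*F(u))F(u)$, using $x\cdot\nabla_x|x-y|^{\alpha-2}+y\cdot\nabla_y|x-y|^{\alpha-2}=(\alpha-2)|x-y|^{\alpha-2}$. The truncation is removed by dominated convergence together with Proposition \ref{PRc}.
\end{itemize}

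\textbf{Main obstacle.} The main difficulty is the nonlocal term $\ll u,\psi_R\,x\cdot\nabla u\gg$. First I would avoid the truncation by working on the Fourier side: $[u]_s^2=c\int|\xi|^{2s}|\hat u|^2$, and $\widehat{x\cdot\nabla u}=-2\hat u-\xi\cdot\nabla_\xi\hat u$, so $\mathrm{Re}\int|\xi|^{2s}\hat u\,\overline{\widehat{x\cdot\nabla u}}=(s-1)\int|\xi|^{2s}|\hat u|^2$. This requires $u\in H^{1+s}$ with $|x|\,|\nabla u|$ square-integrable under the weight, and I would approximate $u$ by mollified and truncated functions $u_\varepsilon$ for which all terms are classical, then pass to the limit. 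If that fails, the fallback is the Ros-Oton–Serra type Pohožaev argument for $(-\Delta)^s$, or the $s$-harmonic extension, adapted to $\mathbb{R}^2$ with the $W^{2,p}$ regularity and decay from Step 1.
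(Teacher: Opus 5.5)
Your regularity step and your treatment of the local, mass and Choquard terms are fine, and they follow a different route from the paper. You get global $H^2\cap W^{2,p}$ regularity from the multiplier $(|\xi|^2+|\xi|^{2s}+1)^{-1}$ (applied to $g+u$) and then test directly with $\psi_R\,x\cdot\nabla u$. The paper proves only $C^{1,\delta}_{loc}$ regularity (Theorem~\ref{Holder_regularity}) and runs the difference-quotient scheme of Anthal et al. That scheme works with $D_ju=(u(\cdot+he_j)-u)/h$ and uses the pointwise positivity of $D_j(|\nabla u|^2)-2\nabla u\cdot D_j\nabla u$ and of its nonlocal analogue. It controls the nonlinear remainder by the mean value theorem and Proposition~\ref{prop_poho}.

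The gap is exactly the step you call the main obstacle: nothing in your proposal proves
\[
\lim_{R\to\infty}\ll u,\psi_R\,x\cdot\nabla u\gg=\frac{s-1}{2}[u]_s^2 .
\]
Your Fourier computation concerns $\ll u,x\cdot\nabla u\gg$. As you note yourself, this is meaningful only under a weighted bound such as $|x|\,|\nabla u|\in L^2$. Step~1 gives regularity, not decay, and the theorem is claimed for every weak solution and every $\lambda\in\mathbb{R}$, so no such bound is available. Mollifying $u$ does not help either. The equation must be tested with an admissible function, the cutoff $\psi_R$ becomes a convolution on the Fourier side, and $\psi_R\,x\cdot\nabla u$ has no reason to converge in $H^s$ as $R\to\infty$. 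Your fallbacks are not carried out: the Ros-Oton--Serra identity is a bounded-domain statement with a boundary term, and the extension route needs its own cutoff and decay analysis.

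The missing idea is the symmetrized (domain-variation) form of the nonlocal term, which needs only $[u]_s<\infty$ to pass to the limit. Start from the identity $(u(x)-u(y))\bigl(X(x)\cdot\nabla u(x)-X(y)\cdot\nabla u(y)\bigr)=\bigl(X(x)\cdot\nabla_x+X(y)\cdot\nabla_y\bigr)\tfrac12|u(x)-u(y)|^2$. Integrate by parts in $\mathbb{R}^4$, cutting out $\{|x-y|<\varepsilon\}$, whose boundary contribution is $O(\varepsilon^{2-2s})$. For every smooth compactly supported vector field $X$ this gives
\[
\ll u,X\cdot\nabla u\gg=-\frac14\int_{\mathbb{R}^2}\int_{\mathbb{R}^2}\frac{|u(x)-u(y)|^2}{|x-y|^{2+2s}}\Big[\mathrm{div}X(x)+\mathrm{div}X(y)-(2+2s)\frac{(X(x)-X(y))\cdot(x-y)}{|x-y|^2}\Big]\,dx\,dy.
\]
This holds first for $u\in C_c^\infty(\mathbb{R}^2)$, and then for $u\in H^2(\mathbb{R}^2)$ by density, since both sides are continuous in $H^2$ for fixed $X$. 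Now take $X_R(x)=\psi_R(x)x$. The bracket is bounded uniformly in $R$, because $\mathrm{div}X_R$ and the Lipschitz constant of $X_R$ are, and it converges pointwise to $2-2s$. Dominated convergence then gives the limit displayed above.

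With this lemma your argument closes, and it is more self-contained than the paper's, which defers most of the nonlocal bookkeeping to the cited works. The same symmetrization, with kernel $(x-y)\cdot(X_R(x)-X_R(y))\,|x-y|^{\alpha-4}$, is also what rigorously justifies removing the cutoff in your Choquard term, as in Moroz--Van Schaftingen.
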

		\noindent We will be following the work of \cite[Theorem~2.5]{Anthal2025Pohozaev} for the case of Choquard nonlinearity as done in \cite[Theorem~5.0.2]{Anthal2026Ground}; and for that, we need any weak solution to be H$\ddot{\text{o}}$lder continuous. Let us start with studying the regularity of the solution.
		\begin{theorem}\label{Holder_regularity}
			Let $u\in H^1(\mathbb{R}^2)$ be a weak solution of:
			$$-\Delta u +(-\Delta)^s u +\lambda u = \Lambda(I_{\alpha}*F(u))f(u) \text{ in } \mathbb{R}^2,$$
			then $u\in C^{1,\delta}_{loc}(\mathbb{R}^2)$, for all $0<\delta<1$. 
		\end{theorem}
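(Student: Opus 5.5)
The plan is a bootstrap: first show that the right-hand side $g:=\Lambda(I_{\alpha}*F(u))f(u)-\lambda u$ lies in $L^p_{loc}(\mathbb{R}^2)$ for every $p<\infty$, and then use regularity theory for the mixed operator $\mathcal{L}=-\Delta+(-\Delta)^s$.

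\textbf{Integrability of the right-hand side.} Since $u\in H^1(\mathbb{R}^2)$, Proposition \ref{PRa} gives $e^{\gamma|u|^2}-1\in L^1(\mathbb{R}^2)$ for every $\gamma>0$, hence $e^{\gamma|u|^2}-1\in L^r$ for every $r\ge1$ (using $(e^{\eta}-1)^r\le C_r(e^{r\eta}-1)$). Combining this with \eqref{snonb} or \eqref{nonb}, Hölder's inequality and $u\in L^r$ for all $r\in[2,\infty)$, we get:
\begin{itemize}
\item $f(u)\in L^r(\mathbb{R}^2)$ for all $r\in[2,\infty)$;
\item $F(u)\in L^r(\mathbb{R}^2)$ for all $r\in[1,\infty)$.
\end{itemize}
Next we show $I_\alpha*F(u)\in L^\infty(\mathbb{R}^2)$, as in \cite[Lemma 4.1]{ACTT}. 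Split the kernel into $|x-y|<1$ and $|x-y|\ge1$. On $|x-y|<1$, $|x|^{\alpha-2}\in L^{p'}$ provided $p'(2-\alpha)<2$, and $F(u)\in L^p$ for the conjugate $p$. On $|x-y|\ge1$, the kernel is bounded and $F(u)\in L^1$. Consequently $g\in L^p_{loc}(\mathbb{R}^2)$ for every $p<\infty$; in fact $g\in L^p(\mathbb{R}^2)$ for all $p\in[2,\infty)$.

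\textbf{First regularity gain.} Rewrite the equation as $-\Delta u=g-(-\Delta)^s u$ and treat the nonlocal term as a perturbation. Two routes are available.
\begin{itemize}
\item Fourier side: $(1+|\xi|^2+|\xi|^{2s})\hat u=\hat g+\hat u$, so $u=(1+|\xi|^2+|\xi|^{2s})^{-1}$ applied to $(g+u)$, which lies in $L^p$ for $p\ge2$. The symbol $m(\xi)=(1+|\xi|^2)(1+|\xi|^2+|\xi|^{2s})^{-1}$ is a Mikhlin multiplier away from the origin. Near the origin it is smooth apart from the $|\xi|^{2s}$ term, which is harmless for the Mikhlin condition since $|\partial^k m|\lesssim|\xi|^{-k}$. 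Hence $u\in W^{2,p}(\mathbb{R}^2)$ for all $p\in[2,\infty)$, and by Morrey $u\in C^{1,\delta}$ with $\delta=1-2/p$, which covers every $\delta<1$.
\item Local route: for a cutoff $\eta$ localize, using that $(-\Delta)^su\in H^{1-2s}$ and bootstrapping via $(-\Delta)^s:W^{2,p}\to W^{2-2s,p}$. Alternatively, cite the known $C^{1,\delta}$ regularity results for mixed operators with bounded right-hand side (e.g.\ the work of Biagi, Dipierro, Valdinoci, Vecchi) applied to $g\in L^\infty_{loc}$.
\end{itemize}

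\textbf{Main obstacle.} The delicate point is justifying that $g$ is locally bounded (or at least in $L^p$ for large $p$) uniformly, given the exponential term $f(u)$. For a general $H^1$ function, $f(u)$ lies in every $L^p$ but not necessarily in $L^\infty$. This is why I would route the argument through $L^p$ Calderón--Zygmund or multiplier estimates, which need only $g\in L^p$ for all finite $p$, rather than through $L^\infty$-based Schauder theory.

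Once $u\in W^{2,p}_{loc}$ for large $p$, $u$ is continuous, hence locally bounded, and $f(u)\in L^\infty_{loc}$. A second pass then gives $u\in W^{2,p}_{loc}$ for all $p$, and the $C^{1,\delta}_{loc}$ conclusion follows for every $\delta\in(0,1)$ by the Sobolev--Morrey embedding.
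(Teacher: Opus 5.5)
Your proposal is correct, but its main regularity step differs from the paper's. The integrability input is the same in both arguments. First, $I_\alpha*F(u)\in L^\infty(\mathbb{R}^2)$, obtained by splitting the Riesz kernel into near and far parts. Second, $f(u)\in L^p$ for every finite $p\ge 2$, obtained from the growth bounds together with Proposition~\ref{PRa}.

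From there the paper stays local. It splits $(-\Delta)^s=\mathcal{L}_1+\mathcal{L}_2$ into the piece with kernel restricted to $|x-y|\le B$ and the tail. It moves the tail $\mathcal{L}_2u$ to the right-hand side; the tail lies in every $L^t$ because its kernel is integrable away from the diagonal. It then invokes the $L^p$ theory of Garroni--Menaldi for second-order integro-differential operators to get $u\in W^{2,t}_{loc}$ for all $t\ge2$, and finishes with Morrey's embedding.

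You work globally on the Fourier side instead, and that route holds up. The symbol $m(\xi)=(1+|\xi|^2)/(1+|\xi|^2+c_s|\xi|^{2s})$ satisfies the Mikhlin bounds $|\partial^k m(\xi)|\le C|\xi|^{-k}$ for $k\le2$. Near the origin its derivatives are of size $|\xi|^{2s-k}$, and at infinity of size $|\xi|^{2s-2-k}$. Hence $(1-\Delta)u=m(D)(g+u)\in L^p$, so $u\in W^{2,p}(\mathbb{R}^2)$ for every $p\in[2,\infty)$.

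What your route buys:
\begin{itemize}
\item It is self-contained, using only classical multiplier and Bessel-potential theory, with no structural hypotheses of an external integro-differential theorem to check.
\item It gives a stronger conclusion: uniform $C^{1,\delta}(\mathbb{R}^2)$ bounds rather than local ones.
\end{itemize}
The price is that it relies entirely on translation invariance and constant coefficients in the whole plane. The paper's localization treats the nonlocal part as a locally lower-order perturbation, so it would adapt to variable coefficients or bounded domains.

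To make your route airtight, state explicitly how you pass to the Fourier side. Testing the weak formulation with Schwartz functions and using Plancherel (note $\Lambda(I_\alpha*F(u))f(u)\in L^2$) gives $(|\xi|^2+c_s|\xi|^{2s}+\lambda)\hat u=\widehat{\Lambda(I_\alpha*F(u))f(u)}$. Here $c_s>0$ accounts for the paper's convention $C(2,s)=1$, which changes nothing in the multiplier estimates.

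Your ``main obstacle'' paragraph, the local route and the ``second pass'' are superfluous. The multiplier step already yields every $p\in[2,\infty)$ at once, and local boundedness of $f(u)$ is never needed.
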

		\begin{proof}
			Due to the imbedding results, precisely \cite[Theorem~2.5.2]{Kesavan2019Topics}, it is very well known that $u\in L^t(\mathbb{R}^2)$ for all $t\geq 2$. Next, we claim that $(I_{\alpha}*F(u))\in L^{\infty}(\mathbb{R}^2)$.\\
			Now, for any $x\in \mathbb{R}^2$, by \eqref{snona} we have:
			\begin{eqnarray*}
				(I_{\alpha}*F(u))(x) & = & \int_{\mathbb{R}^2} \frac{A_{\alpha}F(u(y))}{|x-y|^{2-\alpha}}dy = \int_{\mathbb{R}^2} \frac{A_{\alpha}F(u(x-y))}{|y|^{2-\alpha}}dy\\
				& \leq & A_{\alpha}\eps \int_{\mathbb{R}^2}\frac{|u(x-y)|^{\tau+1}}{|y|^{2-\alpha}}dy + A_{\alpha}C_{\eps} \int_{\mathbb{R}^2}\frac{|u(x-y)|^q\left(e^{\gamma|u(x-y)|^2}-1\right)}{|y|^{2-\alpha}}dy,
			\end{eqnarray*}
			with $\tau>3$ and $q>2$.
			Denote $$I_1=\int_{\mathbb{R}^2}\frac{|u(x-y)|^{\tau+1}}{|y|^{2-\alpha}}dy; \text{ and } I_2=\int_{\mathbb{R}^2}\frac{|u(x-y)|^q\left(e^{\gamma|u(x-y)|^2}-1\right)}{|y|^{2-\alpha}}dy.$$
			For any fixed $\beta> \frac{2}{\alpha}>1$, we take $p\in \left(1,\frac{2(\beta-1)}{\beta(2-\alpha)}\right)$. Then, denoting $B_1$ to be the ball centered at origin, we have:
			\begin{equation*}
				I_1  =  \int_{\mathbb{R}^2}\frac{|u(x-y)|^{\tau+1}}{|y|^{2-\alpha}}dy=\int_{B_1}\frac{|u(x-y)|^{\tau+1}}{|y|^{2-\alpha}}dy +\int_{\mathbb{R}^2\setminus B_1}\frac{|u(x-y)|^{\tau+1}}{|y|^{2-\alpha}}dy,
			\end{equation*}
			here,
			\begin{equation*}
				\int_{\mathbb{R}^2\setminus B_1}\frac{|u(x-y)|^{\tau+1}}{|y|^{2-\alpha}}dy \leq \int_{\mathbb{R}^2\setminus B_1}|u(x-y)|^{\tau+1}dy \leq \int_{\mathbb{R}^2}|u(y)|^{\tau+1}dy<+\infty;
			\end{equation*}
			and by H$\ddot{\text{o}}$lder's inequality we have:
			\begin{eqnarray*}
				\int_{B_1}\frac{|u(x-y)|^{\tau+1}}{|y|^{2-\alpha}}dy & \leq &\left(\int_{B_1}|u(x-y)|^{\beta(\tau+1)}\right)^{\frac{1}{\beta}}\left(\int_{B_1} \frac{dy}{|y|^{(2-\alpha)\frac{\beta}{\beta-1}}}\right)^{\frac{\beta-1}{\beta}} \\
				& \le & K_1 \left( \int_{\mathbb{R}^2}|u|^{\beta(\tau+1)}\right)^{\frac{1}{\beta}}<+\infty,
			\end{eqnarray*}
			for $\beta>\frac{2}{\alpha}$. Now, using Proposition \ref{mixedTM} and H$\ddot{\text{o}}$lder's inequality, we estimate $I_2$ as follows:
			\begin{eqnarray*}
				I_2 & = & \int_{\mathbb{R}^2}\frac{|u(x-y)|^q\left(e^{\gamma|u(x-y)|^2}-1\right)}{|y|^{2-\alpha}}dy\\
				& \leq & \left(\int_{\mathbb{R}^2}|e^{\gamma|u(x-y)|^2}-1|^{\frac{p}{p-1}}\right)^{\frac{p-1}{p}} \left(\int_{\mathbb{R}^2}\frac{|u(x-y)|^{qp}}{|y|^{p(2-\alpha)}}dy\right)^{\frac{1}{p}}\\
				& = & \left(\int_{\mathbb{R}^2}|e^{\gamma|u(x)|^2}-1|^{\frac{p}{p-1}}\right)^{\frac{p-1}{p}} \left(\int_{\mathbb{R}^2}\frac{|u(x-y)|^{qp}}{|y|^{p(2-\alpha)}}dy\right)^{\frac{1}{p}} \le K_2 \left(\int_{\mathbb{R}^2}\frac{|u(x-y)|^{qp}}{|y|^{p(2-\alpha)}}dy\right)^{\frac{1}{p}}\\
				& = & K_2 \left( \int_{B_1}\frac{|u(x-y)|^{qp}}{|y|^{p(2-\alpha)}}dy+\int_{\mathbb{R}^2\setminus B_1}\frac{|u(x-y)|^{qp}}{|y|^{p(2-\alpha)}}dy\right)^{\frac{1}{p}}\\
				& \le & K_2 \left(\int_{B_1}\frac{|u(x-y)|^{pq}}{|y|^{p(2-\alpha)}}dy+\int_{\mathbb{R}^2} |u(x)|^{pq}dx \right)^{\frac{1}{p}}\\
				& \le & K_3 \left(1+ \left(\int_{B_1}\frac{|u(x-y)|^{pq}}{|y|^{p(2-\alpha)}}\right)^{\frac{1}{p}}\right)\\
				& \leq & K_3 \left(1+ \left(\int_{B_1}|u(x-y)|^{pq\beta}\right)^{\frac{1}{p\beta}}\left(\int_{B_1}\frac{dy}{|y|^{(2-\alpha)}\frac{p \beta}{\beta-1}}\right)^{\frac{(\beta-1)}{p\beta}}\right)<+\infty,
			\end{eqnarray*}
			for $p<\frac{2(\beta-1)}{\beta(2-\alpha)}$. Thus 
			$(I_{\alpha}*F(u))\in L^{\infty}(\mathbb{R}^2)$. Now, let us reformulate our problem as:
			$$-\Delta u +\mathcal{L}_1 u +\lambda u = \Lambda(I_{\alpha}*F(u))f(u)-\mathcal{L}_2u;$$
			where $$\mathcal{L}_1 u(x) = C(2,s)P.V \int_{|x-y|\leq B}\frac{u(x)-u(y)}{|x-y|^{2+2s}}dy; $$
			and,
			$$ \mathcal{L}_2u(x) =C(2,s)P.V \int_{|x-y|>B}\frac{u(x)-u(y)}{|x-y|^{2+2s}}dy,$$
			for some fixed $B>0$.
			Define $g(x)= \Lambda(I_{\alpha}*F(u))(x)f(u)(x)-\mathcal{L}_2(u)(x)$, then by Jensen's inequality, above claim, \eqref{snonb} and the fact that $u\in L^t (\mathbb{R}^2)$, for all $t\ge 2$, we get
			\begin{eqnarray*}
				\int_{\mathbb{R}^2}|g(x)|^t & \le & K_4\left(\Lambda\int_{\mathbb{R}^2}|(I_{\alpha}*F(u))f(u)|^t+\int_{\mathbb{R}^2} |\mathcal{L}_2(u)|^t\right)\\
				& \le & K_5 \left(\Lambda\int_{\mathbb{R}^2}|f(u)|^t +\int_{\mathbb{R}^2}\left(\int_{|x-y|>B}\frac{|u(x)-u(y)|^t}{|x-y|^{t(2+2s)}}dy\right)dx\right)\\
				& \le & K_6 \left(\eps\Lambda\int_{\mathbb{R}^2}|u|^{t\tau}+C_{\eps}\int_{\mathbb{R}^2}|u|^{(q-1)t}\left(e^{\gamma t |u|^2}-1\right)\right.\\
				&& \left.+2 \int_{\mathbb{R}^2}|u(x)|^t\left(\int_{|x-y|>B} \frac{dy}{|x-y|^{t(2+2s)}}\right)dx\right)\\
				& \le & K_7 \left(\eps\Lambda \int_{\mathbb{R}^2}|u(x)|^{t\tau}+C_\eps \left(\int_{\mathbb{R^2}} |u|^{2t(q-1)} \right)^{\frac{1}{2}}\left(\int_{\mathbb{R}^2}\left(e^{\gamma t |u|^2}-1\right)^2\right)^{\frac{1}{2}}\right.\\
				&& \left. +2\int_{\mathbb{R}}^2|u|^t\right)<+\infty \text{ for all } t\geq 2.
			\end{eqnarray*}
			Thus $g\in L^t(\mathbb{R}^2)$ for all $t\ge 2$. Now, by \cite[Theorem~3.1.20]{Garroni2002Second} with $\Omega_I$ being the ball of radius $B$ centered at origin and a fixed bounded domain $\Omega$, get $u\in W^{2,t}_{loc}(\mathbb{R}^2)$ for all $t\geq 2$. Further using Sobolev inequality \cite[Theorem~2.5.4]{Kesavan2019Topics}, we conclude that $u \in C^{1,\delta}_{\text{loc}}(\mathbb R^2)$, for all $\delta \in (0,1)$.
		\end{proof}
		\noindent Moreover, we will need the following result in order to get our Poho\v{z}aev identity.
		\begin{proposition}\label{prop_poho}
			Let $u$ be a solution of \eqref{aa} and $\tilde{u}$ lies between $u$ and $u(.+he_j)$ for any $h>0$ and unit vector $e_j$. Then $f(\tilde{u})\rightarrow f(u)$ in $L^2(\mathbb{R}^2)$.
		\end{proposition}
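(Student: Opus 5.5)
The statement is to be read with $h\to 0^+$: the claim is that $f(\tilde u)\to f(u)$ in $L^2(\mathbb{R}^2)$ as $h\to0$, where $\tilde u(x)$ lies between $u(x)$ and $u(x+he_j)$. I will use a Vitali-type argument, in the same spirit as the ``variant of the Lebesgue dominated convergence theorem'' used in Corollaries \ref{weaklim} and \ref{Conver}. It has two parts: pointwise convergence, and a uniform domination of $|f(\tilde u)|^2$ by a family that converges in $L^1$.

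\textbf{Pointwise convergence.} By Theorem \ref{Holder_regularity}, $u\in C^{1,\delta}_{loc}(\mathbb{R}^2)$, so $u$ is continuous. Hence $u(x+he_j)\to u(x)$ for every $x$. Since $\tilde u(x)$ lies between these two values, $\tilde u(x)\to u(x)$ for every $x$, and continuity of $f$ gives $f(\tilde u)\to f(u)$ pointwise.

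\textbf{Domination.} Set $w_h:=|u|+|u(\cdot+he_j)|$, so that $|\tilde u|\le w_h$. Since $f$ is nondecreasing and $f(0)=0$ (by \ref{f1}), the growth bound \eqref{nonb} (or \eqref{snonb}) applies. Using $(e^{r}-1)^2\le C(e^{2r}-1)$ and monotonicity in $|t|$, I get
\[
|f(\tilde u)|^2\le C\Big(w_h^{2\tau}+w_h^{2(q-1)}\big(e^{2\gamma w_h^2}-1\big)\Big)=:G_h .
\]
It then suffices to show that $G_h\to G_0:=C\big((2|u|)^{2\tau}+(2|u|)^{2(q-1)}(e^{8\gamma|u|^2}-1)\big)$ in $L^1$. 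Translations are continuous in $H^1(\mathbb{R}^2)$, so $u(\cdot+he_j)\to u$ in $H^1$, and therefore $w_h\to 2|u|$ in $H^1$ (the map $v\mapsto|v|$ is continuous on $H^1$). The polynomial term $w_h^{2\tau}$ converges in $L^1$ by the embedding $H^1\hookrightarrow L^{2\tau}$.

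\textbf{Main obstacle: the exponential term.} Here I use Hölder's inequality in the form
\[
\int |w_h^{2(q-1)}-(2|u|)^{2(q-1)}|\,(e^{2\gamma w_h^2}-1)+\int (2|u|)^{2(q-1)}\,|e^{2\gamma w_h^2}-e^{8\gamma |u|^2}|.
\]
The first integral is controlled by the $L^{2t'(q-1)}$ convergence of $w_h$ together with a uniform $L^t$ bound on $e^{2\gamma w_h^2}-1$. For the second, I use the mean value theorem,
\[
|e^{a}-e^{b}|\le |a-b|\,(e^{a}+e^{b}),
\]
together with Hölder's inequality and the same uniform bounds.

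The uniform exponential integrability is the delicate point, because Proposition \ref{mixedTM} only controls $\int(e^{\beta v^2}-1)$ when $\beta\|v\|^2<4\pi$, and $\|w_h\|$ is not small. I will handle this with the standard Cao splitting, as in the proof of Proposition \ref{PRa}, applied to a single fixed function. Write $w_h=(w_h-\phi)+\phi$, where $\phi\in C_c^\infty$ is chosen close to $2|u|$ in $H^1$. Then $\|w_h-\phi\|$ is small uniformly for $h$ small, because $w_h\to2|u|$ in $H^1$. Combining $(a+b)^2\le(1+\epsilon)a^2+C_\epsilon b^2$ with the boundedness of $\phi$, Proposition \ref{mixedTM} gives
\[
\sup_{0<h<h_0}\int\big(e^{p\gamma w_h^2}-1\big)\,dx<\infty \quad\text{for any fixed } p.
\]
With these bounds both integrals tend to zero, so $G_h\to G_0$ in $L^1$. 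The generalized dominated convergence theorem then yields $\int|f(\tilde u)-f(u)|^2\to0$.
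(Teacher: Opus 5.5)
Your argument is correct. Its skeleton matches the paper's: pointwise convergence, then a Vitali or dominated-convergence argument built on \eqref{snonb} (or \eqref{nonb}) and Trudinger--Moser. Both technical steps, however, are carried out differently.

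\textbf{The paper's route.} It applies Vitali's theorem directly. It shows $\sup_{|h|\le h_0}\int|f(\tilde u)|^{2r}\,dx<\infty$ for some $r>1$, which gives uniform integrability, and checks tightness on $\mathbb{R}^2\setminus B_R(0)$ separately. For $h$-uniform exponential control it needs no smallness at all:
\begin{enumerate}
\item it bounds $e^{c(|u|+|u(\cdot+he_j)|)^2}$ by a product $e^{c'|u|^2}e^{c'|u(\cdot+he_j)|^2}$;
\item it expands $ab-1=(a-1)(b-1)+(a-1)+(b-1)$;
\item it uses Cauchy--Schwarz and translation invariance.
\end{enumerate}
Everything then reduces to $\int(e^{\beta|u|^2}-1)\,dx$ for the single function $u$, which is finite for every $\beta$ by the first part of Proposition \ref{PRa}.

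\textbf{Your route.} You use generalized dominated convergence with majorants $G_h\to G_0$ in $L^1$, which handles tightness automatically. You get the uniform exponential bound from a Cao-type splitting and Proposition \ref{mixedTM}. This works, with two details to make explicit:
\begin{enumerate}
\item On $\operatorname{supp}\phi$ the ``$-1$'' is lost. Bound that piece by $e^{C|\phi|_\infty^2}\int_{\operatorname{supp}\phi}e^{p\gamma(1+\epsilon)(w_h-\phi)^2}\,dx$, which is finite since the support is compact.
\item In your mean value step, write $e^a+e^b=(e^a-1)+(e^b-1)+2$ so that every factor is integrable.
\end{enumerate}

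\textbf{Comparison.} Your splitting costs density, continuity of $v\mapsto|v|$ in $H^1$, and the $(1+\epsilon)$ inequality, whereas the paper's product trick gives the same bound for free. In exchange, your argument does not rely on the translation structure. You also justify pointwise convergence via Theorem \ref{Holder_regularity}, which the paper leaves implicit.

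\textbf{A shorter proof you missed.} You cite the monotonicity of $f$ but never use it, and it gives a one-line proof. Since $\tilde u(x)$ lies between $u(x)$ and $u(x+he_j)$, $f(\tilde u(x))$ lies between $f(u(x))$ and $f(u(x+he_j))$. Hence
\[
|f(\tilde u)-f(u)|\le |f(u)(\cdot+he_j)-f(u)| \quad \text{pointwise in } \mathbb{R}^2 .
\]
Moreover $f(u)\in L^2(\mathbb{R}^2)$ by \eqref{snonb}, H\"older's inequality and Proposition \ref{PRa} applied to $u$ alone. The claim then follows from continuity of translations in $L^2(\mathbb{R}^2)$. This bypasses both the uniform exponential bound and the pointwise-convergence step.
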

		\begin{proof}
			Now, since $f$ is continuous and $\tilde{u}\rightarrow u$ a.e. in $\mathbb{R}^2$ as $h\rightarrow 0$, we get:
			$$f(\tilde{u})\rightarrow f(u) \text{ a.e. in } \mathbb{R}^2.$$
			We will be using Vitali's convergence theorem in order to prove the convergence. In that context, firstly, we need uniform integrability, that is, for any $\eta>0$ we should be able to find a $\delta>0$ independent of $f(\tilde{u})$, such that:
			$$\sup_{|h|\leq h_0} \int_{E} |f(\tilde{u})|^2 <\eta \text{ whenever } |E|<\delta. $$
			Now, by \eqref{snonb}  and the fact that 
			$$|\tilde{u}(x)|\leq |u(x)|+|u(x+he_j)| \text{ for all } x\in \mathbb{R}^2,$$
			we have:
			\begin{eqnarray*}
				|f(\tilde{u})|^{2r} & \leq & C_r \eps^{2r} |\tilde{u}|^{2r\tau} +C_r'C_{\eps}^{2r}|\tilde{u}|^{2r(q-1)}\left(e^{2r\gamma|\tilde{u}|^2}-1\right)\\
				& \leq & C_r \eps^{2r} |{u}+u(.+he_j)|^{2r\tau} + C_r'C_{\eps}^{2r}|{u}+u(.+he_j)|^{2r(q-1)}\left(e^{2r\gamma|{u}|^2}e^{2r\gamma u(.+he_j)|^2}-1\right)\\
				& \leq & C_r\eps^{2r}C_1|u|^{2r\tau} +C_rC_1\eps^{2r}|u(.+he_j)|^{2r\tau}\\ && +C_r'C_{\eps}^{2r}\left(|u|^{2r(q-1)}+|u(.+he_j)|^{2r(q-1)}\right)\left(e^{2r\gamma|{u}|^2}e^{2r\gamma u(.+he_j)|^2}-1\right),
			\end{eqnarray*}
			further since $(a-1)(b-1)+a-1+b-1=ab-1$, we get:
			\begin{eqnarray*}
				|f(\tilde{u})|^{2r} & \leq & C_r\eps^{2r}C_1|u|^{2r\tau} +C_rC_1\eps^{2r}|u(.+he_j)|^{2r\tau}\\ && +C_r'C_2C_{\eps}^{2r}\left(|u|^{2r(q-1)}+|u(.+he_j)|^{2r(q-1)}\right)\left(\left(e^{2r\gamma|{u}|^2}-1\right)\left(e^{2r\gamma u(.+he_j)|^2}-1\right)\right.\\
				&& \left.+\left(e^{2r\gamma|{u}|^2}-1\right)+\left(e^{2r\gamma u(.+he_j)|^2}-1\right)\right).
			\end{eqnarray*}
			Thus, by H$\ddot{\text{o}}$lder's inequality and Proposition \ref{PRa}
			\begin{eqnarray*}
				\int_{\mathbb{R}^2} |f(\tilde{u})|^{2r} & \leq & 2C_r\eps^{2r}C_1 | u |_{2r\tau}^{2r\tau} +C_r'C_2C_{\eps}^{2r}\int_{\mathbb{R}^2}|u|^{2r(q-1)}\left(e^{2r\gamma|{u}|^2}-1\right)\left(e^{2r\gamma u(.+he_j)|^2}-1\right)\\
				&&+ C_r'C_2C_{\eps}^{2r}\int_{\mathbb{R}^2}|u(.+he_j)|^{2r(q-1)}\left(e^{2r\gamma|{u}|^2}-1\right)\left(e^{2r\gamma u(.+he_j)|^2}-1\right)\\
				&& + C_r'C_2C_{\eps}^{2r}\int_{\mathbb{R}^2}|u|^{2r(q-1)}\left(e^{2r\gamma|{u}|^2}-1\right)\\
				&& + C_r'C_2C_{\eps}^{2r}\int_{\mathbb{R}^2}|u|^{2r(q-1)}\left(e^{2r\gamma|{u(.+he_j)}|^2}-1\right)\\
				&& + C_r'C_2C_{\eps}^{2r}\int_{\mathbb{R}^2}|u(.+he_j)|^{2r(q-1)}\left(e^{2r\gamma|{u}|^2}-1\right)\\
				&& + C_r'C_2C_{\eps}^{2r}\int_{\mathbb{R}^2}|u(.+he_j)|^{2r(q-1)}\left(e^{2r\gamma|{u(.+he_j)}|^2}-1\right)\\
				& \leq & C_r'\eps^{2r}C_1 |u|_{2r\tau}^{2r\tau} + C_3C_{\eps}^2| u |_{4r(q-1)}^{2r(q-1)}\int_{\mathbb{R}^2}\left(e^{8r\gamma|u|^2}-1\right) \\
				&&+ C_4C_{\eps}^2 | u|_{4r(q-1)}^{2r(q-1)} \left(\int_{\mathbb{R}^2}\left(e^{4r\gamma|u|^2}-1\right)\right) <+\infty \text{ for all } |h|\leq h_0,
			\end{eqnarray*} 
			Hence 
			\begin{equation}\label{ebb}
				\sup_{|h|\leq h_0}
				\int_{\mathbb R^2}|f(\widetilde u)|^{2r}\,dx<+\infty.
			\end{equation}
			Let $\eta>0$.
			For any measurable set $E\subset \mathbb R^2$, by H\"older's inequality and \eqref{ebb},
			\[
			\int_E |f(\widetilde u)|^2\,dx
			\leq
			\left(\int_E |f(\widetilde u)|^{2r}\,dx\right)^{1/r}
			|E|^{(r-1)/r}
			\leq C |E|^{(r-1)/r}.
			\]
			Hence, choosing $\delta>0$ such that $C\delta^{(r-1)/r}<\eta$, we obtain
			\[
			|E|<\delta
			\quad\Longrightarrow\quad
			\sup_{|h|\leq h_0}\int_E |f(\widetilde u)|^2\,dx<\eta.
			\]
			Next, we need the tightness property, that is, for any $R>0$, we should be able to find $S_{R}\subset \mathbb{R}^2$ such that $|S_{R}|<+\infty$ and 
			\begin{equation}\label{tightness}
				\sup \int_{\mathbb{R}^2\setminus S_R}|f(\tilde{u})|^2 \rightarrow 0 \text{ as } h\rightarrow 0.
			\end{equation}
			Now, for any ball $B_{R}(0)$ of radius $R$ centered at origin, if we denote $A_R:=\mathbb{R}^2\setminus B_R(0)$, then we have
			\begin{eqnarray}\label{eq_1}
				\int_{A_{R}} |f(\tilde{u})|^2 & \leq & 2 \eps\int_{A_R}|\tilde{u}|^{2\tau} +4C_{\eps} \int_{A_{R}}|\tilde{u}|^{2(q-1)}\left(e^{2\gamma|\tilde{u}|^2}-1\right)\nonumber\\
				& \leq & 2\eps \int_{A_{R}}|\tilde{u}|^{2\tau} +C_5 \left(\int_{A_R}|\tilde{u}|^{4(q-1)}\right)^{\frac{1}{2}}\left(\int_{A_{R}}\left(e^{4\gamma|\tilde{u}|^2}-1\right)\right)^{\frac{1}{2}}.
			\end{eqnarray}
			Further, since $\tilde{u}$ lies in $L^p(\mathbb{R}^2)$ for all $p\geq 2$, by Dominated convergence theorem,
			$|\tilde{u}|^p\chi_{\mathbb{R}^2\setminus B_R(0)}\rightarrow 0$ as $R\rightarrow \infty$ in $L^1(\mathbb{R}^2)$, that is 
			\begin{equation*}
				\int_{A_R}|\tilde{u}|^p=\int_{\mathbb{R}^2}|\tilde{u}|^p\chi_{\mathbb{R}^2\setminus B_R(0)}dx \rightarrow 0 \text{ as } R\rightarrow \infty \text{ for all }p\geq 2 \text{ and } |h|\leq h_0,
			\end{equation*}
			thus,
			\begin{equation}\label{eq_2}
				\sup_{|h|\leq h_0} \int_{A_R}|\tilde{u}|^p \rightarrow 0 \text{ as } R\rightarrow \infty \text{ for all } p\geq2;
			\end{equation}
			also, as done above in the uniform integrability case, we can find $M>0$ such that 
			\begin{equation}\label{eq_3}
				\int_{A_R}\left(e^{4\gamma|\tilde{u}|^2}-1\right)\leq M \text{ for all } h.
			\end{equation}
			Therefore, using \eqref{eq_2} and \eqref{eq_3} in \eqref{eq_1} we get \eqref{tightness} with $S_R=B_R(0)$. Hence, by Vitali's convergence theorem, $f(\tilde{u})\rightarrow f(u)$ in $L^2(\mathbb{R}^2)$.
		\end{proof}
		\noindent Now, let us prove the Poho\v{z}aev identity \eqref{Pohozaev_identity}
		\begin{proof}[Proof of Theorem \ref{Pohozaev Indentity}]
			The proof is directly followed from \cite[Theorem~5.0.2]{Anthal2026Ground}, thus, we omit the repetitive calculations and highlight the main difference that occurred due to the growth of the function $f$. Considering the notations as in \cite[Theorem~5.0.2]{Anthal2026Ground} we 
			estimate $I_{3,2}+J_{2,2}$:\\
			\noindent Now, using $\displaystyle \phi_1=\sum_{i=1}^2 D_j(u)$ as test function, we get:
			\begin{equation}\label{phi_1_test}
				\int_{\mathbb{R}^2}\nabla u \nabla \phi_1 +\ll u, \phi_1\gg = \Lambda\int_{\mathbb{R}^2}(I_{\alpha}*F(u))f(u)\phi_1 -\lambda\int_{\mathbb{R}^2}u\phi_1.
			\end{equation}
			Then, as done in \cite[Theorem~2.5]{Anthal2025Pohozaev}, 
			since $$D_j(|\nabla u|^2)-2\nabla u \nabla D_j(\nabla u)\geq 0;$$
			$$D_j(|u(x)-u(y)|^2)-2(u(x)-u(y))(D_ju(x)-D_ju(y))\geq 0 \text{ for all } x, y \in \mathbb{R}^2,$$
			and $x_j\psi_R(x)\leq \frac{2}{R}$ for all $x\in \mathbb{R}^2$,
			we get:
			\begin{eqnarray*}
				&&	|I_{3,2}+J_{2,2}| =  \frac{1}{2} \left|\sum_{j=1}^2\int_{\mathbb{R}^2}\left(D_j(|\nabla u|^2)-2\nabla uD_j(\nabla u)\right)\psi_Rx_j dx \right.\\
				&& \;\;\left. +\sum_{j=1}^2\frac{1}{2}\int_{\mathbb{R}^2}\int_{\mathbb{R}^2} \frac{\left(D_j(|u(x)-u(y)|^2)-2(u(x)-u(y))(D_j u(x)-D_j u(y))\right)\psi_Rx_j}{|x-y|^{2+2s}}dxdy\right|\\
				&& \leq  \frac{1}{R} \left|\sum_{j=1}^2\int_{\mathbb{R}^2}\left(D_j(|\nabla u|^2)-2\nabla uD_j(\nabla u)\right)dx  \right.\\
				&& \;\;\left. +\sum_{j=1}^2\frac{1}{2}\int_{\mathbb{R}^2}\int_{\mathbb{R}^2} \frac{\left(D_j(|u(x)-u(y)|^2)-2(u(x)-u(y))(D_j u(x)-D_j u(y))\right)}{|x-y|^{2+2s}}dxdy\right|\\
				& &= \frac{1}{R}\sum_{j=1}^2\left(\int_{\mathbb{R}^2}\left(D_j(|\nabla u|^2)-2\nabla uD_j(\nabla u)\right)dx\right.\\
				&& \;\;\left. +\frac{1}{2}\int_{\mathbb{R}^2}\int_{\mathbb{R}^2} \frac{\left(D_j(|u(x)-u(y)|^2)-2(u(x)-u(y))(D_j u(x)-D_j u(y))\right)}{|x-y|^{2+2s}}dxdy\right)\\
				& &=  -\frac{2}{R}\left(\int_{\mathbb{R}^2}\nabla u \sum_{j=1}^2D_j(\nabla u)+\sum_{i=1}^2 \ll u, D_j(u)\gg\right)\\
				& &=  -\frac{2}{R} \left(\int_{\mathbb{R}^2}\nabla u \nabla \phi_1+\ll u,\phi_1 \gg\right)
				=  \frac{2}{R}\left(0-\int_{\mathbb{R}^2}(\Lambda(I_{\alpha}*F(u))f(u)-u)\phi_1\right).
			\end{eqnarray*}
			Now, since $\Lambda(I_{\alpha}*F(u))F(u)-u\in L^1(\mathbb{R}^2)$ we get:
			$$\int_{\mathbb{R}^2}D_j\left(\Lambda(I_{\alpha}*F(u))F(u)-\frac{|u|^2}{2}\right)dx=0,$$
			since for any integrable function $g$, we have:
			$$\int_{\mathbb{R}^2}D_j(g)(x)dx= \frac{1}{h}\int_{\mathbb{R}^2}\left(g(x+he_j)-g(x)\right)dx = \frac{1}{h}\left( \int_{\mathbb{R}^2}g(x)dx-\int_{\mathbb{R}^2}g(x)dx\right)=0;$$
			and hence, 
			\begin{eqnarray}\label{I_3,2_J_2,2}
				|I_{3,2}+J_{2,2}| & \leq & \frac{2}{R}\left(\int_{\mathbb{R}^2}\sum_{j=1}^2 D_j\left(\Lambda(I_{\alpha}*F(u))F(u)-\frac{|u|^2}{2}\right)dx-\int_{\mathbb{R}^2}(\Lambda(I_{\alpha}*F(u))f(u)-u)\phi_1\right)\nonumber\\
				& = & \frac{2}{R} \sum_{j=1}^2 \int_{\mathbb{R}^2}\left(D_j\left(\Lambda(I_{\alpha}*F(u))F(u)-\frac{|u|^2}{2}\right)-(\Lambda(I_{\alpha}*F(u))f(u)-u)D_j(u)\right)dx\nonumber\\
				& = & \frac{2}{R}\left(\sum_{j=1}^2 A^1_j - A^2_j\right),
			\end{eqnarray}
			where $$A_j^1= \Lambda\int_{\mathbb{R}^2}\left(D_j((I_{\alpha}*F(u))F(u)-(I_{\alpha}*F(u))f(u)D_j(u)\right)dx;$$
			and 
			$$A_j^2=\int_{\mathbb{R}^2}\left(D_j\left(\frac{|u|^2}{2}\right)-uD_j(u)\right)dx.$$
			Now, since $(I_{\alpha}*F(u))\in L^{\infty}(\mathbb{R}^N)$ and $F$ is convex, by using mean value theorem, we get
			\begin{eqnarray*}
				A_j^1 & = & \Lambda \int_{\mathbb{R}^2}\left(D_j((I_{\alpha}*F(u))F(u)-(I_{\alpha}*F(u))f(u)D_j(u)\right)dx\\
				& \leq & C\Lambda\int_{\mathbb{R}^2}\left(\frac{F(u)(x+he_j)-F(u)(x)}{h}-f(u)D_j(u)\right)dx\\
				& = & C\Lambda\int_{\mathbb{R}^2}\left(D_j(F(u))-f(u)D_j(u)\right)dx\\
				& = & C\Lambda\int_{\mathbb{R}^2}\left(f(\tilde{u})D_j(u)-f(u)D_j(u)\right)dx,
			\end{eqnarray*}
			where $\tilde{u}$ lies between $u$ and $u(.+he_j)$ Thus
			\begin{equation}\label{A_j_1}
				A_j^1 \le C\Lambda\int_{\mathbb{R}^2}(f(\tilde{u})-f(u))D_j(u)dx;
			\end{equation}
			also, by the mean value theorem:
			\begin{equation}\label{A_j_2}
				A_j^2 = \int_{\mathbb{R}^2}(\tilde{u}-u)D_j(u).
			\end{equation}
			Therefore, using \eqref{A_j_1} and \eqref{A_j_2} in \eqref{I_3,2_J_2,2}, we get:
			\begin{equation*}
				|I_{3,2}+J_{2,2}| \leq C_1 \left(\sum_{j=1}^2\Lambda\int_{\mathbb{R}^2}(f(\tilde{u})-f(u))D_j(u)dx + \sum_{j=1}^2\int_{\mathbb{R}^2}(\tilde{u}-u)D_j(u)dx\right).
			\end{equation*}
			Here, since $D_j u \rightarrow \frac{\partial u}{\partial x_j}$ and $\tilde{u}\rightarrow u$ as $h\rightarrow 0$, in $L^2(\mathbb{R}^N)$ we get:
			\begin{equation*}
				\sum_{j=1}^2\int_{\mathbb{R}^2}(\tilde{u}-u)D_j(u)dx  \le \sum_{j=1}^{2}\left(\int|\tilde{u}-u|^2\right)^{\frac{1}{2}}\left(\int |D_ju|^2\right)^{\frac{1}{2}} \rightarrow 0 \text{ as } h\rightarrow 0.
			\end{equation*}
			Next, by Proposition \ref{prop_poho} we have
			\begin{eqnarray*}
				\sum_{j=1}^2\int_{\mathbb{R}^2}(f(\tilde{u})-f(u))D_j(u)dx & \le & \sum_{j=1}^2 \left(\int_{\mathbb{R}^2}|f(\tilde{u})-f(u)|^2\right)^{\frac{1}{2}}\left(\int_{\mathbb{R}^2}|D_ju|^2\right)^{\frac{1}{2}}\rightarrow 0,
			\end{eqnarray*}
			as $ h\rightarrow0$. Thus, $|I_{3,2}+I_{2,2}|\rightarrow 0$. Further, following the proof of \cite[Theorem~5.0.2]{Anthal2026Ground} step by step, we get \eqref{Pohozaev_identity}.
		\end{proof}

		\section*{Declarations}
		\noindent\textbf{Ethical Approval.}
		Not applicable.
		
		\medskip
		\noindent\textbf{Competing interests.}
		The authors declare that they have no competing interests.
		
		\medskip
		\noindent\textbf{Authors contributions.}
		The authors contributed equally to this work.
		
		\medskip
		\noindent\textbf{Availability of data and materials.}
		Data sharing is not applicable to this article as no new data were created or analyzed in this study.

		\section*{Acknowledgements}
		\noindent The author, Nidhi Nidhi (PMRF ID-142685), acknowledges the support provided by the Ministry of Education, Government of India, under the Prime Minister's Research Fellows scheme.
		
		\noindent The author, Lovelesh Sharma, gratefully acknowledges the support received under the project at IIT Delhi (Project No. PD16047). 
		
		\noindent The author K. Sreenadh thanks the Department of Science and Technology (DST) India, for providing support under the Improvement of S\&T Infrastructure (FIST) programme. (Project No. SR/FST/MS-1/2019/45).

	\end{document}